\documentclass[onefignum,onetabnum]{siamonline171218}



\usepackage{lipsum}
\usepackage{amsfonts}
\usepackage{graphicx}
\usepackage{epstopdf}
\usepackage{algorithmic}
\usepackage{subcaption} 
\usepackage{dsfont} 
\ifpdf
  \DeclareGraphicsExtensions{.eps,.pdf,.png,.jpg}
\else
  \DeclareGraphicsExtensions{.eps}
\fi

\usepackage{enumitem}
\setlist[enumerate]{leftmargin=.5in}
\setlist[itemize]{leftmargin=.5in}


\newsiamremark{remark}{Remark}
\newsiamremark{hypothesis}{Hypothesis}
\newsiamremark{assumption}{Assumption}
\crefname{hypothesis}{Hypothesis}{Hypotheses}
\newsiamthm{claim}{Claim}

\headers{Error bounds for dynamical spectral estimation}{R. J. Webber, E. H. Thiede, D. Dow, A. R. Dinner, J. Weare}

\title{Error bounds for dynamical spectral estimation\thanks{Submitted to the editors May 5, 2020.\funding{RJW was supported by the National Science Foundation award DMS-1646339.
EHT was supported by the Molecular Software Sciences Institute Software Fellows program.
EHT, ARD, and JW were supported by the National Institutes of Health award R01GM109455.
ARD and JW were supported by the National Institutes of Health award R35GM136381. 
JW was supported by the Advanced Scientific Computing Research Program within the DOE Office of Science through award DE-SC0020427.
Computing resources were provided by the University of Chicago Research Computing Center.}}}

\author{Robert J. Webber\thanks{
Courant Institute of Mathematical Sciences, New York University, New York, NY 
(\email{rw2515@nyu.edu}, \email{weare@nyu.edu}).}
\and Erik H. Thiede\thanks{
Department of Chemistry, University of Chicago, Chicago, IL
(\email{thiede@uchicago.edu}, \email{dinner@uchicago.edu}).}
\and Douglas Dow\thanks{
Department of Mathematics, University of Chicago, Chicago, IL
(\email{ddow11@uchicago.edu}).}
\and Aaron R. Dinner\footnotemark[2]
\and Jonathan Weare\footnotemark[3]}

\usepackage{amsopn}
\DeclareMathOperator{\diag}{diag}
\DeclareMathOperator{\Eta}{H}
\DeclareMathOperator{\N}{\mathcal{N}}
\DeclareMathOperator{\F}{F}

\DeclareMathOperator{\Cov}{Cov}

\DeclareMathOperator*{\spa}{span}

\DeclareMathOperator{\corr}{corr}

\DeclareMathOperator{\Law}{Law}
\DeclareMathOperator{\E}{E}

\makeatletter
\newcommand*{\addFileDependency}[1]{
  \typeout{(#1)}
  \@addtofilelist{#1}
  \IfFileExists{#1}{}{\typeout{No file #1.}}
}
\makeatother

\newcommand*{\myexternaldocument}[1]{%
    \externaldocument{#1}%
    \addFileDependency{#1.tex}%
    \addFileDependency{#1.aux}%
}


\ifpdf
\hypersetup{
  pdftitle={Error bounds for dynamical spectral estimation},
  pdfauthor={R. J. Webber, E. H. Thiede, D. Dow, A. R. Dinner, J. Weare}
}
\fi


\myexternaldocument{supplement}


\begin{document}

\maketitle

\begin{abstract}
    Dynamical spectral estimation is a well-established numerical approach for estimating eigenvalues and eigenfunctions of the Markov transition operator from trajectory data. 
    Although the approach has been widely applied in biomolecular simulations,
    its error properties remain poorly understood.
    Here we analyze the error of a dynamical spectral estimation method called “the variational approach to conformational dynamics” (VAC). We bound the approximation error and estimation error for VAC estimates. Our analysis establishes VAC’s convergence properties and suggests new strategies for tuning VAC to improve accuracy.
\end{abstract}

\begin{keywords}
  transition operator, Rayleigh-Ritz method, Markov state models, computational statistical mechanics, conformation dynamics
\end{keywords}

\begin{AMS}
    65C05, 60J35, 65N30
\end{AMS}


\section{Introduction}

An essential goal in simulation studies is to identify functions
that decorrelate slowly in time. 
Since the values of these functions can be forecast far into the future,
they are used for dimensionality reduction and prediction.
Moreover, slowly decorrelating functions
describe many scientifically significant processes.
For example, in biomolecular systems,
large-scale arrangements that control
biological activity decorrelate slowly, compared to 
quickly fluctuating bond
lengths and angles.

Dynamical spectral estimation is a numerical method that identifies
slowly decorrelating functions
by estimating the
eigenfunctions and eigenvalues of the Markov transition operator of a system.
Under appropriate assumptions, a small number of eigenfunctions span the most slowly decorrelating functions of the system, and the associated eigenvalues determine the slowest decorrelation rates. 
Dynamical spectral estimation uses simulated trajectories to estimate these quantities of interest.

Despite the wide acceptance of dynamical spectral estimation,
estimated eigenfunctions and eigenvalues can have substantial error
\cite{swope2004describing},
and the cause of this error is not yet fully understood.
Our goal here is to identify and bound the major error sources, thereby identifying opportunities where dynamical spectral estimation can produce accurate results.

Dynamical spectral estimation has been used in fields as diverse as biomolecular simulation \cite{plattner2017complete}, fluid mechanics \cite{taira2017modal}, and geophysical analysis \cite{froyland2007detection}.
The approach goes by many names in the literature,
including Markov state models \cite{schutte1999direct}, time-lagged independent component analysis \cite{schwantes2013improvements},
Ulam's method \cite{ulam1960collection},
dynamical mode decomposition \cite{schmid2010dynamic},
and extended dynamical mode decomposition \cite{williams2015data}.
The methods are all closely related,
so an error analysis for any one of the methods
can shed useful light on the others.
Here, for concreteness, we focus on a dynamical spectral estimation method 
that is well-established in chemistry,
called ``the variational approach to conformational dynamics" (VAC) \cite{noe2013variational, chodera2014markov, noe2017collective, husic2018markov}.

VAC can be applied to any Markov
process $X_t$ that is ergodic and reversible with respect to a distribution $\mu$.
Starting from a data set of simulated trajectories, 
VAC is applied in two steps.
First, the data set is used to estimate expectations
$C_{ij}\left(\tau\right) = \E_{\mu}\left[\phi_i\left(X_0\right) \phi_j\left(X_{\tau}\right)\right]$
involving a set of basis functions $\left(\phi_i\right)_{1 \leq i \leq n}$.
Then, the spectral decomposition of the matrix $C\left(0\right)^{-1} C\left(\tau\right)$
is used to estimate eigenvalues and eigenfunctions of the transition operator of $X_t$.

Our mathematical analysis 
establishes bounds on VAC's approximation error
and estimation error.
\emph{Approximation error}
is the error in eigenvalue and eigenfunction estimates if the expectations
$C_{ij}\left(\tau\right) = \E_\mu\left[\phi_i\left(X_0\right) \phi_j\left(X_\tau\right)\right]$
are computed perfectly.
\emph{Estimation error}
is the additional error incurred
in VAC estimates because matrices $C\left(0\right)$ and $C\left(\tau\right)$ 
are computed imperfectly using a finite data
set.

We are not the first authors to mathematically examine VAC's error.
Djurdjevac and coauthors \cite{djurdjevac2012estimating} bounded
the approximation error of VAC eigenvalues.
We extend their work by 
bounding the approximation error for VAC eigenfunctions, which are the chief objects of interest
in most applications of dynamical spectral estimation.
Additionally, we provide the
first analysis of estimation error both for VAC eigenvalues and for eigenfunctions.

Our analysis of VAC also requires proving original error bounds.
Standard bounds
for the approximation of eigenspaces
(e.g., \cite[pg. 103]{saad2011numerical} or \cite[pg. 990]{knyazev1997new})
depend on the inverse gap
between eigenvalues.
However, the gap between any two non-trivial eigenvalues of the transition operator vanishes exponentially fast
with the lag time parameter $\tau$.
Therefore, the standard bounds increase exponentially as $\tau \rightarrow \infty$.
In contrast to this asymptotic scaling,
we contribute a sharp new perturbation bound that
depends only on the inverse \emph{relative} gap between eigenvalues.
This new bound reaches its minimal value
in the large $\tau$ limit,
demonstrating the benefit of long lag times
for reducing approximation error.
In contrast,
our asymptotic expressions for the estimation error do depend on the inverse spectral gap
and therefore grow in the large $\tau$ limit.
Therefore,
it is best to select an intermediate lag time.

While there is no single ideal lag time dictated by our analysis,
we offer new tools for tuning VAC to reduce the estimation error.
One such tool, the \emph{VAC condition number}, 
identifies the subspaces of VAC eigenfunctions most sensitive to estimation error.
A second diagnostic, the \emph{mean squared estimation error}, 
identifies the typical size of the estimation error at different lag times.
We provide data-driven formulas for calculating these quantities,
enabling VAC users to identify and avoid the most problematic subspaces and lag times.
Our experiments confirm that using these diagnostic tools leads to improved accuracy for VAC estimates.


The paper is organized as follows.
Background material is given in \cref{sec:background},
theoretical results are in
\cref{sec:main}, 
numerical experiments are in \cref{sec:experiments},
mathematical derivations are in \cref{sec:math}, and the conclusions follow in
\cref{sec:conclusions}.


\section{Background}
\label{sec:background}

This section presents background material
explaining the VAC algorithm and the dynamical quantities VAC approximates.


\subsection{VAC}{\label{sub:VAC}}

We begin by introducing the steps of VAC
when the algorithm is applied to trajectory data from a 
Markov process $X_t$ with an ergodic, reversible distribution $\mu$.
The algorithm starts by estimating expectations
involving a set of basis functions $\left(\phi_i\right)_{1 \leq i \leq n}$.
Subsequently, VAC solves an eigenvalue problem involving matrices of expectations.

\begin{algorithm}
\caption{VAC algorithm at lag time $\tau$.}
\algsetup{
linenosize=\small,
linenodelimiter=.
}
\label{alg:generalvac}
\begin{algorithmic}[1]
\STATE{Form matrix $\hat{C}\left(0\right)$
with entries $\hat{C}_{ij}\left(0\right) \approx C_{ij}\left(0\right) = \E_\mu\left[\phi_i\left(X_0\right) \phi_j\left(X_0\right)\right]$.}
\STATE{Form matrix $\hat{C}\left(\tau\right)$ with entries
$\hat{C}_{ij}\left(\tau\right) \approx C_{ij}\left(\tau\right) = \E_\mu\left[\phi_i\left(X_0\right) \phi_j\left(X_\tau\right)\right]$.}
\STATE{Solve eigenvalue problem
$\hat{\lambda}_i^{\tau} \hat{v}^i\left(\tau\right) = \hat{C}\left(0\right)^{-1} \hat{C}\left(\tau\right) \hat{v}^i\left(\tau\right)$.}
\STATE{Return VAC eigenvalues $\hat{\lambda}_i^{\tau}$ and VAC eigenfunctions $\hat{\gamma}_i^{\tau} = \sum_j \hat{v}_j^i \left(\tau\right) \phi_j$.}
\end{algorithmic}
\end{algorithm}

In \cref{alg:generalvac},
we are purposefully vague about the exact method for obtaining trajectory data to estimate
\begin{equation}
    \hat{C}_{ij}\left(\tau\right) \approx C_{ij}\left(\tau\right) = \E_\mu\left[\phi_i\left(X_0\right) \phi_j\left(X_\tau\right)\right].
\end{equation}
One common approach involves simulating long trajectories $X_t$
and removing the start of each trajectory
to limit equilibration bias \cite{sokal1997monte}.
A second common approach (``importance sampling" \cite{liu2008monte})
involves simulating short trajectories
and addressing bias through an appropriate reweighting procedure
\cite{nuske2017markov,wu2017variational}.
Since there are no restrictions on how the data set is generated, enhanced sampling techniques can
be used to generate the trajectory initial conditions or even the trajectories themselves \cite{buchete2008peptide, pan2008building}.

In addition to collecting a data set,
another key design feature affecting VAC is the choice of basis functions.
In the mid-1990s, early versions of VAC used the coordinate axes as basis functions \cite{takano1995relaxation, hirao1997molecular},
a choice that remains common in molecular dynamics simulations \cite{naritomi2011slow, schwantes2013improvements, perez2013identification}.
Independently, in the late 1990s and early 2000s, researchers began constructing spectral estimates using ``Markov state models" \cite{schutte1999direct, swope2004describing, swope2004b}, 
a procedure mathematically equivalent to performing VAC using
a basis of indicator functions on a partition of state space.
This idea of using a basis of indicator functions can be traced back to a publication by Stanislaw Ulam in 1960
\cite[pg. 74-75]{ulam1960collection}
and leads to simplifications in the eigenvalue problem in \cref{alg:generalvac}.
In the 2010s, it was observed that these schemes shared a common mathematical framework that could be extended to arbitrary basis sets \cite{noe2013variational}.
Subsequent work led to the development of new families of basis functions
\cite{nuske2014variational, vitalini2015basis, boninsegna2015investigating, nuske2016variational}.

The name ``variational approach to conformational dynamics"
is inspired by the min-max
principle for self-adjoint operators
\cite{noe2013variational, reed1978methods}.
This variational principle demonstrates that 
the top eigenfunctions $\eta_1, \ldots, \eta_k$ 
of the transition operator maximize the value of the autocorrelation function
\begin{equation}
    \rho_\eta\left(\tau\right) = \corr_{\mu}\left[\eta\left(X_0\right), \eta\left(X_\tau\right)\right]
\end{equation}
at any lag time $\tau \geq 0$.
Thus, when $\eta$ is a linear combination of the top $k$ eigenfunctions
and $u$ is uncorrelated with the top $k$ eigenfunctions, the autocorrelation functions are related by
\begin{equation}
    \rho_{\eta}\left(\tau\right) \geq \rho_u\left(\tau\right),
    \quad \tau \geq 0.
\end{equation}
Consistent with this variational principle, 
VAC constructs linear combinations of basis functions
that maximize autocorrelations.
A recent approach due to
\cite{mardt2017vampnets}
and \cite{chen2019nonlinear}
extends the linear fitting procedure in VAC
by using artificial neural networks
to maximize autocorrelations.
However, in the present analysis
we focus on the linear VAC algorithm
as described in \cref{alg:generalvac},
and we leave analysis of the nonlinear fitting procedure to future work.

To help clarify the relationship between VAC and other algorithms,
we observe that the computational steps in \cref{alg:generalvac} 
can be used for many purposes.
For example,
AMUSE
\cite{tong1991indeterminacy, molgedey1994separation}
uses the same computational procedure
as \cref{alg:generalvac},
but the goal is to solve the blind-source separation
problem in signal processing.
Likewise, dynamic mode decomposition \cite{rowley2009spectral}
and extended dynamic mode decomposition \cite{williams2015data}
use the same computational procedure as
\cref{alg:generalvac},
but the goal is to analyze non-reversible processes,
particularly deterministic fluid flows.
While the underlying computations are similar
in all these cases,
VAC refers specifically to the spectral estimation of time-reversible processes.
To learn more about the connections between VAC
and other related algorithms,
we refer the reader to the helpful review paper by Klus and coauthors \cite{klus2018data}.


\subsection{Spectral theory}
In this subsection, we take a closer look at the transition operator of the process $X_t$
and its eigenfunctions.
We assume $X_t$ is either a continuous-time Feller process \cite{kallenberg2006foundations}
or a discrete-time process restricted to even times $t = 0, 2, 4, \ldots$.
We assume $X_t$ is ergodic and time-reversible with respect to a distribution $\mu$.
We use $\left< \cdot, \cdot \right>$ to
denote the inner product on the Hilbert space $L^2\left(\mu\right)$,
and we set $\left\lVert \cdot \right\rVert = \left<\cdot, \cdot\right>^{1 \slash 2}$.
Lastly, we use $P_{\mathcal{U}}$ to denote the orthogonal projection \cite[pg. 187]{reed1975methods} onto the closed linear subspace $\mathcal{U}$
and
$P_f$ to denote the orthogonal projection onto the one-dimensional subspace spanned by the function $f$.

The transition operator \cite{kallenberg2006foundations}, also called the Koopman operator,
is defined
as the conditional expectation
operator satisfying
\begin{equation}
    T_t\left[f\right]\left(x\right) = \E\left[\left.f\left(X_t\right)\right|X_0 = x\right].
\end{equation}
There are three main properties of the transition operator
that determine information about its eigenfunctions.
\begin{enumerate}[leftmargin = *]
    \item{\label{item:property1}}
    The transition operator $T_t$ is self-adjoint in $L^2\left(\mu\right)$.
    The self-adjointness follows from the time-reversibility condition
    \begin{equation}\label{eq:reversibility}
        \mu\left(\mathop{dx}\right) p_t\left(x, \mathop{dy}\right)
        = \mu\left(\mathop{dy}\right) p_t\left(y, \mathop{dx}\right),
    \end{equation}
    where $p_t\left(x, \mathop{dy}\right)$ denotes the transition probabilities for the process $X_t$.
    By integrating over equation \eqref{eq:reversibility},
    we verify the self-adjointness property 
    \begin{equation} 
        \left<f, T_t g\right> = \left<T_t f, g\right>,
        \quad f,g \in L^2\left(\mu\right).
    \end{equation}
    
    \item{\label{item:property2}}
    The transition operator satisfies the semigroup property
    \begin{equation}
        T_{t+s} = T_t T_s.
    \end{equation}
    For discrete-time processes, the semigroup property guarantees a decomposition
    \begin{equation}
        T_t = \left(T_1\right)^t,
        \quad t = 0, 1, 2, \ldots
    \end{equation}
    For continuous-time Feller processes,
    the decomposition can be extended even further,
    leading to the formula
    \begin{equation} 
        T_t = e^{t A},
        \quad t \geq 0,
    \end{equation}
    which relates the semigroup $T_t$ to its infinitesimal generator $A$ \cite{kallenberg2006foundations}.    
    
    \item{\label{item:property3}}
    The transition operator $T_t$ is nonnegative, that is,
    \begin{equation} 
        \left<f, T_t f\right>
        = \left<T_{t \slash 2 } f, T_{t \slash 2} f\right> \geq 0,
        \quad f \in L^2\left(\mu\right),
    \end{equation}
    for all $t \geq 0$ if $X_t$ is a continuous-time process
    and for $t = 0, 2, 4, \ldots$ if $X_t$ is a discrete-time process.
\end{enumerate}

Using the spectral theorem 
for self-adjoint operators
\cite{reed1975methods},
we obtain a decomposition of either
$A$ or $T_2$.
Then, we extend this decomposition to the transition operator at all lag times $t \geq 0$ or
$t = 0, 2, 4, \ldots$.
The spectral decomposition takes the form
\begin{equation}{\label{eq:spectral}}
    T_t = \int_0^{\infty} e^{-\sigma t} \Pi\left(\mathop{d\sigma}\right),
\end{equation}
where $\Pi\left(\mathop{d\sigma}\right)$ is a projection-valued measure.

The spectral decomposition
completely determines the time correlations 
of the process $X_t$.
If the spectrum is
discrete,
then a finite set of orthonormal eigenfunctions are
responsible for all the slowest decorrelations of the process.
However,
if there is an essential spectrum  containing $\sigma = 0$,
then an infinite set of orthonormal functions decorrelate arbitrarily slowly \cite[pg. 236]{reed1975methods}.

To avoid the possibility of having an essential spectrum containing $\sigma = 0$,
it is sufficient to assume $T_t$ is compact.
Under compactness, the spectral decomposition takes the form
\begin{equation}
    T_t = \sum_{i=1}^\infty e^{-\sigma_i t} P_{\eta_i},
\end{equation}
where $e^{-\sigma_1 t} > e^{-\sigma_2 t} \geq e^{-\sigma_3 t} \geq \cdots$
are eigenvalues and
$\eta_1, \eta_2, \eta_3, \ldots$ 
are the associated eigenfunctions.
Since the process is ergodic,
$e^{-\sigma_1 t} = 1$ is a simple eigenvalue
of $T_t$
corresponding to the eigenfunction $\eta_1 = 1$.
\Cref{fig:slow_example}
shows additional examples of eigenfunctions for a compact transition operator $T_t$.

\begin{figure}[h!]
    \centering
    \includegraphics[scale = .35, clip]{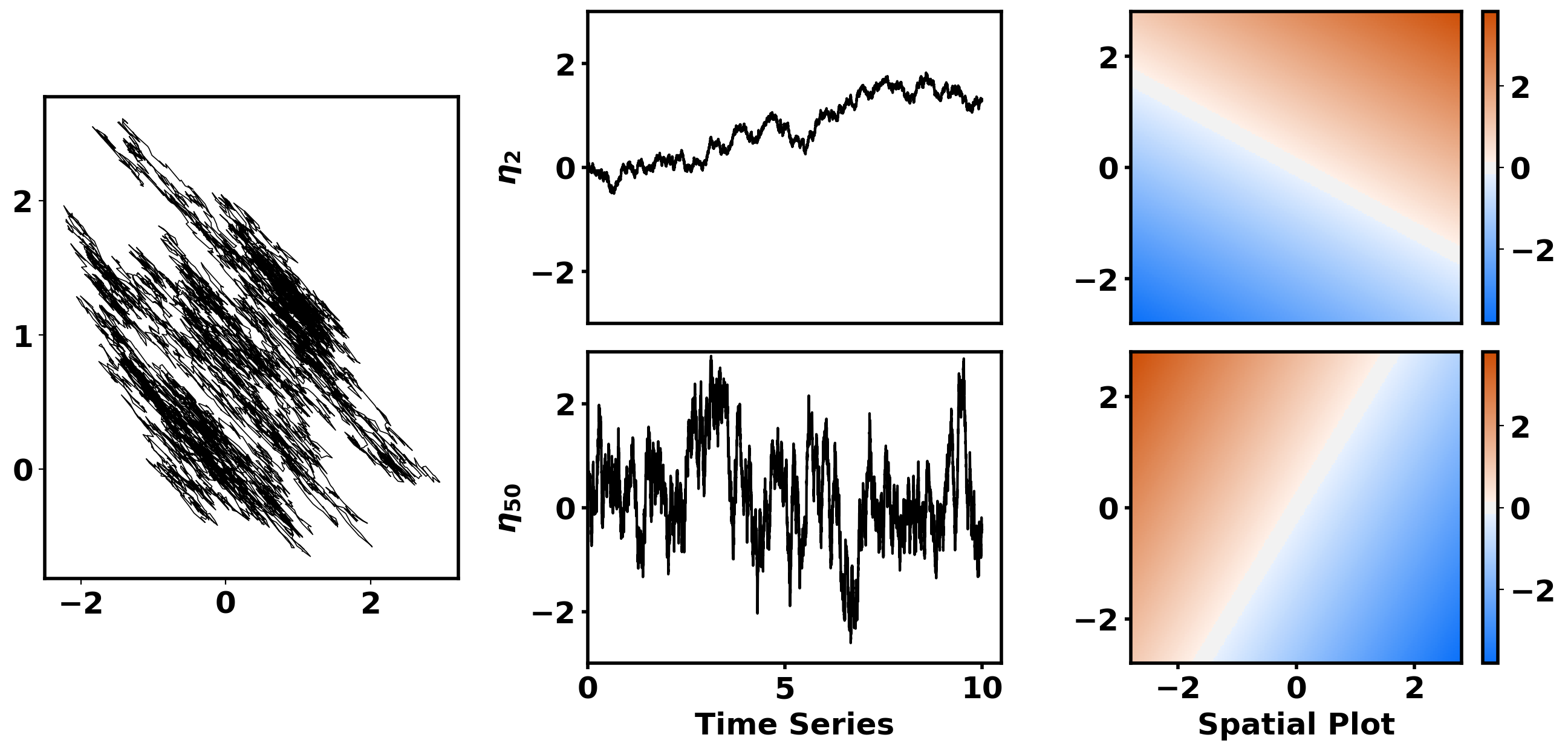}
    \captionsetup{singlelinecheck=off}
    \caption[.]{
    Eigenfunctions of a compact transition operator, corresponding to dynamics
    \begin{displaymath}
        \mathop{d}\begin{pmatrix} X_t \\ Y_t \end{pmatrix} 
    = \begin{pmatrix} -0.4 & 0.17 \\ 0.17 & -0.2 \end{pmatrix}
    \begin{pmatrix} X_t \\ Y_t \end{pmatrix}
    \mathop{dt} + \sqrt{2} \mathop{d} \begin{pmatrix} W^1_t \\ W^2_t \end{pmatrix}.
    \end{displaymath}
    Left: typical trajectory of $\left(X_t, Y_t\right)$.
    Upper middle: time series for eigenfunction $\eta_2$ with slow decorrelation timescale $\sigma_2^{-1} = 5$. 
    Lower middle: time series for eigenfunction $\eta_{50}$ with
    fast decorrelation timescale $\sigma_{50}^{-1} = 0.1$.
    Right: spatial structure of $\eta_2$ and $\eta_{50}$.}
    \label{fig:slow_example}
\end{figure}

While the compactness assumption
is enough to facilitate a rigorous analysis of VAC,
the compactness assumption can be overly restrictive.
In the Monte Carlo literature, 
there are numerous examples of transition operators that are not compact,
such as the transition operator for the Metropolis-Hastings sampler 
\cite{metropolis1953equation, atchade2007geometric}.
Therefore, we prefer to use the quasi-compactness assumption,
a weaker assumption satisfied by a broader class of processes.
\begin{assumption}[Quasi-compactness]{\label{ass:quasicompact}}
The spectral decomposition for the transition operator $T_t$ takes the form:
\begin{align}
\label{eq:modified_spectral}
    & T_t = \sum_{i=1}^r e^{-\sigma_i t} P_{\eta_i} + R_t,
    & R_t = \int_{\sigma_{r+1}}^{\infty} e^{-\sigma t} \Pi\left(\mathop{d\sigma}\right).
\end{align}
Here, $e^{-\sigma_1 t} > e^{-\sigma_2 t} \geq \cdots \geq e^{-\sigma_r t}$ are eigenvalues, $\eta_1, \eta_2, \ldots, \eta_r$ 
are the associated eigenfunctions, and $e^{-\sigma_{r+1} t}$ is not necessarily
an eigenvalue but it bounds the operator norm of the residual operator,
that is,
$\left\lVert R_t \right\rVert_2 \leq e^{-\sigma_{r+1} t}$.
\end{assumption}

\begin{remark}
In the analysis to follow, an ``eigenspace'' of $T_t$ denotes the closed
linear subspace of eigenfunctions
with a given eigenvalue.
An ``invariant subspace" $\mathcal{U}$
is any closed linear subspace
satisfying $T_t \mathcal{U} \subseteq \mathcal{U}$.
\end{remark}

\begin{remark}
There is a common modification of \cref{alg:generalvac}
where the estimated mean
$\hat{\mu}_i \approx \mu_i = \E_\mu\left[\phi_i\left(X_0\right)\right]$
is subtracted from each one of the
basis functions $\phi_i$
before performing VAC 
(see the discussion in \cite{klus2018data}).
When the mean is removed,
VAC no longer estimates the trivial eigenfunction $\eta_1 = 1$ with eigenvalue $e^{-\sigma_1 t} = 1$;
however, VAC continues to estimate all other eigenvalues and eigenspaces.
\end{remark}


\subsection{Approximation of eigenspaces}
It is colloquially said that VAC approximates eigenvalues
and eigenfunctions,
but it is more correct to say that VAC approximates eigenvalues
and \emph{eigenspaces}.
Recall that $\hat{\lambda}_i^{\tau}$ and $\hat{\gamma}_i^{\tau}$ are the VAC eigenvalues and eigenfunctions,
while $e^{-\sigma_i \tau}$ and $\eta_i$ are
the true eigenvalues and eigenfunctions of the transition operator.
We assume that VAC eigenvalues are arranged from largest to smallest so that
$\hat{\lambda}_1^{\tau} \geq \hat{\lambda}_2^{\tau} \geq \cdots \geq \hat{\lambda}_n^{\tau}$.
Then VAC approximates eigenvalues
\begin{equation}     \hat{\lambda}_i^\tau \approx e^{-\sigma_i \tau},
    \quad 1 \leq i \leq n.
\end{equation}
VAC approximates eigenspaces and other invariant subspaces
\begin{equation}
    \spa_{j \leq i \leq k} \hat{\gamma}_i^{\tau}
    \approx \spa_{j \leq i \leq k} \eta_i
\end{equation}
whenever there is a gap
between
$\left\{\sigma_j, \ldots, \sigma_k\right\}$ 
and all other $\sigma_i$ values.

To measure the error in VAC's invariant subspaces,
we introduce two distances: the gap distance 
$d_2\left(\cdot, \cdot\right)$
and the projection distance $d_{\F}\left(\cdot, \cdot\right)$ \cite{edelman1998geometry}.

\begin{definition}{\label{def:principal}}
Consider closed subspaces $\mathcal{U}$ and $\mathcal{W}$ and let $\mathcal{W}^{\perp}$ indicate the orthogonal complement of $\mathcal{W}$.
Then, we define the gap distance
and projection distance as follows:
\begin{align}
\label{eq:def}
    d_2\left(\mathcal{U}, \mathcal{W}\right) = \left\lVert P_{\mathcal{W}^{\perp}}
    P_{\mathcal{U}} \right\rVert_2,
    && d_{\F}\left(\mathcal{U}, \mathcal{W}\right) =
    \left\lVert P_{\mathcal{W}^{\perp}}
    P_{\mathcal{U}} \right\rVert_{\F}.
\end{align}
Here, $\left\lVert \cdot \right\rVert_2$
denotes the operator norm and
$\left\lVert \cdot \right\rVert_{\F}$
denotes the Hilbert-Schmidt norm,
also known as the Frobenius norm.
\end{definition}

The gap distance and projection distance are very flexible,
and definitions \eqref{eq:def}
can be applied even if
$\dim\left(\mathcal{U}\right) < \dim \left(\mathcal{W}\right) \leq \infty$.
In this case,
we observe that
$d_2\left(\mathcal{U}, \mathcal{W}\right)$
and 
$d_{\F}\left(\mathcal{U}, \mathcal{W}\right)$
are not technically distances.
Rather,
$d_2\left(\mathcal{U}, \mathcal{W}\right)$
and 
$d_{\F}\left(\mathcal{U}, \mathcal{W}\right)$
are properly interpreted as distances between
$\mathcal{U}$ and the nearest
$\dim\left(\mathcal{U}\right)$-dimensional subspace of $\mathcal{W}$.

We end this section by introducing a useful property of the projection distance,
which we apply repeatedly in the analysis.


\begin{lemma}{\label{lem:quadratic}}
Consider 
$\mathcal{U} = \spa\left(\mathcal{U}_1, \mathcal{U}_2\right)$
where $\mathcal{U}_1$ and $\mathcal{U}_2$ are orthogonal subspaces,
and $\mathcal{W} = \spa\left(\mathcal{W}_1, \mathcal{W}_2\right)$
where $\mathcal{W}_1$ and $\mathcal{W}_2$ are orthogonal subspaces.
Then,
\begin{equation}
    d_{\F}^2\left(\mathcal{U}_2, \mathcal{W}_2\right)
    \leq 
    d_{\F}^2\left(\mathcal{U}, \mathcal{W}\right) + d_{\F}^2\left(\mathcal{U}_1, \mathcal{W}_1\right).
\end{equation}
\end{lemma}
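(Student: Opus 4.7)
The plan is to split $d_{\F}^2(\mathcal{U}_2,\mathcal{W}_2)$ into two pieces controlled by $d_{\F}^2(\mathcal{U},\mathcal{W})$ and $d_{\F}^2(\mathcal{U}_1,\mathcal{W}_1)$ respectively, using only Pythagorean identities for the Hilbert--Schmidt norm together with a positive-semidefinite (PSD) comparison of projections. The starting point is that $\mathcal{W}_1 \perp \mathcal{W}_2$ together with $\mathcal{W} = \mathcal{W}_1 \oplus \mathcal{W}_2$ gives $P_{\mathcal{W}_2^\perp} = I - P_{\mathcal{W}_2} = P_{\mathcal{W}^\perp} + P_{\mathcal{W}_1}$, and the two summands have mutually orthogonal ranges. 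Applied to $P_{\mathcal{U}_2}$, and noting that the cross term $\operatorname{tr}(P_{\mathcal{U}_2}P_{\mathcal{W}^\perp}P_{\mathcal{W}_1}P_{\mathcal{U}_2})$ vanishes, this yields the Pythagorean decomposition
\begin{equation*}
d_{\F}^2(\mathcal{U}_2,\mathcal{W}_2) \;=\; \|P_{\mathcal{W}^\perp}P_{\mathcal{U}_2}\|_{\F}^2 + \|P_{\mathcal{W}_1}P_{\mathcal{U}_2}\|_{\F}^2.
\end{equation*}

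The first summand I would control by the dual Pythagorean identity on the domain side: since $\mathcal{U}_1 \perp \mathcal{U}_2$ and $P_{\mathcal{U}} = P_{\mathcal{U}_1} + P_{\mathcal{U}_2}$, the same argument gives $\|P_{\mathcal{W}^\perp}P_{\mathcal{U}}\|_{\F}^2 = \|P_{\mathcal{W}^\perp}P_{\mathcal{U}_1}\|_{\F}^2 + \|P_{\mathcal{W}^\perp}P_{\mathcal{U}_2}\|_{\F}^2$, so dropping the first nonnegative term produces the clean bound $\|P_{\mathcal{W}^\perp}P_{\mathcal{U}_2}\|_{\F}^2 \leq d_{\F}^2(\mathcal{U},\mathcal{W})$.

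The second summand is the main step. The key observation is that $\mathcal{U}_2 \subseteq \mathcal{U}_1^\perp$ forces $P_{\mathcal{U}_2} \preceq I - P_{\mathcal{U}_1}$ in the PSD order, since the difference $(I - P_{\mathcal{U}_1}) - P_{\mathcal{U}_2} = P_{(\mathcal{U}_1 \oplus \mathcal{U}_2)^\perp}$ is itself an orthogonal projection and therefore PSD. Combining this with the trace formulation of the Hilbert--Schmidt norm and cyclicity,
\begin{equation*}
\|P_{\mathcal{W}_1}P_{\mathcal{U}_2}\|_{\F}^2 = \operatorname{tr}(P_{\mathcal{W}_1}P_{\mathcal{U}_2}P_{\mathcal{W}_1}) \leq \operatorname{tr}\bigl(P_{\mathcal{W}_1}(I-P_{\mathcal{U}_1})P_{\mathcal{W}_1}\bigr) = \|P_{\mathcal{U}_1^\perp}P_{\mathcal{W}_1}\|_{\F}^2,
\end{equation*}
and taking adjoints identifies this last quantity with $\|P_{\mathcal{W}_1^\perp}P_{\mathcal{U}_1}\|_{\F}^2 = d_{\F}^2(\mathcal{U}_1,\mathcal{W}_1)$ in the natural matched-dimension setting, since both norms then collect the squared sines of the same principal angles.

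Adding the two bounds gives the claimed inequality. The main obstacle I would expect is precisely the final identification of $\|P_{\mathcal{U}_1^\perp}P_{\mathcal{W}_1}\|_{\F}$ with $d_{\F}(\mathcal{U}_1,\mathcal{W}_1)$: in the asymmetric case these squared norms can differ by $|\dim\mathcal{U}_1 - \dim\mathcal{W}_1|$, so the PSD step must be anchored to the matching-dimension regime that arises naturally when comparing fixed-size eigenspace estimates in the VAC analysis.
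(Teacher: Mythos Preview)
Your proof is correct and follows essentially the same three-line argument as the paper: the Pythagorean split via $P_{\mathcal{W}_2^\perp} = P_{\mathcal{W}^\perp} + P_{\mathcal{W}_1}$, the bound $\|P_{\mathcal{W}^\perp}P_{\mathcal{U}_2}\|_{\F}^2 \leq \|P_{\mathcal{W}^\perp}P_{\mathcal{U}}\|_{\F}^2$ from $P_{\mathcal{U}_2}\preceq P_{\mathcal{U}}$, and the comparison $P_{\mathcal{U}_2}\preceq P_{\mathcal{U}_1^\perp}$ for the second term. Your caution on the final identification $\|P_{\mathcal{U}_1^\perp}P_{\mathcal{W}_1}\|_{\F}^2 = d_{\F}^2(\mathcal{U}_1,\mathcal{W}_1)$ is well placed (and note that ``taking adjoints'' alone does not give it): the paper writes this equality without comment, tacitly using $\dim\mathcal{U}_1=\dim\mathcal{W}_1$, which indeed holds in every application of the lemma within the paper.
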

\begin{proof}
Calculate
\begin{align}
    d_{\F}^2\left(\mathcal{U}_2, \mathcal{W}_2\right)
    &= \left\lVert P_{\mathcal{U}_2}
    P_{\mathcal{W}^\perp}
    \right\rVert_{\F}^2
    + \left\lVert P_{\mathcal{U}_2}
    P_{\mathcal{W}_1}
    \right\rVert_{\F}^2 \\
    &\leq \left\lVert P_{\mathcal{U}}
    P_{\mathcal{W}^\perp}
    \right\rVert_{\F}^2
    + \left\lVert P_{\mathcal{U}_1^\perp}
    P_{\mathcal{W}_1}
    \right\rVert_{\F}^2 \\
    &= d_{\F}^2\left(\mathcal{U}, \mathcal{W}\right) + d_{\F}^2\left(\mathcal{U}_1, \mathcal{W}_1\right).
\end{align}
\end{proof}


\section{Theoretical results}
{\label{sec:main}}

To describe the approach taken in the theoretical analysis,
we introduce
an idealized VAC algorithm where
expectations
$C_{ij}\left(\tau\right) = \E_\mu\left[\phi_i\left(X_0\right) \phi_j\left(X_\tau\right)\right]$
and $C_{ij}\left(0\right) = \E_\mu\left[\phi_i\left(X_0\right) \phi_j\left(X_0\right)\right]$
are computed perfectly.
Notationally, we distinguish between VAC and idealized VAC
by using the $\hat{~}$ symbol
to indicate the quantities calculated using data.
For VAC,
we write 
$\hat{C}_{ij}\left(\tau\right)$, $\hat{v}^i\left(\tau\right)$, $\hat{\lambda}_i^{\tau}$,
and $\hat{\gamma}_i^{\tau}$.
For idealized VAC,
we write $C_{ij}\left(\tau\right)$, $v^i\left(\tau\right)$,
$\lambda_i^{\tau}$, 
and $\gamma_i^{\tau}$.



In the theoretical analysis, we use idealized VAC 
to isolate two different sources of error.
We decompose subspace error using
\begin{equation}
    \underbrace{d_{\F}\left(\spa_{j \leq i \leq k} \hat{\gamma}_i^{\tau},
    \spa_{j \leq i \leq k} \eta_i\right)}_{\text{total error}}
    \leq \underbrace{d_{\F}\left(\spa_{j \leq i \leq k} \gamma_i^{\tau},
    \spa_{j \leq i \leq k} \eta_i\right)}_{\text{approximation error}}
    + \underbrace{d_{\F}\left(\spa_{j \leq i \leq k} \hat{\gamma}_i^{\tau},
    \spa_{j \leq i \leq k} \gamma_i\right)}_{\text{estimation error}}.
\end{equation}
Analogously, we decompose eigenvalue error using
\begin{equation}
    \underbrace{\left|\hat{\lambda}_i^{\tau} - e^{-\sigma_i \tau}\right|}_{\text{total error}}
    \leq \underbrace{\left|\lambda_i^{\tau} - e^{-\sigma_i \tau}\right|}_{\text{approximation error}}
    + \underbrace{\left|\hat{\lambda}_i^{\tau} - \lambda_i^{\tau}\right|}_{\text{estimation error}}.
\end{equation}
Approximation error is the difference
between idealized VAC estimates
and the true
eigenvalues and eigenspaces.
Estimation error is the difference
between VAC estimates
and idealized VAC estimates.
We first present approximation error bounds in \cref{sub:approximation}
and then we present estimation error bounds in \cref{sub:estimation}.

\begin{remark}
To illustrate the implications of our error bounds, we use
numerical experiments.
Thus,
\Cref{fig:stabilizes} and \cref{fig:error_scaling}
demonstrate the error of VAC
when applied to the Ornstein-Uhlenbeck process
$\mathop{dX} = -X \mathop{dt} + \sqrt{2} \mathop{dW}$
using a basis of indicator functions.
Details on how the figures were generated
appear in the supplement.
\end{remark}


\subsection{Approximation error}{\label{sub:approximation}}
In this subsection,
we first bound the approximation
error by using traditional Rayleigh-Ritz approximation bounds.
However, we find that the Rayleigh-Ritz bounds 
do not provide enough information
to show how approximation error depends on the lag time parameter $\tau$.
Therefore, we derive improved bounds by
using original methods.
The improved bounds are asymptotically sharp at long lag times,
revealing that long lag times cause the approximation error to stabilize.


\subsubsection{Existing approximation bounds are inadequate}

The idealized VAC algorithm is equivalent to the \emph{Rayleigh-Ritz method} 
in spectral estimation.
In the Rayleigh-Ritz method \cite{stewart2001matrix}, 
the eigenvalues and eigenspaces of a target operator $A$ are estimated by introducing a subspace of functions $\mathcal{U}$ and then calculating the eigenvalues and eigenspaces of $P_{\mathcal{U}} \left.A\right|_{\mathcal{U}}$
where $\left.A\right|_{\mathcal{U}}$
denotes the restriction of $A$ to the subspace $\mathcal{U}$.
This is also exactly what is done in idealized VAC.
The target operator is the transition operator $T_{\tau}$,
and the subspace of basis functions is $\Phi = \spa_{1 \leq i \leq n} \phi_i$.
Moreover, the idealized VAC eigenfunctions $\gamma_i^{\tau}$
are eigenfunctions of $P_{\Phi} \left.T_{\tau}\right|_{\Phi}$
with eigenvalues $\lambda_i^{\tau}$.

The equivalence between the Rayleigh-Ritz
method and idealized VAC is known in the VAC literature
\cite{sarich2010approximation, djurdjevac2012estimating}.
However, the implications for VAC's approximation error have not yet been fully explored.
Djurdjevac and coauthors \cite{djurdjevac2012estimating} applied Rayleigh-Ritz error bounds to analyze
idealized VAC eigenvalues.
The following theorem takes a step further,
by also applying 
Rayleigh-Ritz error bounds to analyze idealized VAC eigenspaces.

\begin{theorem}[Approximation bounds]{\label{thm:knyazev}}
Fix the lag time $\tau > 0$ and the index $1 \leq k \leq r$,
but allow the basis set $\Phi$ to vary.
In the limit 
as $d_{\F}\left(\spa_{1 \leq i \leq k} \eta_i, \Phi\right) \rightarrow 0$,
the idealized VAC estimates converge as follows:
\begin{enumerate}[leftmargin = *]
\item 
The idealized VAC eigenvalues $1, 2, \ldots, k$ all converge
\begin{equation}
\label{eq:eigenvalue_convergence}
    \lambda_i^{\tau} \rightarrow e^{-\sigma_i \tau},
    \quad 1 \leq i \leq k.
\end{equation}
Moreover, the $k$th idealized VAC eigenvalue is bounded by
\begin{equation}
\label{eq:eigenvalue_bound}
    1 - d_2^2\left(\spa_{1 \leq i \leq k} \eta_i, \Phi\right)
    \leq \frac{\lambda_k^{\tau}}{e^{-\sigma_k \tau}} \leq 1.
\end{equation}
\item
When there is a gap between $\left\{\sigma_j, \ldots, \sigma_k\right\}$ and all other $\sigma_i$ values,
the subspace $\spa_{j \leq i \leq k} \gamma_i^{\tau}$ of idealized VAC eigenfunctions converges
\begin{equation}
    \spa_{j \leq i \leq k} \gamma_i^{\tau}
    \rightarrow 
    \spa_{j \leq i \leq k} \eta_i.
\end{equation}
Moreover, the top $k$ idealized VAC eigenfunctions are bounded by
\begin{equation}
\label{eq:subspace_bound}
	1 \leq \frac{d_{\F}^2\left(\spa\limits_{1 \leq i \leq k} \gamma_i^{\tau},
    \spa\limits_{1 \leq i \leq k} \eta_i \right)}
    {d_{\F}^2\left(\spa\limits_{1 \leq i \leq k} \eta_i, \Phi\right)}
    \leq 1 + \frac{\left\lVert P_{\Phi^{\perp}}
    T_{\tau} P_{\Phi} \right\rVert_2^2}
    {\left|e^{-\sigma_k \tau} - \lambda_{k+1}^{\tau}\right|^2}.
\end{equation}
\end{enumerate}
\end{theorem}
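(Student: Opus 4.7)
The plan is to recognize idealized VAC as the Rayleigh-Ritz method applied to the self-adjoint, positive semi-definite transition operator $T_\tau$ using the trial subspace $\Phi = \spa_{1 \leq i \leq n}\phi_i$, so that the $\lambda_i^\tau$ and $\gamma_i^\tau$ are the eigenvalues and eigenfunctions of the compressed operator $P_\Phi T_\tau\big|_\Phi$. Both parts of the theorem then reduce to classical ingredients: the Courant-Fischer min-max principle for the eigenvalue inequalities, and a Kato-Davis-Kahan/Knyazev perturbation estimate for the eigenspaces.

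For part (i), the upper bound $\lambda_k^\tau \leq e^{-\sigma_k\tau}$ is the direct min-max inequality, since the Rayleigh quotient of $T_\tau$ is maximized over strictly fewer $k$-dimensional subspaces when one restricts to $\Phi$. For the matching lower bound, I would insert the trial subspace $\mathcal V = P_\Phi \mathcal E_k$, where $\mathcal E_k = \spa_{1 \leq i \leq k}\eta_i$, into the min-max formula; provided $d_2(\mathcal E_k, \Phi) < 1$ this has full dimension $k$. Writing a generic $f \in \mathcal V$ as $f = P_\Phi g$ with $g = \sum_{i \leq k} c_i \eta_i$, the Rayleigh quotient $\langle f, T_\tau f\rangle/\|f\|^2$ should then be bounded below by $e^{-\sigma_k\tau}(1 - d_2^2(\mathcal E_k, \Phi))$ by exploiting the PSD factorization $T_\tau = T_{\tau/2}^2$ together with $\|T_{\tau/2}\|_2 \leq 1$ (since $\sigma_1 = 0$) and $\|T_{\tau/2} g\| \geq e^{-\sigma_k\tau/2}\|g\|$ for $g \in \mathcal E_k$. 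Convergence $\lambda_i^\tau \to e^{-\sigma_i\tau}$ for each $1 \leq i \leq k$ then follows by sandwiching the automatic upper bound against the analogous lower bound applied to each of the nested subspaces $\mathcal E_1 \subseteq \cdots \subseteq \mathcal E_k$ in turn.

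For part (ii), the lower bound in \eqref{eq:subspace_bound} comes essentially for free: since $\spa_{1 \leq i \leq k}\gamma_i^\tau \subseteq \Phi$, the orthogonal projector onto $(\spa\gamma_i^\tau)^\perp$ splits as $P_{\Phi^\perp}$ plus a piece supported in $\Phi$, and a Pythagorean identity for Hilbert-Schmidt norms (in the spirit of \cref{lem:quadratic}) yields $d_\F^2(\spa\eta_i, \spa\gamma_i^\tau) \geq d_\F^2(\spa\eta_i, \Phi)$. The upper bound is the substantive half: decompose $P_{\spa\eta_i} = P_\Phi P_{\spa\eta_i} + P_{\Phi^\perp} P_{\spa\eta_i}$, so that the $\Phi^\perp$-part contributes exactly $d_\F^2(\spa\eta_i, \Phi)$ to $d_\F^2(\spa\eta_i, \spa\gamma_i^\tau)$. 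The in-$\Phi$ part is controlled by expanding $P_\Phi \eta_i$ in the VAC eigenbasis; combining $P_\Phi T_\tau P_\Phi \gamma_j^\tau = \lambda_j^\tau \gamma_j^\tau$ with $T_\tau \eta_i = e^{-\sigma_i\tau}\eta_i$ yields the Sylvester-type identity
\begin{equation}
    \bigl(e^{-\sigma_i\tau} - \lambda_j^\tau\bigr)\langle \gamma_j^\tau, P_\Phi \eta_i\rangle = \langle \gamma_j^\tau, P_\Phi T_\tau P_{\Phi^\perp}\eta_i\rangle.
\end{equation}
For $j > k$ and $i \leq k$ the denominator is at least the spectral gap $|e^{-\sigma_k\tau} - \lambda_{k+1}^\tau|$, and summing the squared numerators over $i \leq k$ gives $\|P_{\Phi^\perp} T_\tau P_\Phi\|_2^2 \cdot d_\F^2(\spa\eta_i, \Phi)$ after using self-adjointness of $T_\tau$ and the operator-norm bound $\|P_\Phi T_\tau P_{\Phi^\perp}\eta_i\| \leq \|P_{\Phi^\perp}T_\tau P_\Phi\|_2 \|P_{\Phi^\perp}\eta_i\|$. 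Adding the $\Phi^\perp$- and in-$\Phi$-contributions reproduces the ratio in \eqref{eq:subspace_bound}. Convergence of the middle block $\spa_{j \leq i \leq k}\gamma_i^\tau$ then follows by combining the eigenvalue convergence from part (i) with the assumed gap and rerunning the same perturbation estimate, applied separately to the top $k$ and top $j-1$ Rayleigh-Ritz eigenspaces.

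The main obstacle will be pinning down the sharp multiplicative factor $e^{-\sigma_k\tau}(1 - d_2^2)$ in the eigenvalue lower bound rather than the routine additive estimate $e^{-\sigma_k\tau} - d_2^2$: a naive triangle inequality on $\|T_{\tau/2} P_\Phi g\|$ drops a factor of $e^{-\sigma_k\tau/2}$, so obtaining the sharp constant requires exploiting the PSD structure of $T_\tau$ precisely in the direction $g \in \mathcal E_k$ selected by the trial subspace.
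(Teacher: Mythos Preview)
Your framework matches the paper's: Rayleigh--Ritz identification, min--max for \eqref{eq:eigenvalue_bound}, and a Davis--Kahan residual estimate for \eqref{eq:subspace_bound}. Your Sylvester identity $(e^{-\sigma_i\tau}-\lambda_j^\tau)\langle\gamma_j^\tau,\eta_i\rangle = \langle\gamma_j^\tau, P_\Phi T_\tau P_{\Phi^\perp}\eta_i\rangle$ is the coordinate form of the paper's Davis--Kahan step, and your Pythagorean split of $(\spa_{1\le i\le k}\gamma_i^\tau)^\perp$ into $\Phi^\perp \oplus \spa_{k+1\le i\le n}\gamma_i^\tau$ is exactly what the paper does; part~(ii) is correct as outlined.

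The genuine gap is the one you yourself flag: your parametrization $f = P_\Phi g$ with $g\in\mathcal E_k$, combined with the square-root factorization $T_\tau = T_{\tau/2}^2$, does not deliver the sharp multiplicative lower bound in \eqref{eq:eigenvalue_bound}, and there is no obvious way to close it along that route. The paper never uses $T_{\tau/2}$. Instead it reverses the decomposition: take a unit vector $q$ in the trial subspace $Q_{1:k}=P_\Phi\mathcal E_k$ and split it along the $T_\tau$-\emph{invariant} pair $\mathcal E_k$, $\mathcal E_k^\perp$. Invariance kills the cross terms in $\langle q, T_\tau q\rangle$, positivity of $T_\tau$ lets one discard the $\mathcal E_k^\perp$ piece, and on $\mathcal E_k$ the Rayleigh quotient is at least $e^{-\sigma_k\tau}$, giving
\[
\langle q, T_\tau q\rangle \;\ge\; e^{-\sigma_k\tau}\,\|P_{\mathcal E_k}q\|^2 \;\ge\; e^{-\sigma_k\tau}\bigl(1 - d_2^2(\mathcal E_k,\Phi)\bigr).
\]
The last inequality uses that $Q_{1:k}$ and $\mathcal E_k$ have the same dimension, so $\|P_{\mathcal E_k^\perp}P_{Q_{1:k}}\|_2 = \|P_{Q_{1:k}^\perp}P_{\mathcal E_k}\|_2 = \|P_{\Phi^\perp}P_{\mathcal E_k}\|_2$. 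The point is that splitting $q$ along the true eigenspace, where $T_\tau$ acts diagonally, rather than splitting $g$ along $\Phi$, where $T_\tau$ does not, is what buys the sharp constant without any loss.
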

\begin{proof}
See \cite{knyazev1985sharp, knyazev1997new} for the original proofs,
or see the derivations in the supplement.
\end{proof}

The main takeaway from
\cref{thm:knyazev}
is that the approximation error converges to zero
in the limit as
\begin{equation}
\label{eq:projection_condition}
    d_{\F}\left(\spa_{1 \leq i \leq k} \eta_i, \Phi\right) \rightarrow 0.
\end{equation}
Condition \eqref{eq:projection_condition} implies that the basis set $\Phi$ must
become very rich, 
so that eigenfunctions $\eta_i$ can be closely approximated
using linear combinations of basis functions.

The Rayleigh-Ritz error bound \eqref{eq:subspace_bound} 
clearly indicates that the eigenspace approximation
error must decay with an increasingly rich basis.
However, the bound is not sufficiently detailed
to show how approximation error
depends on the lag time $\tau$.
As seen in \cref{fig:stabilizes},
the Rayleigh-Ritz bound \eqref{eq:subspace_bound}
asymptotes to infinity as the lag time increases,
implying that approximation error can grow arbitrarily large.
In contrast to this upper bound, however, experiments
reveal that approximation error decreases and then stabilizes as the lag time tends to infinity.
In the next section, we will derive an improved bound that is asymptotically sharp,
describing the exact behavior of the approximation error as $\tau \rightarrow \infty$.

\begin{figure}[h!]
    \centering
    \includegraphics[scale = .4, clip]{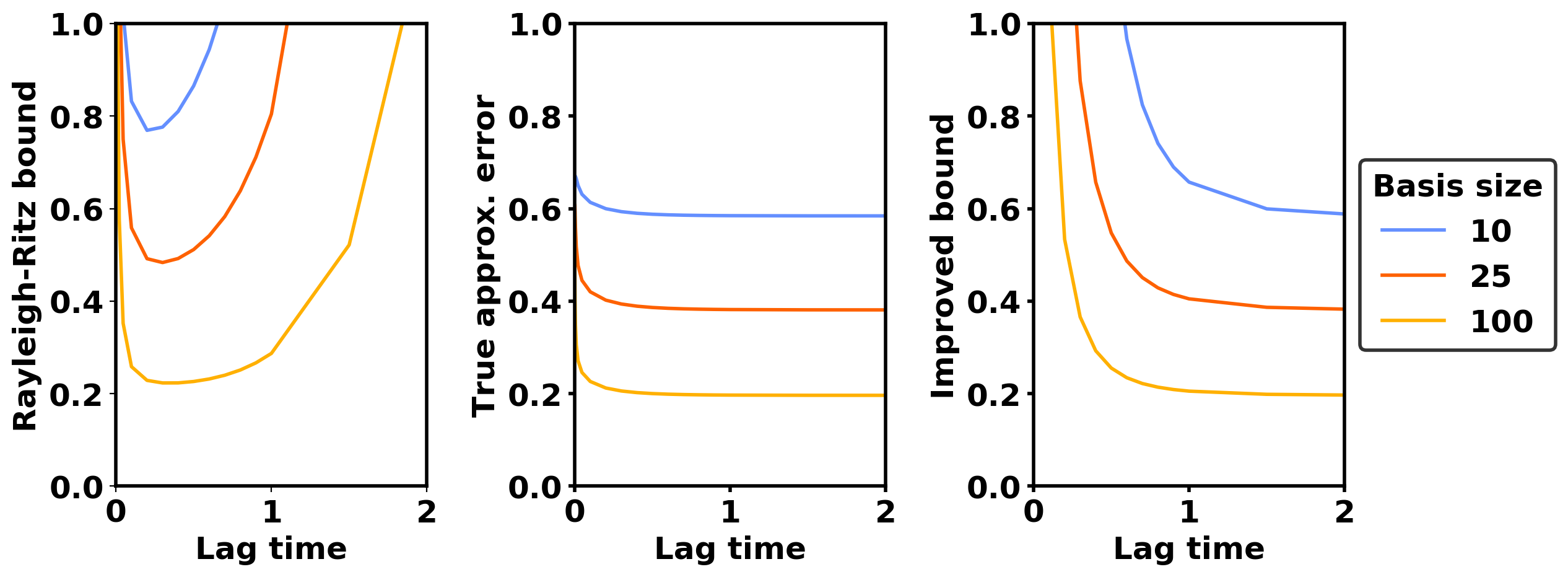}
    \captionsetup{singlelinecheck=off}
    \caption[.]{Comparison between different bounds for the the approximation error.
    Left: the Rayleigh-Ritz bound
    asymptotes to infinity at long and short lag times.
    Center: the true approximation error 
    stabilizes at long lag times.
    Right: the improved bound presented in \cref{thm:remarkable} is asymptotically sharp at long lag times.}
    \label{fig:stabilizes}
\end{figure}


\subsubsection{New approximation bounds}{\label{subsub:new}}
To analyze the dependence on lag time,
we develop a mathematical approach different from the methods applied to the Rayleigh-Ritz method in the past.
We start by identifying a key stability property of idealized VAC
that has not appeared in the previous literature.
As $\tau \rightarrow \infty$, 
idealized VAC eigenspaces converge
to a well-defined limit,
implying that the approximation error
must stabilize at long lag times.

To rigorously study the convergence of idealized VAC estimates,
our first step is to introduce the orthogonalized projection functions 
$q_1, q_2, \ldots$.
These are the natural functions to appear in the $\tau \rightarrow \infty$ limit.
They are constructed from the projected eigenfunctions $P_{\Phi} \eta_1, P_{\Phi} \eta_2, \ldots,$
but they are adjusted
to meet the orthogonality constraints on idealized VAC eigenfunctions.

\begin{definition}{\label{def:orthogonal}}
Set $p = \min\left\{n, r\right\}$,
where $n$ is the number of basis functions $\left(\phi_i\right)_{1 \leq i \leq n}$
and $r$ is the number of eigenfunctions $\left(\eta_i\right)_{1 \leq i \leq r}$ identified in \cref{ass:quasicompact}.
Assume that projections
$P_{\Phi} \eta_i$ are linearly independent for $1 \leq i \leq p$.
Then, define:
\begin{align}
\label{eq:gs_start}
    & \tilde{q}_1 = P_{\Phi} \eta_1,
    & q_1 = \tilde{q}_1 \slash \left\lVert \tilde{q}_1 \right\rVert, \\
    & \tilde{q}_2 = P_{\Phi} \eta_2 - \left<q_1, \eta_2\right> q_1,
    & q_2 = \tilde{q}_2 \slash \left\lVert \tilde{q}_2 \right\rVert, \\
    & \vdots
    & \\
    \label{eq:gs_end}
    & \tilde{q}_p = P_{\Phi} \eta_p 
    - \sum_{i=1}^{p-1} \left<q_i, \eta_p\right> q_i,
    & q_p = \tilde{q}_p \slash \left\lVert \tilde{q}_p \right\rVert.
\end{align}
\end{definition}

Our next step is to prove that idealized VAC eigenfunctions $\gamma_i^\tau$
converge to the orthogonalized projections $q_i$ at long lag times.

\begin{theorem}[The $\tau \rightarrow \infty$ limit]
\label{thm:remarkable}
Fix the basis set $\Phi$ but allow the lag time $\tau$ to vary.
In the limit 
as $\tau \rightarrow \infty$,
idealized VAC estimates converge as follows:
\begin{enumerate}[leftmargin = *]
\item
When there is a gap between $\sigma_k$ and all other $\sigma_i$ values,
the $k$th idealized VAC eigenvalue satisfies
\begin{equation}
\label{eq:refined_eigenvalue}
    \frac{\lambda_k^{\tau}}{e^{-\sigma_k \tau}}
    \rightarrow \left<\eta_k, q_k\right>^2.
\end{equation}
\item{\label{item:remarkable_subspaces}}
When there is a gap between $\left\{\sigma_j, \ldots, \sigma_k\right\}$ and all other $\sigma_i$ values,
the subspace $\spa_{j \leq i \leq k} \gamma_i^{\tau}$ of idealized VAC eigenfunctions converges
\begin{equation}
\label{eq:remarkable_subspaces}
\spa_{j \leq i \leq k} \gamma_i^{\tau}
\rightarrow \spa_{j \leq i \leq k} q_i.
\end{equation}
\item 
When there is a gap between $\sigma_k$ and 
all other $\sigma_i$ values
and a gap between $\sigma_{k+1}$ and all other $\sigma_i$ values,
the top $k$ idealized VAC eigenfunctions satisfy
\begin{equation}
\label{eq:equivalent}
    d_{\F}\left(\spa_{1 \leq i \leq k} \gamma_i^{\tau},
    \spa_{1 \leq i \leq k} q_i\right)\frac{\lambda_k^\tau}
    {\lambda_{k+1}^{\tau}}
    \rightarrow
    \left|\frac{\left<\eta_{k+1}, q_k\right>}
    {\left<\eta_{k+1}, q_{k+1}\right>}\right|.
\end{equation}
\end{enumerate}
\end{theorem}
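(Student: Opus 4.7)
The plan is to analyze $P_\Phi T_\tau P_\Phi$ in a carefully chosen orthonormal basis of $\Phi$, in which its matrix is almost upper-triangular plus an exponentially small residual, and then read off the eigenvalues and eigenspaces directly. Extend the orthonormal functions $q_1,\ldots,q_p$ of \cref{def:orthogonal} arbitrarily to an orthonormal basis $q_1,\ldots,q_n$ of $\Phi$. Using \cref{ass:quasicompact}, a direct computation shows that in the $q$-basis
\begin{equation*}
P_\Phi T_\tau P_\Phi \;=\; R\, E(\tau)\, R^{\top} + S(\tau),
\end{equation*}
where $E(\tau) = \diag\left(e^{-\sigma_1\tau},\ldots,e^{-\sigma_r\tau}\right)$, the $n\times r$ matrix $R$ has entries $R_{ij} = \left<q_i,\eta_j\right>$, and $\left\lVert S(\tau)\right\rVert_2 \le e^{-\sigma_{r+1}\tau}$. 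Because the $q_i$ come from Gram-Schmidt on $P_\Phi \eta_1, P_\Phi \eta_2,\ldots$, the leading $p\times p$ block of $R$ is upper-triangular, and in fact $R_{jj} = \left\lVert\tilde q_j\right\rVert = \left<\eta_j, q_j\right>$.

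Parts 1 and 2 then follow from an asymptotic analysis of $M(\tau):=RE(\tau)R^{\top}$. The diagonal entries expand as $M_{ii}(\tau) = R_{ii}^2\, e^{-\sigma_i\tau} + \sum_{l>i} R_{il}^2\, e^{-\sigma_l\tau}$, and, for $i<j\le p$, the off-diagonals are $M_{ij}(\tau) = R_{ij}R_{jj}\, e^{-\sigma_j\tau} + O\!\left(e^{-\sigma_{j+1}\tau}\right)$; all remaining entries are of order $e^{-\sigma_{r+1}\tau}$ or smaller. Rescaling by $e^{-\sigma_k\tau}$ sends indices $<k$ to $\infty$, indices $>k$ to $0$, and the $k$-th diagonal entry to $R_{kk}^2$. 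Under the gap at $\sigma_k$, Weyl and min-max inequalities pin the $k$-th eigenvalue of $P_\Phi T_\tau P_\Phi$ to $R_{kk}^2\, e^{-\sigma_k\tau}=\left<\eta_k,q_k\right>^2 e^{-\sigma_k\tau}$, proving \eqref{eq:refined_eigenvalue}; the residual $S(\tau)$ is harmless throughout because $\sigma_{r+1}>\sigma_k$. Under the gap at $\left\{\sigma_j,\ldots,\sigma_k\right\}$, the same block analysis forces the associated invariant subspace of $M(\tau)$ to converge to $\spa_{j\le i\le k} q_i$, which gives \eqref{eq:remarkable_subspaces}.

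For \eqref{eq:equivalent} I would push the expansion one order further. Set $\mathcal{V}_k^\tau:=\spa_{1\le i\le k}\gamma_i^\tau$; by part~\ref{item:remarkable_subspaces} it converges to $\spa_{1\le i\le k}q_i$, and its leading deviation is governed by the off-diagonal block of $M(\tau)$ between indices $\le k$ and $>k$. A first-order perturbation argument about the diagonal of $M(\tau)$ gives, for $i\le k$ and $j>k$,
\begin{equation*}
\left<q_j,\gamma_i^\tau\right>\;\sim\;\frac{M_{ij}(\tau)}{M_{ii}(\tau)-M_{jj}(\tau)}\;\sim\;\frac{R_{ij}R_{jj}}{R_{ii}^2}\,e^{-(\sigma_j-\sigma_i)\tau},
\end{equation*}
whose slowest-decaying term is the $(k,k+1)$ pair, so
\begin{equation*}
d_{\F}\!\left(\mathcal{V}_k^\tau,\,\spa_{1\le i\le k}q_i\right)\;\sim\;\left|\frac{R_{k,k+1}R_{k+1,k+1}}{R_{kk}^2}\right|e^{-(\sigma_{k+1}-\sigma_k)\tau}.
\end{equation*}
Multiplying by $\lambda_k^\tau/\lambda_{k+1}^\tau \to (R_{kk}/R_{k+1,k+1})^2\, e^{(\sigma_{k+1}-\sigma_k)\tau}$ collapses the exponentials and leaves $|R_{k,k+1}|/R_{k+1,k+1}=\left|\left<\eta_{k+1},q_k\right>/\left<\eta_{k+1},q_{k+1}\right>\right|$, as claimed. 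The main technical obstacle will be making this perturbation argument rigorous without any gap assumption among $\sigma_1,\ldots,\sigma_{k-1}$: individual $\gamma_i^\tau$ with $i<k$ need not stabilize under the hypotheses of part~3, so the argument must be carried out at the level of the $k$-dimensional invariant subspace, for instance by solving a Sylvester/Riccati fixed-point equation for the off-diagonal block of a basis of $\mathcal{V}_k^\tau$ rather than by tracking individual eigenvectors.
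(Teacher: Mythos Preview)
Your representation $P_\Phi T_\tau P_\Phi = R E(\tau) R^\top + S(\tau)$ in the $q$-basis, with the leading block of $R$ upper-triangular, is correct and is in fact the framework the paper adopts explicitly when proving \eqref{eq:equivalent}. For \eqref{eq:refined_eigenvalue} and \eqref{eq:remarkable_subspaces} your invocations of ``Weyl and min-max'' and ``block analysis'' are under-specified: a direct Weyl bound fails because the off-diagonal $M_{ik}(\tau)$ with $i<k$ has the same order $e^{-\sigma_k\tau}$ as $M_{kk}(\tau)$, so the perturbation from the diagonal is not small relative to the target. The paper resolves this by choosing test subspaces adapted to the triangular structure: for the upper bound on $\lambda_k^\tau$ one restricts to $\Phi\cap Q_{1:k-1}^\perp$, where upper-triangularity of $R$ kills the first $k-1$ spectral contributions and gives $\left<\phi,T_\tau\phi\right>\le (e^{-\sigma_k\tau}R_{kk}^2+e^{-\sigma_{k+1}\tau})\left<\phi,\phi\right>$; for the lower bound one tests on $Q_{1:k}$ with a $2\times 2$ eigenvalue estimate. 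For the subspace convergence the paper applies Davis--Kahan between $\Gamma_{1:k}^\tau$ and $\Phi\cap Q_{1:k}^\perp$, again exploiting upper-triangularity (via a polarization identity) to get the quantitative bound in \cref{prop:mainbound}. These are the arguments your matrix framework naturally suggests once made precise, so Parts 1--2 are on the right track.

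The interesting comparison is Part 3. You correctly identify the obstacle that the $\gamma_i^\tau$ with $i<k$ need not converge individually, and you propose a Sylvester/Riccati fixed-point argument at the subspace level. The paper's route is considerably simpler and avoids this machinery entirely: write the eigenvector equation as $\lambda_j^\tau\left<q_i,\gamma_j^\tau\right>=\sum_l\left<q_i,T_\tau q_l\right>\left<q_l,\gamma_j^\tau\right>$ for each $i>k$ and $j\le k$, and feed in only the crude inputs $1/\lambda_j^\tau=\mathcal{O}(e^{\sigma_j\tau})$ from \cref{thm:knyazev} and $\left|\left<q_i,T_\tau q_l\right>\right|\le e^{-\sigma_i\tau}$ from upper-triangularity. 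This already yields $\left|\left<q_i,\gamma_j^\tau\right>\right|=\mathcal{O}(e^{(\sigma_j-\sigma_i)\tau})$ uniformly over $j\le k$, with no need to resolve the $\gamma_j^\tau$ below index $k$; hence every cross-term in $d_{\F}^2=\sum_{i>k}\sum_{j\le k}\left<q_i,\gamma_j^\tau\right>^2$ is dominated by the single entry $\left<q_{k+1},\gamma_k^\tau\right>$. The exact limiting constant then falls out from $\gamma_k^\tau\to q_k$ (Part~2 applied at the isolated $\sigma_k$) together with Part~1 at indices $k$ and $k+1$. So the Riccati machinery is not needed: the eigenvalue equation plus the coarse eigenvalue bound already does the bookkeeping you were worried about.
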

\begin{proof}
See \cref{sub:eigenvalue_convergence},
\cref{sub:subspace_convergence}, and \cref{sub:error_multiplier}.
\end{proof}

The main message of \cref{thm:remarkable} 
is
that idealized VAC eigenspaces converge exponentially fast
as $\tau \rightarrow \infty$.
Because of this convergence, the approximation error must stabilize.
As the last step of our approximation error analysis,
we use the stabilization at long lag times to provide a new, asymptotically sharp bound on VAC's approximation error.

\begin{theorem}{\label{thm:remarkable2}}
When $\lambda_k^{\tau} > e^{-\sigma_{k+1} \tau}$,
the top $k$ idealized VAC eigenfunctions are bounded by
\begin{equation}
\label{eq:advantageous}
    1 \leq \frac{d_{\F}^2\left(\spa\limits_{1 \leq i \leq k} \gamma_i^{\tau}, \spa\limits_{1 \leq i \leq k} \eta_i \right)}
    {d_{\F}^2\left(\spa\limits_{1 \leq i \leq k} \eta_i, \Phi\right)}
    \leq 1 + \frac{1}{4} \left|\frac{e^{-\sigma_{k+1} \tau}}
    {\lambda_k^{\tau} - e^{-\sigma_{k+1} \tau}}\right|^2.
\end{equation}
\end{theorem}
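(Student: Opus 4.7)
Let me set $\mathcal{G}_k = \spa_{1 \le i \le k}\gamma_i^\tau$, $\mathcal{E}_k = \spa_{1 \le i \le k}\eta_i$, and introduce the auxiliary subspace $\mathcal{Q}_k = \spa_{1 \le i \le k} q_i = P_{\Phi}\mathcal{E}_k$ from Definition~\ref{def:orthogonal}. The lower bound of $1$ is immediate: $\mathcal{G}_k \subseteq \Phi$ is $k$-dimensional, and the quantity $d_{\F}(\mathcal{E}_k,\Phi)$ is the minimum of $d_{\F}(\mathcal{E}_k,\mathcal{W})$ taken over all $k$-dimensional subspaces $\mathcal{W}\subseteq\Phi$ (with minimizer $\mathcal{W}=\mathcal{Q}_k$); combined with the symmetry $d_{\F}(\mathcal{G}_k,\mathcal{E}_k)=d_{\F}(\mathcal{E}_k,\mathcal{G}_k)$ for equi-dimensional subspaces, this gives $d_{\F}(\mathcal{G}_k,\mathcal{E}_k)\geq d_{\F}(\mathcal{E}_k,\Phi)$.

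For the upper bound, the plan is to route the error through $\mathcal{Q}_k$. Let $\theta_1,\ldots,\theta_k$ be the principal angles between $\mathcal{E}_k$ and $\Phi$, with corresponding principal vectors $u_j\in\mathcal{E}_k$ and $w_j\in\Phi$ (so that $\spa\{w_j\}=\mathcal{Q}_k$). Expanding $d_{\F}^2(\mathcal{G}_k,\mathcal{E}_k) = k - \sum_i \|P_{\mathcal{E}_k} g_i\|^2$ in any orthonormal basis $\{g_i\}$ of $\mathcal{G}_k \subseteq \Phi$ and using $\langle u_j, g_i\rangle = \cos\theta_j\langle w_j, g_i\rangle$ yields the exact identity
\[
d_{\F}^2(\mathcal{G}_k,\mathcal{E}_k) \;=\; d_{\F}^2(\mathcal{E}_k,\Phi) \;+\; \sum_{j=1}^k \cos^2\theta_j\,\lVert P_{\mathcal{G}_k^\perp\cap\Phi}\, w_j\rVert^2.
\]
Since $\cos^2\theta_j\le 1$ and $\sum_j \lVert P_{\mathcal{G}_k^\perp\cap\Phi} w_j\rVert^2 = d_{\F}^2(\mathcal{Q}_k,\mathcal{G}_k)$, the problem reduces to bounding the in-$\Phi$ discrepancy $d_{\F}^2(\mathcal{Q}_k,\mathcal{G}_k)$, with any slack in the $\cos^2\theta_j$ factor eventually accounting for the constant $\tfrac{1}{4}$.

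To bound $d_{\F}(\mathcal{G}_k,\mathcal{Q}_k)$, the plan is to treat $\mathcal{Q}_k$ as an approximately invariant subspace of the compressed self-adjoint operator $H := P_{\Phi}T_{\tau}P_{\Phi}$ (whose top-$k$ eigenspace is $\mathcal{G}_k$) and to invoke a Davis--Kahan $\sin\Theta$ theorem in Frobenius norm. The residual is controlled by the key identity $H q = P_{\Phi}T_{\tau}\eta - P_{\Phi}T_{\tau}P_{\Phi^\perp}\eta$, valid for any $q = P_{\Phi}\eta$ with $\eta\in\mathcal{E}_k$: because $T_\tau$ preserves $\mathcal{E}_k$, the first term already lies in $\mathcal{Q}_k$; splitting $P_{\Phi^\perp}\eta = P_{\mathcal{E}_k}P_{\Phi^\perp}\eta + P_{\mathcal{E}_k^\perp}P_{\Phi^\perp}\eta$ further shows that only the $\mathcal{E}_k^\perp$-component survives the projection onto $\mathcal{Q}_k^\perp\cap\Phi$; and Assumption~\ref{ass:quasicompact} gives $\lVert T_\tau\big|_{\mathcal{E}_k^\perp}\rVert_2 \le e^{-\sigma_{k+1}\tau}$. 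Assembling these pieces in the principal-vector basis, together with $\lVert P_{\mathcal{E}_k^\perp}P_{\Phi^\perp}u_j\rVert = \sin\theta_j\cos\theta_j$, produces the Frobenius bound $\lVert P_{\mathcal{Q}_k^\perp\cap\Phi}H P_{\mathcal{Q}_k}\rVert_{\F}\le e^{-\sigma_{k+1}\tau}\, d_{\F}(\mathcal{E}_k,\Phi)$. For the gap, Theorem~\ref{thm:knyazev}(i) guarantees $\lambda_{k+1}^\tau \le e^{-\sigma_{k+1}\tau}$, so the spectrum of $H$ on $\mathcal{G}_k$ is separated from the spectrum on $\mathcal{G}_k^\perp\cap\Phi$ by at least $\lambda_k^\tau - e^{-\sigma_{k+1}\tau}$, which is precisely the denominator in the target bound.

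The principal obstacle is obtaining the sharp constant $\tfrac{1}{4}$ rather than $1$. A na\"{\i}ve combination of the residual/gap estimates via the standard $\sin\Theta$ inequality yields only $d_{\F}^2(\mathcal{G}_k,\mathcal{E}_k) \le d_{\F}^2(\mathcal{E}_k,\Phi)\bigl[1 + (\cdot)^2\bigr]$. I expect to recover the factor $\tfrac{1}{4}$ either by invoking a $\sin 2\Theta$-type refinement of Davis--Kahan (exploiting $\sin 2\Theta = 2\sin\Theta\cos\Theta$ to halve the $\sin\Theta$ estimate before squaring) or by retaining the $\cos^2\theta_j$ weights in the second-paragraph identity and carrying out a joint optimization, so that the doubling loss incurred by treating the two error sources sequentially is avoided. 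This tight constant is what makes the bound asymptotically sharp as $\tau\to\infty$, consistent with the limit identified in Theorem~\ref{thm:remarkable}.
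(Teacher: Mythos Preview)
Your overall architecture matches the paper's almost exactly: introduce $\mathcal{Q}_k=P_\Phi\mathcal{E}_k$, split $d_{\F}^2(\mathcal{G}_k,\mathcal{E}_k)$ into the projection error $d_{\F}^2(\mathcal{E}_k,\Phi)$ plus an in-$\Phi$ term controlled by $d_{\F}(\mathcal{G}_k,\mathcal{Q}_k)$, and bound the latter via a Davis--Kahan argument with gap $\lambda_k^\tau-e^{-\sigma_{k+1}\tau}$ and residual $\lVert P_{\Phi\cap\mathcal{Q}_k^\perp}T_\tau P_{\mathcal{Q}_k}\rVert_{\F}$. Your residual identity and the calculation $\lVert P_{\mathcal{E}_k^\perp}P_{\Phi^\perp}u_j\rVert=\sin\theta_j\cos\theta_j$ are correct. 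The one genuine gap is the sharp constant.

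You correctly observe that your route yields $\lVert P_{\Phi\cap\mathcal{Q}_k^\perp}T_\tau P_{\mathcal{Q}_k}\rVert_{\F}\le e^{-\sigma_{k+1}\tau}d_{\F}(\mathcal{E}_k,\Phi)$, giving only the constant $1$ rather than $\tfrac14$. Neither of your proposed fixes closes the gap. Retaining the $\cos^2\theta_j$ weights leads to $\sum_j\cos^2\theta_j\sin^2\theta_j$, which is bounded by $\sum_j\sin^2\theta_j$ but not by $\tfrac14\sum_j\sin^2\theta_j$; and a generic $\sin 2\Theta$ refinement does not supply the missing factor either, because the improvement is not a Davis--Kahan phenomenon at all. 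The paper instead sharpens the \emph{residual} bound itself to $\lVert P_{\Phi\cap\mathcal{Q}_k^\perp}T_\tau q\rVert\le \tfrac12 e^{-\sigma_{k+1}\tau}\lVert P_{\mathcal{E}_k^\perp}q\rVert$ for $q\in\mathcal{Q}_k$, via a polarization-identity trick that crucially exploits the \emph{nonnegativity} of $T_\tau$ (Property~\ref{item:property3}): for $\phi\in\Phi\cap\mathcal{Q}_k^\perp$ with $\lVert\phi\rVert=\lVert P_{\mathcal{E}_k^\perp}q\rVert=1$ one has $\phi\in\mathcal{E}_k^\perp$ and $\langle\phi,q\rangle=0$, hence $\lVert P_{\mathcal{E}_k^\perp}(\phi\pm q)\rVert^2=2$, and then
\[
|\langle\phi,T_\tau q\rangle|=\tfrac14\bigl|\langle P_{\mathcal{E}_k^\perp}(\phi+q),T_\tau P_{\mathcal{E}_k^\perp}(\phi+q)\rangle-\langle P_{\mathcal{E}_k^\perp}(\phi-q),T_\tau P_{\mathcal{E}_k^\perp}(\phi-q)\rangle\bigr|\le \tfrac12 e^{-\sigma_{k+1}\tau},
\]
since both quadratic forms lie in $[0,2e^{-\sigma_{k+1}\tau}]$. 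This halves the residual before Davis--Kahan is applied, producing the $\tfrac14$ after squaring. The nonnegativity of $T_\tau$ is the missing structural ingredient in your plan.
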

\begin{proof}
See \cref{sub:subspace_convergence}.
\end{proof}


Interpreting the results of this subsection,
we can identify concrete strategies for how best to reduce approximation error.
The approximation error can be divided into two parts:
\begin{equation}
\label{eq:lower_bound}
    \underbrace{d_{\F}\left(\spa_{j \leq i \leq k} \gamma_i^{\tau},
    \spa_{j \leq i \leq k} \eta_i\right)}_{\text{approximation error}}
    \leq
    \underbrace{d_{\F}\left(\spa_{j \leq i \leq k} q_i,
    \spa_{j \leq i \leq k} \eta_i\right)}_{\text{lag-time-independent error}}
    + \underbrace{d_{\F}\left(\spa_{j \leq i \leq k} \gamma_i^{\tau},
    \spa_{j \leq i \leq k} q_i\right)}_{\text{lag-time-dependent error}}.
\end{equation}
In this decomposition, we separate the lag-time-independent error and the lag-time-dependent
error. In applications of VAC, there are separate strategies for reducing these two error sources.

To reduce the lag-time-independent error,
the best strategy is to enrich the basis set as much as possible.
If the basis set is rich enough to approximate
the top eigenfunctions $\eta_1, \eta_2, \ldots, \eta_k$
with high accuracy,
then the lag-time-independent error must be low.
Assuming there is a gap between $\left\{\sigma_j, \ldots, \sigma_k\right\}$
and all other $\sigma_i$ values,
\cref{lem:quadratic} guarantees
\begin{equation}
\label{eq:right_hand_side}
    \underbrace{d_{\F}^2\left(\spa_{j \leq i \leq k} q_i,
    \spa_{j \leq i \leq k} \eta_i\right)}_{\text{squared lag-time-independent error}}
    \leq d_{\F}^2\left(\spa_{1 \leq i \leq j-1} \eta_i, \Phi \right)
    + d_{\F}^2\left(\spa_{1 \leq i \leq k} \eta_i, \Phi \right).
\end{equation}
As the basis set becomes increasingly rich, the right-hand-side
of the inequality converges to zero,
implying that the lag-time-independent error must vanish.


To reduce the lag-time-dependent error,
the best strategy is simply to increase the lag time.
As $\tau \rightarrow \infty$, 
\cref{thm:remarkable} guarantees that
the lag-time-dependent error
must decay exponentially fast.



\subsection{Estimation error}{\label{sub:estimation}}

In this subsection, we present formulas for the estimation error
and explain how to calculate the mean squared estimation error using data.

\subsubsection{Formulas for the estimation error}

In applications of VAC, it is not typically possible to evaluate expectations
$C_{ij}\left(\tau\right) = \E_{\mu}\left[\phi_i\left(X_0\right) \phi_j\left(X_\tau\right)\right]$ exactly.
Instead, trajectory data is used to provide estimates
$\hat{C}_{ij}\left(\tau\right) \approx C_{ij}\left(\tau\right)$.
In the asymptotic limit as $\hat{C}\left(\tau\right) \rightarrow C\left(\tau\right)$
and $\hat{C}\left(0\right) \rightarrow C\left(0\right)$,
the estimation error is governed by the following asymptotic formulas.

\begin{theorem}[Estimation error]{\label{thm:samplingError}}
Fix the basis set $\Phi$ and the lag time $\tau > 0$,
but allow matrices $\hat{C}\left(0\right)$
and $\hat{C}\left(\tau\right)$ to vary.
Assume idealized VAC eigenfunctions are normalized so that $\left<\gamma_i^{\tau}, \gamma_j^{\tau}\right> = \delta_{ij}$,
and recall $v_i\left(\tau\right)$
is the vector with
$\gamma_i^\tau = \sum_{j=1}^n v_j^i\left(\tau\right) \phi_j$.
Set
\begin{equation}
\hat{L}_{ij}\left(\tau\right) = v^i\left(\tau\right)^T
\left[\hat{C}\left(\tau\right) 
- \lambda_j^{\tau} \hat{C}\left(0\right)
\right]
v^j\left(\tau\right),
\quad
1 \leq i,j \leq n.
\end{equation}
Then, VAC estimates have the following behavior as $\hat{C}\left(\tau\right) \rightarrow C\left(\tau\right)$ and $\hat{C}\left(0\right) \rightarrow C\left(0\right)$:
\begin{enumerate}[leftmargin = *]
\item
When there is a gap between $\lambda_k^{\tau}$
and all other $\lambda_i^{\tau}$ values,
the $k$th VAC eigenvalue satisfies
\begin{equation}
\label{eq:eigenvalue_error}
    \hat{\lambda}_k^{\tau} - \lambda_k^{\tau}
    =  \hat{L}_{kk}\left(\tau\right)
    + \mathcal{O}\left(\left\lVert \hat{C}\left(\tau\right) - C\left(\tau\right)\right\rVert^2_{\F}
    + \left\lVert \hat{C}\left(0\right) - C\left(0\right)\right\rVert^2_{\F}\right).
\end{equation}
\item
When there is a gap between
$\left\{\lambda_j^{\tau}, \ldots, \lambda_k^{\tau}\right\}$
and all other $\lambda_i^{\tau}$ values,
the subspace $\spa_{j \leq i \leq k} \hat{\gamma}_i^{\tau}$
of VAC eigenfunctions satisfies
\begin{align}
\label{eq:asymptotic}
\begin{aligned}
    d_{\F}\left(\spa_{j \leq i \leq k} \hat{\gamma}_i^{\tau}, \spa_{j \leq i \leq k} \gamma_i^{\tau}\right)
    & = \left(\sum_{\substack{l < j \\ \text{or } l > k}}
    \sum_{m = j}^k
    \left|\frac{\hat{L}_{l m}^{\tau}}
    {\lambda_l^\tau - \lambda_m^{\tau}}\right|^2\right)^{1 \slash 2} \\
    & + \mathcal{O}\left(\left\lVert \hat{C}\left(\tau\right) - C\left(\tau\right)\right\rVert^2_{\F}
    + \left\lVert \hat{C}\left(0\right) - C\left(0\right)\right\rVert^2_{\F}\right).
\end{aligned}
\end{align}
Moreover, the condition number for the subspace $\spa_{j \leq i \leq k} \hat{\gamma}_i^{\tau}$ is given by
\begin{equation}
\label{eq:condition}
    \limsup_{\substack{\hat{C}\left(\tau\right) \rightarrow C\left(\tau\right) \\ \hat{C}\left(0\right) \rightarrow C\left(0\right)}}
    \frac{d_{\F}\left(\spa\limits_{j \leq i \leq k} \hat{\gamma}_i^{\tau}, \spa\limits_{j \leq i \leq k} \gamma_i^{\tau}\right)} {\left\lVert 
    \hat{L}\left(\tau\right)
    \right\rVert_{\F}} = \frac{1}{\min\left\{\lambda_{j-1}^{\tau} - \lambda_j^{\tau},
    \lambda_k^{\tau} - \lambda_{k+1}^{\tau}\right\}},
\end{equation}
with the conventions $\lambda_0^{\tau} = \infty$ and $\lambda_{n+1}^{\tau} = -\infty$.
\end{enumerate}
\end{theorem}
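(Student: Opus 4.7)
The plan is to reduce the generalized eigenvalue problem $\hat{C}(\tau) \hat{v} = \hat{\lambda} \hat{C}(0) \hat{v}$ to a standard perturbed symmetric eigenvalue problem in the coordinates defined by the idealized VAC eigenvectors, and then apply first-order perturbation expansions. Let $V = \bigl[v^1(\tau), \ldots, v^n(\tau)\bigr]$ be the matrix whose columns are the idealized VAC eigenvectors. Because $\{\gamma_i^\tau\}$ is orthonormal in $L^2(\mu)$ and satisfies $C(\tau) v^i(\tau) = \lambda_i^\tau C(0) v^i(\tau)$, the change of variables $\hat{v} = Vw$ gives
\begin{equation*}
V^T C(0) V = I, \qquad V^T C(\tau) V = \Lambda := \diag(\lambda_1^\tau, \ldots, \lambda_n^\tau).
\end{equation*}
Writing $V^T \hat{C}(0) V = I + E_0$ and $V^T \hat{C}(\tau) V = \Lambda + E_\tau$, with $E_0, E_\tau = \mathcal{O}(\|\hat{C}(\cdot) - C(\cdot)\|_{\F})$, the generalized eigenvalue problem becomes $(\Lambda + E_\tau) w = \hat{\lambda}\,(I + E_0) w$, and one directly verifies the identity $\hat{L}_{ij}(\tau) = (E_\tau)_{ij} - \lambda_j^\tau (E_0)_{ij}$.

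Next I would carry out the standard first-order expansion. For part 1, I assume the gap condition isolating $\lambda_k^\tau$, write $\hat{\lambda}_k^\tau = \lambda_k^\tau + \delta_k$ and $w_k = e_k + \sum_{j \neq k} c_{jk} e_j + \mathcal{O}(\epsilon^2)$, substitute into the perturbed eigenproblem, and read off:
\begin{align*}
e_k\text{-component:} & \quad \delta_k = (E_\tau)_{kk} - \lambda_k^\tau (E_0)_{kk} = \hat{L}_{kk}(\tau), \\
e_j\text{-component } (j \neq k):& \quad c_{jk}(\lambda_j^\tau - \lambda_k^\tau) = -(E_\tau)_{jk} + \lambda_k^\tau (E_0)_{jk} = -\hat{L}_{jk}(\tau).
\end{align*}
The first line gives \eqref{eq:eigenvalue_error}. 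The second gives $c_{jk} = \hat{L}_{jk}(\tau)/(\lambda_k^\tau - \lambda_j^\tau)$, which will feed the subspace estimate.

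For part 2, I would treat the whole cluster $\{j, \ldots, k\}$ at once so as not to require simplicity inside the cluster. Fix an orthonormal basis $\{w_m\}_{m=j}^k$ of the perturbed invariant subspace and expand $w_m = e_m + \sum_{l: l<j \text{ or } l>k} c_{lm} e_l + (\text{components inside the cluster}) + \mathcal{O}(\epsilon^2)$; the gap between cluster and non-cluster eigenvalues makes the off-cluster coefficients unambiguous, and by the same Galerkin identity $c_{lm} = \hat{L}_{lm}(\tau)/(\lambda_m^\tau - \lambda_l^\tau)$ for $l$ outside the cluster. Since the in-cluster rotation leaves $\spa_{j \le m \le k} w_m$ unchanged, the projection distance depends only on the off-cluster components, and to leading order
\begin{equation*}
d_{\F}^2\!\left(\spa_{j\le i\le k}\hat{\gamma}_i^\tau, \spa_{j\le i\le k}\gamma_i^\tau\right) = \sum_{m=j}^k \sum_{l<j\text{ or }l>k} |c_{lm}|^2 + \mathcal{O}(\epsilon^4),
\end{equation*}
which is exactly \eqref{eq:asymptotic}. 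I would use \cref{def:principal} and the fact that $\gamma_i^\tau$ form an $L^2(\mu)$-orthonormal family to convert the coordinate estimate to the claimed $d_{\F}$ distance; this is the main bookkeeping step, but it is routine once the orthonormality is in hand.

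For the condition number \eqref{eq:condition}, I maximize the ratio
\begin{equation*}
\frac{\bigl(\sum_{l,m}|\hat{L}_{lm}(\tau)/(\lambda_l^\tau - \lambda_m^\tau)|^2\bigr)^{1/2}}{\|\hat{L}(\tau)\|_{\F}}
\end{equation*}
over all admissible perturbations. Taking $\hat{L}(\tau)$ supported on a single index pair $(l^\star, m^\star)$ saturates the bound with value $1/|\lambda_{l^\star}^\tau - \lambda_{m^\star}^\tau|$; the smallest such denominator is obtained by picking $(l^\star, m^\star)$ across the nearest cluster boundary, giving $\min\{\lambda_{j-1}^\tau - \lambda_j^\tau, \lambda_k^\tau - \lambda_{k+1}^\tau\}$ as claimed. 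The main obstacle I anticipate is justifying the first-order expansion uniformly when the cluster has repeated eigenvalues and showing the remainder is $\mathcal{O}(\|E_0\|_{\F}^2 + \|E_\tau\|_{\F}^2)$; this can be handled either by an implicit-function argument around the unperturbed Galerkin matrix or by the standard Kato/Davis-Kahan resolvent expansion adapted to the generalized eigenvalue setting.
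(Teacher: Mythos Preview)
Your proposal is correct and follows essentially the same approach as the paper: both transform the problem into the $\gamma_i^\tau$-coordinates using $V^T C(0) V = I$, $V^T C(\tau) V = \Lambda$, identify $\hat{L}$ as the first-order perturbation, and invoke standard first-order eigenvalue/invariant-subspace perturbation theory. The only cosmetic difference is that the paper first converts the generalized problem to a standard one by expanding $\hat{C}(0)^{-1}$ in a Neumann series to obtain $V^{-1}\hat{C}(0)^{-1}\hat{C}(\tau)V = \Lambda + \hat{L}(\tau) + \mathcal{O}(\epsilon^2)$ and then cites \cite{karow2014perturbation} for the perturbation formulas, whereas you keep the generalized form $(\Lambda+E_\tau)w=\hat{\lambda}(I+E_0)w$ and carry out the expansion by hand; both routes yield the same leading-order terms and the same remainder estimate.
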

\begin{proof}
See \cref{sub:matrix_perturbation}.
\end{proof}


A useful quantity identified in \cref{thm:samplingError} is the condition number \eqref{eq:condition},
which quantifies VAC’s sensitivity to small errors in the matrices $\hat{C}\left(\tau\right)$ and $\hat{C}\left(0\right)$.
In experiments, we find the condition number is a useful heuristic for judging whether a VAC estimation
problem is easy or hard—more specifically whether a large or small data set is required for
accurate estimation.
When the condition number 
for a subspace of VAC eigenfunctions is higher than $5$ at all lag times, 
the numerical experiments in \cref{sec:experiments} show that
VAC is prone to experiencing large amounts of estimation error.
Empirically, we can estimate the minimum condition number across all lag times by using
\begin{equation}
\min_{\tau \geq 0} \frac{1}{\min\left\{\hat{\lambda}_{j-1}^\tau - \hat{\lambda}_j^\tau,
\hat{\lambda}_k^\tau - \hat{\lambda}_{k+1}^\tau\right\}}.
\end{equation}
We recommend that VAC users identify the minimum condition number for various subspaces and focus on estimating the well-conditioned
subspaces whenever possible.
Additionally, we recommend that authors report the minimum condition number along with their VAC results, helping readers to assess whether the results 
could be affected by estimation error.


\subsubsection{Calculating the asymptotic mean squared estimation error using data}

Here, we explain how to calculate the mean squared estimation error using trajectory data.
We assume for simplicity that
the data consists of a
single long stationary trajectory
of the process $X_t$. However, the estimation procedure described here could be generalized to other types of trajectory data.

Our approach for calculating the mean squared estimation error is based on the following convergence in distribution result.

\begin{theorem}{\label{thm:moments}}
Fix the basis set $\Phi$ and the lag time $\tau > 0$, but allow the data set used in VAC to vary.
Assume $\E_{\mu}\left|\phi_i\left(X_0\right)\right|^4 < \infty$ for $1 \leq i \leq n$.
Assume a stationary trajectory $ X_0, X_{\Delta}, X_{2 \Delta}, \ldots, X_{T-\Delta}$
is simulated and estimates $\hat{C}_{ij}\left(t\right) \approx C_{ij}\left(t\right)$ are formed using
\begin{equation}
    \hat{C}_{ij}\left(t\right)
    = \frac{\Delta}{T - t} 
    \sum_{s=0}^{\frac{T - t}{\Delta} - 1}
    \frac{\phi_i\left(X_{s \Delta}\right) \phi_j\left(X_{s \Delta + t}\right)
    + \phi_j\left(X_{s \Delta}\right) \phi_i\left(X_{s \Delta + t}\right)}{2}.
\end{equation}
Then,
VAC estimates have the following behavior
as $T \rightarrow \infty$:
\begin{enumerate}[leftmargin = *]
    \item 
    When there is a gap between $\lambda_k^{\tau}$
    and all other $\lambda_i^{\tau}$ values, the $k$th VAC eigenvalue satisfies
    \begin{equation}
    \label{eq:formula1}
    \sqrt{T}
    \left(\hat{\lambda}_k^{\tau} - \lambda_k^{\tau}\right)
    \stackrel{\mathcal{D}}{\rightarrow}
    Z_{kk}^{\tau}.
    \end{equation}
    \item
    When there is a gap between
    $\left\{\lambda_j^{\tau}, \ldots, \lambda_k^{\tau}\right\}$
    and all other $\lambda_i^{\tau}$ values,
    the subspace $\spa_{j \leq i \leq k} \hat{\gamma}_i^{\tau}$
    of VAC eigenfunctions satisfies
    \begin{equation}
    \label{eq:formula2}
    T d_{\F}\left(\spa_{j \leq i \leq k} \hat{\gamma}_i^{\tau}, \spa_{j \leq i \leq k} \gamma_i^{\tau}\right)^2
    \stackrel{\mathcal{D}}{\rightarrow} \sum_{\substack{l < j \\ \text{or } l > k}}
    \sum_{m = j}^k
    \left|\frac{Z_{l m}^{\tau}}
    {\lambda_l^\tau - \lambda_m^{\tau}}\right|^2.
    \end{equation}
\end{enumerate}
Here, $\left(Z_{lm}^{\tau}\right)_{1 \leq l,m \leq n}$ is a mean-zero multivariate Gaussian with variance terms
\begin{equation}
\label{eq:asym_var}
    \E \left|Z_{l m}^{\tau}\right|^2
    = \Delta
    \sum_{s = -\infty}^{\infty}
    \Cov_{\mu}\left[F_{l m}^\tau\left(X_0, X_\tau\right),
    F_{l m}^\tau\left(X_{s \Delta}, X_{s \Delta + \tau}\right)\right],
\end{equation}
where we have defined
\begin{equation}
    F_{l m}^\tau\left(x, y\right) =
    \frac{\gamma_{l}^\tau \left(x\right)
    \gamma_m^\tau\left(y\right) +
    \gamma_{l}^\tau\left(y\right)
    \gamma_m^\tau\left(x\right)}{2}
    - \lambda_m^\tau \frac{
    \gamma_{l}^\tau\left(x\right)
    \gamma_m^\tau\left(x\right) +
    \gamma_{l}^\tau\left(y\right)
    \gamma_m^\tau\left(y\right)}{2}.
\end{equation}
\end{theorem}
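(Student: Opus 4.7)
The plan is to combine Theorem 3.4, which provides first-order perturbation expansions for VAC eigenvalues and eigenspaces in terms of the matrix $\hat{L}(\tau)$, with a central limit theorem for additive functionals of the stationary, reversible Markov process. Together these two ingredients translate distributional limits for the sample matrices $\hat{C}(0)$ and $\hat{C}(\tau)$ into the stated limits for the VAC estimates.

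First, I would rewrite the entries of $\hat{L}(\tau)$ as centered stationary time averages. Since $\gamma_i^\tau = \sum_l v^i_l \phi_l$ lies in $\Phi$ and satisfies $P_\Phi T_\tau \gamma_j^\tau = \lambda_j^\tau \gamma_j^\tau$, a direct computation gives
\begin{equation}
v^i(\tau)^T C(\tau)\, v^j(\tau) = \lambda_j^\tau \delta_{ij}, \qquad v^i(\tau)^T C(0)\, v^j(\tau) = \delta_{ij},
\end{equation}
so $\E[\hat{L}_{ij}(\tau)] = 0$. Expanding $\hat{C}(\tau)$ and $\hat{C}(0)$ and absorbing the mismatch between their $1/T$ and $1/(T-\tau)$ normalizations, together with the finitely many boundary terms separating the two sums, into an $\mathcal{O}(1/T)$ remainder yields
\begin{equation}
\hat{L}_{ij}(\tau) = \frac{\Delta}{T-\tau}\sum_{s=0}^{(T-\tau)/\Delta - 1} F_{ij}^\tau(X_{s\Delta}, X_{s\Delta + \tau}) + \mathcal{O}(1/T),
\end{equation}
with $F_{ij}^\tau$ as in equation (3.14). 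The symmetrized second term of $F_{ij}^\tau$ appears because $\hat{C}(0)$ can be replaced by the average of its evaluations at $X_{s\Delta}$ and $X_{s\Delta+\tau}$ at the cost of only such a lower-order correction.

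Second, I would apply the central limit theorem for stationary, reversible Markov processes (in the Kipnis--Varadhan form) to the vector of time averages produced in step one. Under \cref{ass:quasicompact} the transition operator has a spectral gap, which together with the $L^4$ moment assumption on the $\phi_i$ ensures that each $F_{ij}^\tau$ lies in $L^2(\mu \otimes \mu)$ and that the autocovariances $\Cov_\mu[F_{ij}^\tau(X_0,X_\tau),F_{ij}^\tau(X_{s\Delta},X_{s\Delta+\tau})]$ are absolutely summable in $s$. The Cram\'er--Wold device, applied to arbitrary linear combinations $\sum_{i,j} a_{ij} F_{ij}^\tau$, upgrades the scalar CLT to the joint statement that $\sqrt{T}\,\hat{L}(\tau)$ converges in distribution to a centered Gaussian matrix $Z^\tau$ whose covariance structure is exactly the one recorded in equation (3.13).

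Finally, I would combine the joint CLT with Theorem 3.4 through Slutsky's theorem and the continuous mapping theorem. The quadratic remainder in (3.10) has size $\|\hat{C}(\tau)-C(\tau)\|_\F^2 + \|\hat{C}(0)-C(0)\|_\F^2 = \mathcal{O}(1/T)$ in probability, which is negligible when multiplied by $\sqrt{T}$, giving (3.11). For (3.12), the asymptotic identity of Theorem 3.4 expresses the squared subspace distance as a quadratic form in $\hat{L}(\tau)$ plus a remainder of the same $\mathcal{O}(1/T)$ order; multiplying by $T$ and applying the continuous mapping theorem to the quadratic map $M \mapsto \sum_{l,m}|M_{lm}/(\lambda_l^\tau - \lambda_m^\tau)|^2$ yields the stated Gaussian quadratic form in $Z^\tau$. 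The main obstacle is the joint CLT itself: establishing absolute summability of the autocovariances of the products $F_{ij}^\tau(X_{s\Delta}, X_{s\Delta+\tau})$ hinges on the spectral gap provided by quasi-compactness (to obtain geometric decay of correlations for functions of the pair $(X_{s\Delta}, X_{s\Delta+\tau})$) and on the $L^4$ moment assumption (to guarantee square-integrability of these products). Once summability is in hand, the CLT follows from standard results for reversible Markov chain functionals, and identifying the asymptotic variance with (3.13) is a routine computation using stationarity.
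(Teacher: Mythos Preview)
Your proposal is correct and follows essentially the same route as the paper: a stationary CLT for the time averages (the paper invokes the martingale CLT of Hall--Heyde via an exponential negligibility bound $\lVert \E[\phi_{ij}(X_{s\Delta},X_{s\Delta+\tau})\mid X_0]-C_{ij}(t)\rVert\le Ce^{-\sigma_2 s\Delta}$, while you invoke Kipnis--Varadhan under the same spectral-gap hypothesis), then Cram\'er--Wold for joint normality, then the delta method/continuous mapping through the first-order expansion of \cref{thm:samplingError}. The only cosmetic difference is that the paper first proves the CLT for the entries of $\hat{C}(0)$ and $\hat{C}(\tau)$ and then passes to $\hat{L}(\tau)$ by linearity, whereas you rewrite $\hat{L}_{ij}(\tau)$ directly as a time average of $F_{ij}^\tau$ before applying the CLT; your handling of the normalization mismatch and boundary terms as an $\mathcal{O}_p(1/T)$ correction is exactly what makes these two orderings equivalent.
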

\begin{proof}
See \cref{sub:variance}
\end{proof}

The great value of \cref{thm:moments} is that
it suggests a data-driven approach for calculating 
the mean squared estimation error in the asymptotic limit.
First, we can use the data set to estimate the $\E\left|Z_{lm}^{\tau}\right|^2$ terms by means of equation \eqref{eq:asym_var}.
Second, we can substitute the $\E\left|Z_{lm}^{\tau}\right|^2$ estimates into equation
\eqref{eq:formula1} or \eqref{eq:formula2} to compute the mean squared estimation error for eigenvalues or invariant subspaces.
In the supplement, we provide a step-by-step description of this estimation procedure.


In \cref{fig:error_scaling}, we calculate the mean squared estimation error by using a single trajectory of data.
We find that it is possible to accurately identify the lag times 
at which the mean squared estimation error exceeds a critical threshold, such as $0.2$.
Moreover, in the numerical experiments in \cref{sec:experiments}, $0.2$ is a typical level at which
the estimation error begins to contribute significantly to VAC's overall error.
Therefore, in VAC applications we recommend calculating the mean squared error and avoiding lag times where the error exceeds such a threshold.


\begin{figure}[h!]
    \centering
    \includegraphics[scale = 0.4, clip]{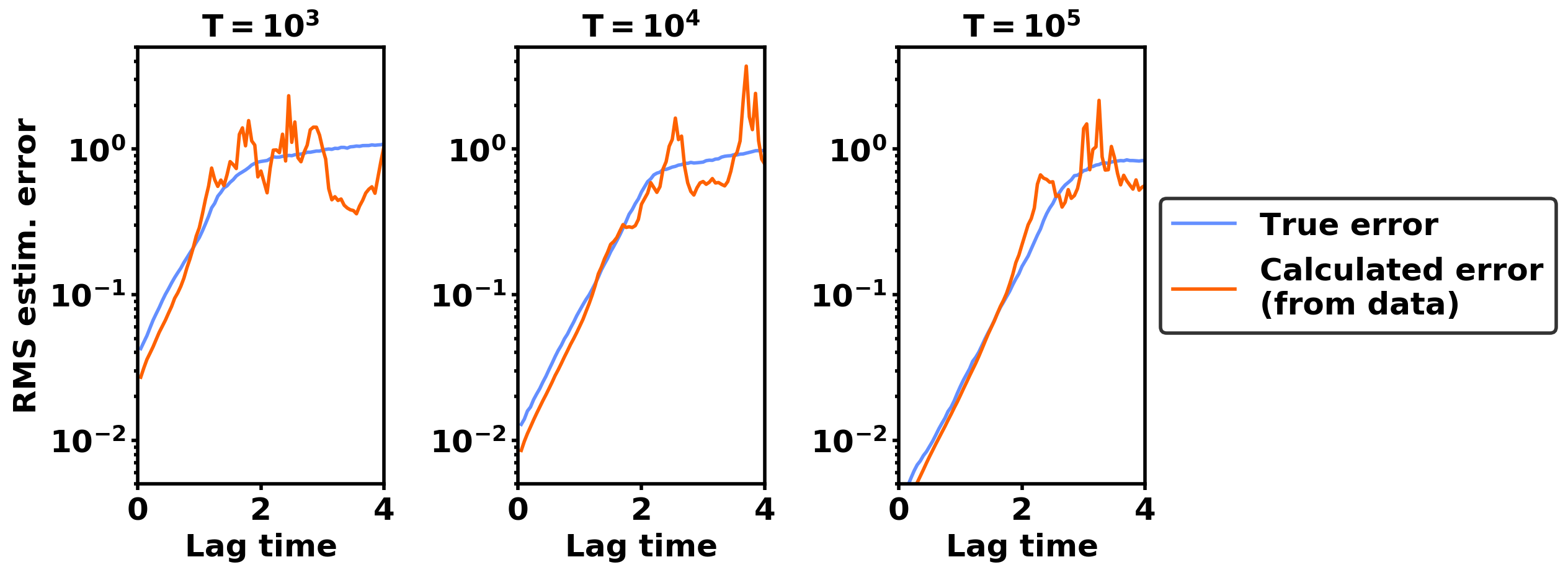}
    \caption{Root mean squared estimation error for different trajectory lengths $T$.
    The calculated error is obtained from a single trajectory of data using formulas
    in \cref{thm:moments}.
    The true error is obtained through $100$ independent trials.
    \label{fig:error_scaling}}
\end{figure}

We conclude this section by considering three additional strategies to reduce the estimation error of VAC.
The first strategy is to increase trajectory length.
By increasing the length $T$ of the trajectory,
the estimation error consistently decreases at a  $1 \slash \sqrt{T}$ rate
as shown in \cref{fig:error_scaling}.

The second strategy for reducing the estimation error is to prune the size of the basis set.
We find in \cref{thm:samplingError}
that the squared estimation error
increases linearly with the number of basis functions.
Therefore, it is best to include only those basis functions that have the potential to overlap
with the eigenfunctions of the transition operator.

The final strategy for reducing the estimation error is to select basis functions
with favorable integrability properties.
In \cref{thm:moments}, it is seen that the mean squared estimation error depends on the fourth moments of the idealized VAC coordinates.
If the basis functions themselves have large kurtosis
\begin{equation}
    \frac{\E_{\mu}\left|\phi_i\left(X_0\right) - \E_{\mu}\left[\phi_i\left(X_0\right)\right]\right|^4}
    {\left(\E_{\mu}\left|\phi_i\left(X_0\right) - \E_{\mu}\left[\phi_i\left(X_0\right)\right]\right|^2\right)^2},
\end{equation}
this can increase the estimation error in VAC calculations.
Favorable integrability properties may be one factor that helps explain the success of Markov state models, in which the basis consists of indicator functions on a partition of the state space.
The higher moments of indicator functions are often well-controlled, 
compared to, e.g., higher order polynomials of the coordinate axes.


\section{Numerical experiments}
\label{sec:experiments}

In this section, we report on two numerical experiments 
that illustrate the major factors impacting VAC accuracy.
Moreover, these experiments show how computing the VAC condition number and the mean squared estimation error
can help to improve VAC's accuracy.



\subsection{Varying the basis size and trajectory length}

First, we apply VAC to estimate the span of eigenfunctions $\eta_1$, $\eta_2$ and $\eta_3$
for the Ornstein-Uhlenbeck (OU) process
\begin{equation}
    \mathop{dX} = -X \mathop{dt} + \sqrt{2} \mathop{dW}.
\end{equation}
In two different trials,
we show how VAC's accuracy depends on the size of the basis set and the length of the simulated trajectory.
The number of basis functions and the trajectory length are varied as follows:
\vspace{.2cm}
\begin{center}
\begin{tabular}{l|l|l}
        ~ & Trial $1$ & Trial $2$  \\
        \hline
        Basis functions & $n = 20$ & $n = 50$ \\
        Trajectory length & $T = 10^4$ & $T = 500$
\end{tabular}
\end{center}
\vspace{.2cm}
In both trials, the basis functions are indicator functions on disjoint intervals.

The two different trials demonstrate that the breakdown of approximation error and estimation error is sensitive to context, as seen in \cref{fig:OU_example}.
The approximation error is higher in trial 1
because of the smaller set of basis functions,
whereas the estimation error is higher in trial 2
because of the smaller data set.
In trial 1, it is optimal to use a comparatively long lag time of $\tau = 0.7$ to reduce the approximation error.
In contrast, in trial 2 it is
optimal to use a comparatively short lag time of $\tau = 0.1$
to avoid the increase in estimation error at longer lag times.

\begin{figure}[h!]
    \centering
    \includegraphics[scale = .35, clip]{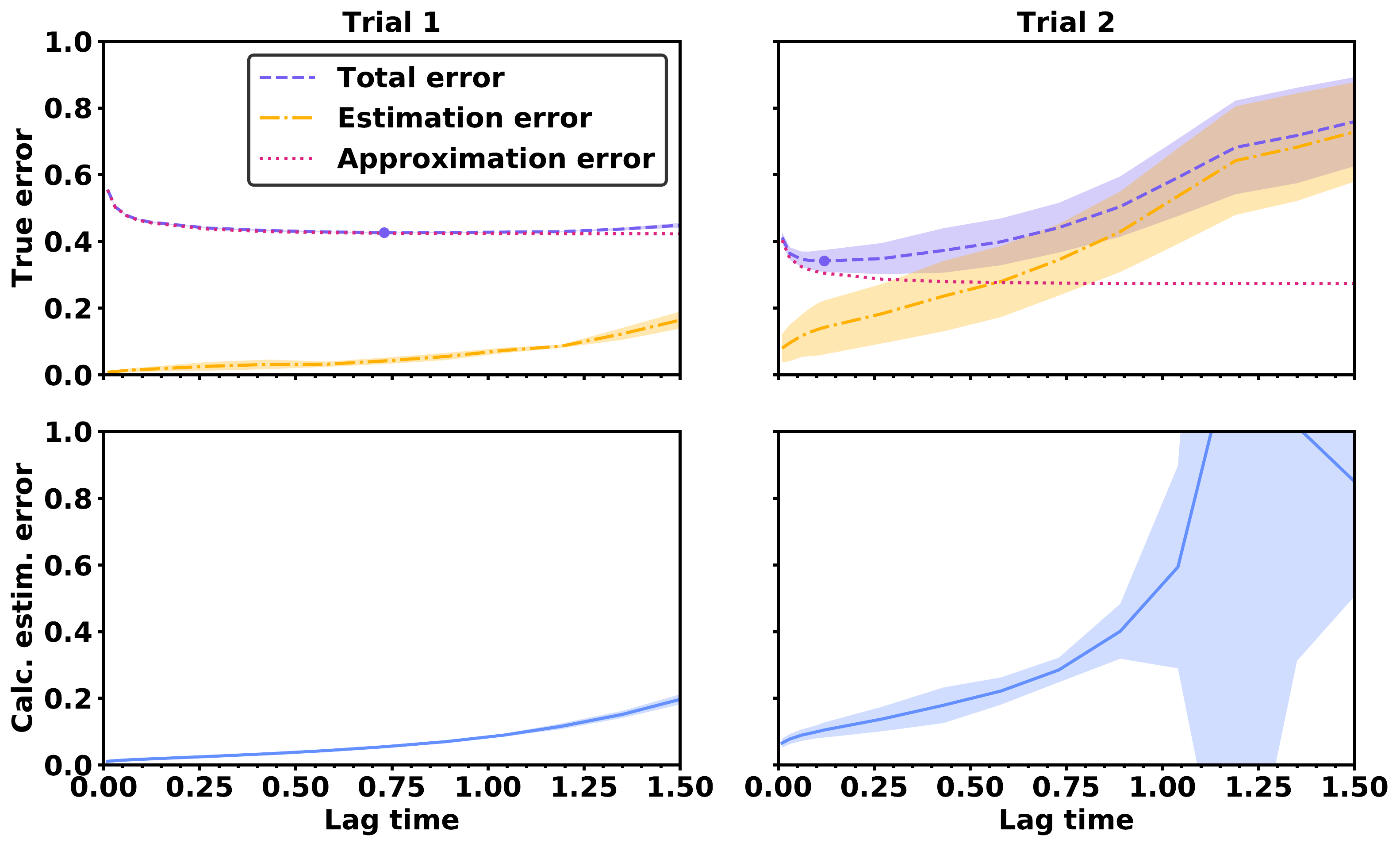}
    \caption{
    True error and calculated error for the OU process.
    Top: the bold line indicates the true root mean squared (RMS) error over $30$ independent trajectories,
    while the shaded region indicates the mean $\pm$ $1$ standard deviation.
    The purple dot shows the optimal lag time.
    Bottom: the calculated RMS estimation error
    obtained from each of the $30$ trajectories.
    \label{fig:OU_example}}
\end{figure}

In addition to showing the true error levels, 
\cref{fig:OU_example} shows the root mean squared estimation error calculated directly from the data.
In trial 1, the calculated estimation error remains below $0.2$ for lag times up to $1.5$.
Therefore, the VAC practitioner can infer that the 
data set is rich enough to take high lag times
without experiencing large estimation error.
However, in trial 2,
the calculated estimation rises more rapidly, 
reaching a level of $0.2$ when the lag time is just $\tau \approx 0.5$.
In this case, the VAC practitioner can infer that the 
data set is not rich enough to take $\tau > 0.5$.


An alternative lag time selection strategy
called \emph{implied timescale analysis}
has been advocated in the past by VAC researchers \cite{swope2004describing}.
In this strategy, the VAC eigenvalues are used to compute the implied timescales
\begin{equation}
    \frac{-\tau}{\log\left(\hat{\lambda}_i^{\tau}\right)}.
\end{equation}
If VAC eigenvalues are perfect estimates of the true eigenvalues, then implied timescales are perfectly flat
and they equal $\sigma_i^{-1}$.
In practice, however,
implied timescales are not flat.
They
increase quickly at short lag times and then increase more slowly at long lag times.
To cut down on VAC's error,
Swope and coauthors \cite{swope2004describing}
proposed selecting a long enough lag time
so that the implied timescales for the eigenfunctions of interest
are approximately level.

\Cref{fig:implied_timescales} presents the implied timescales for the OU process.
From the figure it is clear that the implied timescales cannot be used to assess the estimation error.
The estimation
error is much higher in the second trial, yet the implied timescales for trial 1 and trial 2 are
similar.
However, implied timescales may help assess the approximation error.
As the lag time is increased from $\tau = 0$
to $\tau = 0.1$, 
the second and third implied timescale become much flatter,
which provides an accurate indication that
the approximation error is decreasing and beginning to settle.

\begin{figure}[h!]
    \centering
    \includegraphics[scale = .4, clip]{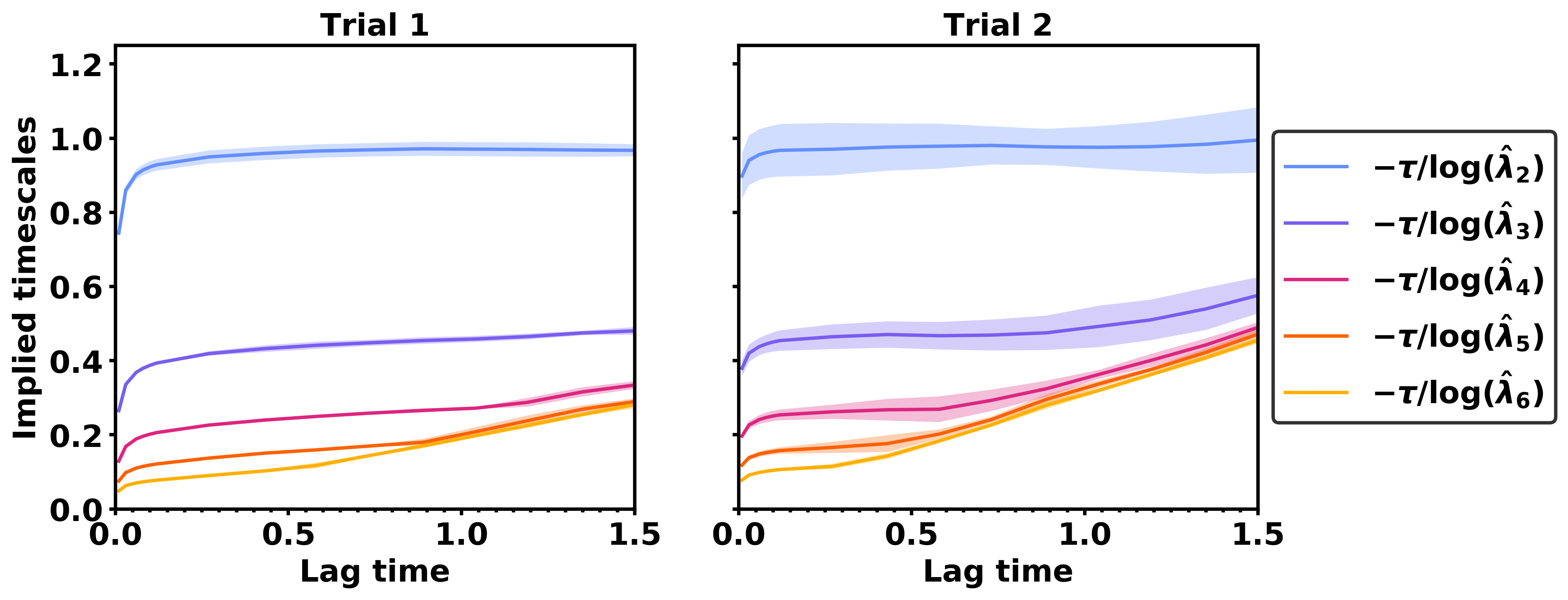}
    \caption{Implied timescales of the OU process.}
    \label{fig:implied_timescales}
\end{figure}

We conclude that implied timescale analysis may provide an approach for assessing approximation error that is complementary to our approach for assessing the estimation error.
Whereas our approach is useful for identifying and avoiding the error that is prevalent at long lag times,
implied timescale analysis may be useful for identifying and avoiding the error that is prevalent at short lag times.
However, while our approach for computing the mean squared estimation error is rigorously justified,
it remains an open research problem
to rigorously justify this proposed relationship between the implied timescales and the approximation error.



\subsection{Varying the size of the subspace}

In this second experiment, we apply VAC to estimate the eigenfunctions
of the diffusion process
\begin{equation}
    \mathop{dX} = - \frac{1}{2}\sigma \sigma^T \nabla U(X) \mathop{dt} + \sigma \mathop{dW},
\end{equation}
where the potential $U$ and the diffusion matrix 
$\sigma$ are given by:
\begin{align}
    & U(x_1, x_2) = 4 x_1^4 - 8 x_1^2 +  x_1 + 0.5 x_2^2,
    & \sigma = 
    \begin{pmatrix} 2 & 0 \\ -1 & \sqrt{3} \end{pmatrix}.
\end{align}
We simulate a stationary trajectory of length $T = 500$
and then apply VAC using the basis set
$\left\{1, x_1, x_2, x_1^2, x_1 x_2, x_2^2\right\}$.

We investigate how the accuracy changes
when VAC is used to estimate two subspaces of different sizes:
$\spa\left\{\eta_1, \eta_2\right\}$
and $\spa\left\{\eta_1, \eta_2, \eta_3\right\}$.
When estimating $\spa\left\{\eta_1, \eta_2\right\}$,
there is a wide range of lag times that 
all lead to low error levels. As seen in \cref{fig:double_well},
the total error decreases between lag times of $\tau = 0$ and $\tau = 0.2$, but it is nearly constant for all lag times between $\tau = 0.2$ to $\tau = 1.5$.
In contrast, when estimating $\spa\left\{\eta_1, \eta_2, \eta_3\right\}$,
the total error is V-shaped with a distinct minimum at the lag time $\tau = 0.2$.
The error rises rapidly as the lag time is increased beyond $\tau = 0.2$ due to an upsurge in the estimation error.

\begin{figure}[h!]
    \centering
    \includegraphics[scale = .35, clip]{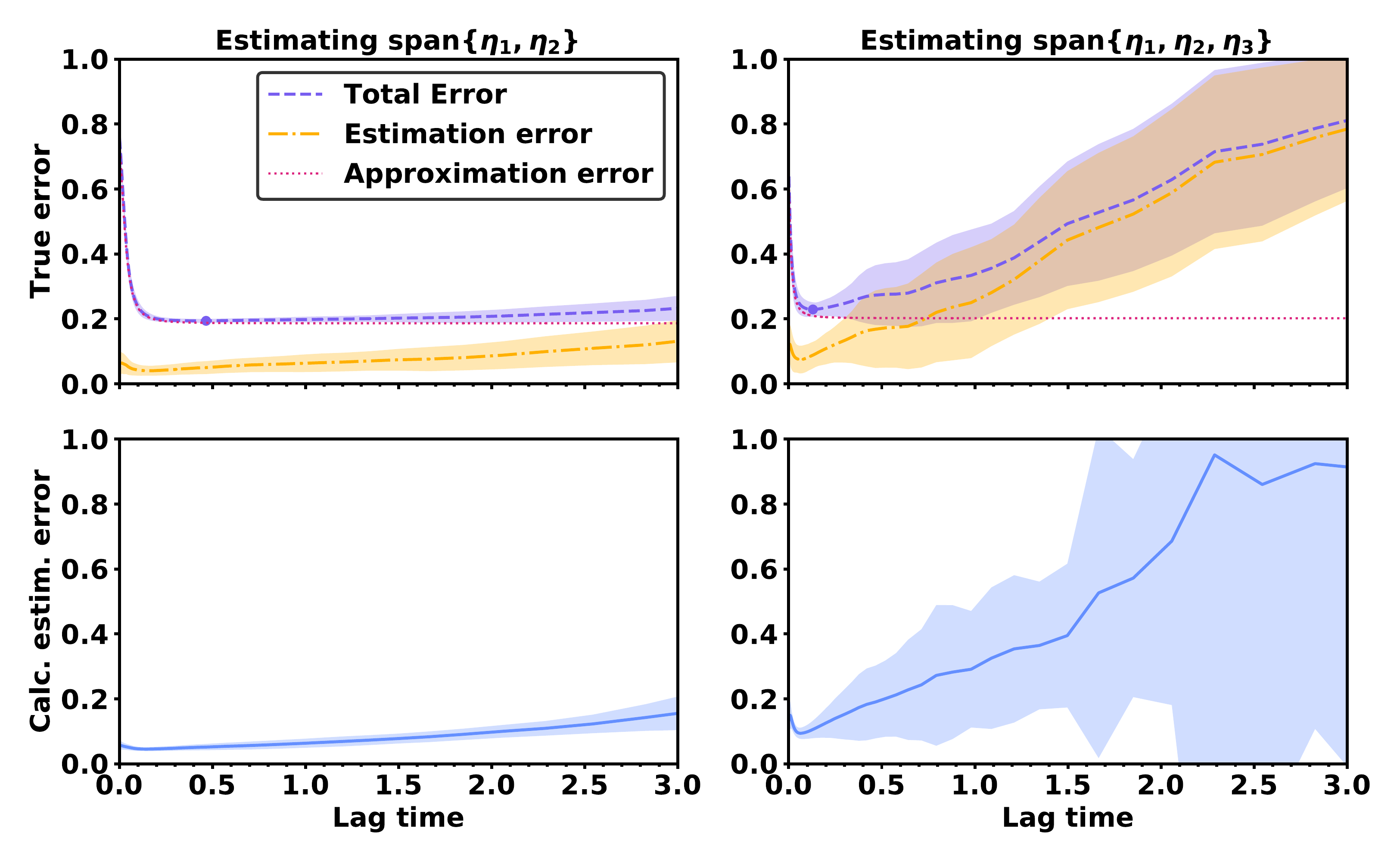}
    \caption{When the minimum condition number is $2.0$, estimation error is low (left). 
    When the minimum condition number is $9.5$, estimation error is much higher (right).}
    \label{fig:double_well}
\end{figure}


What explains the different error profiles when estimating the subspace $\spa\left\{\eta_1, \eta_2\right\}$ versus $\spa\left\{\eta_1, \eta_2, \eta_3\right\}?$
The explanation is not a difference in the data set or the basis set,
since these factors remain the same when estimating the two subspaces.
Rather, the increase in estimation error
is due to the much higher condition number for the subspace $\spa\left\{\eta_1, \eta_2, \eta_3\right\}$.
No matter how the lag time is selected,
the inverse spectral gap $\left(\lambda_4^\tau - \lambda_3^\tau\right)^{-1}$ is at least as high as $9.5$.
In contrast, when estimating the subspace $\spa\left\{\eta_1, \eta_2\right\}$,
the minimum condition number $\min_{\tau} \left(\lambda_3^\tau - \lambda_2^\tau\right)^{-1}$ is just $2.0$.
Here we see a high condition number is associated with increased levels of estimation error
and a stronger relationship between estimation error
and lag time.

To avoid situations where the estimation error is uncontrollably high,
VAC users should identify well-conditioned subspaces
and focus on estimating these subspaces whenever possible.
As shown in the VAC eigenvalue plot in 
\cref{fig:double_well_vac_evals},
eigenvalues for well-conditioned subspaces 
often visually stand apart from the rest of the eigenvalues.
The large gap between the second and third VAC 
eigenvalue indicates a natural separation in timescales,
which implies that $\spa\left\{\eta_1, \eta_2\right\}$ is a well-conditioned subspace.

\begin{figure}[h!]
    \centering
    \includegraphics[scale=0.4, clip]{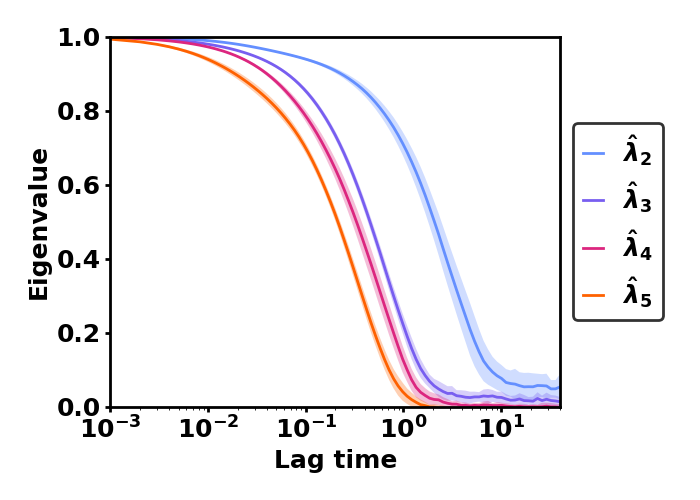}
    \caption{VAC eigenvalues.} 
    \label{fig:double_well_vac_evals}
\end{figure}


\section{Mathematical derivations}\label{sec:math}

In this section, we prove the mathematical results
presented in \cref{thm:remarkable}, \cref{thm:remarkable2}, \cref{thm:samplingError}, and \cref{thm:moments}.


\subsection{Building mathematical intuition}{\label{sub:intuition}}

Before proving \cref{thm:remarkable},
we identify the intuitive mathematical reason
why idealized VAC estimates converge at long lag times.
The intuitive reason for the convergence is revealed through a decomposition of the matrix $C\left(\tau\right)$.
Applying the spectral decomposition \eqref{eq:modified_spectral},
we find that each matrix entry $C_{ij}\left(\tau\right)$
has an exponentially decaying structure.
\begin{equation}
    C_{ij}\left(\tau\right) 
    = \left<\phi_i, T_\tau \phi_j\right>
    = \sum_{l=1}^r e^{-\sigma_l \tau}
    \left<\eta_l, \phi_i\right>
    \left<\eta_l, \phi_j\right> 
    + \mathcal{O}\left(e^{-\sigma_{r+1} \tau}\right),
    \quad \tau \rightarrow \infty.
\end{equation}
Thus, the matrix $C\left(\tau\right)$ is the sum of exponentially decaying rank-one matrices
\begin{align}
\label{eq:expansion}
    C\left(\tau\right) = \sum_{l = 1}^r e^{-\sigma_l \tau} \left<\eta_{l}, \vec{\phi}\right> \left<\eta_{l}, \vec{\phi}\right>^T
    + \mathcal{O}\left(e^{-\sigma_{r+1} \tau}\right),
    \quad \tau \rightarrow \infty,
\end{align}
where we have used the shorthand $\left<\eta_{l}, \vec{\phi}\right>$ 
to denote the vector with entries $\left<\eta_l, \phi_i\right>$.

To approximate the behavior of idealized VAC at long lag times,
we remove
the smallest terms in
the expansion \eqref{eq:expansion}
and replace $C\left(\tau\right)$ 
by the sum of $k$ rank-one matrices
\begin{equation}
    \sum_{l = 1}^k e^{-\sigma_l \tau} \left<\eta_{l}, \vec{\phi}\right> \left<\eta_{l}, \vec{\phi}\right>^T.
\end{equation}
When the rank-$k$ approximation 
is used in place of $C\left(\tau\right)$,
it results that the top $k$ idealized VAC eigenfunctions $\gamma_1^\tau, \ldots, \gamma_k^{\tau}$
span the subspace
\begin{equation}
\spa_{1 \leq i \leq k} q_i
= P_{\Phi} \spa_{1 \leq i \leq k} \eta_i.
\end{equation}
Therefore, the truncation argument helps intuitively explain the convergence of
idealized eigenfunctions $\gamma_i^\tau$ 
to orthogonalized projections $q_i$ as $\tau \rightarrow \infty$.
Our proofs in \cref{sub:eigenvalue_convergence},
\cref{sub:subspace_convergence}, and \cref{sub:error_multiplier}
essentially serve to justify the truncation argument
and to provide rigorous bounds on the convergence.


\subsection{Convergence of eigenvalues}{\label{sub:eigenvalue_convergence}}

In this section, we verify the statement in
\cref{thm:remarkable} that the $k$th idealized eigenvalue converges
\begin{equation}
\tag{\ref{eq:refined_eigenvalue}}
    \frac{\lambda_k^{\tau}}
    {e^{-\sigma_k \tau}} \rightarrow 
    \left<\eta_k, q_k\right>^2
\end{equation}
in the limit $\tau \rightarrow \infty$,
provided there is a gap between $\sigma_k$
and all other $\sigma_i$ values.
To prove this result, our main tool is the min-max principle for self-adjoint operators \cite{reed1978methods}.
\begin{lemma}{\label{lem:min-max}}
Consider a quasi-compact self-adjoint operator
\begin{equation}
    A = \sum\nolimits_{i=1}^r \lambda_i P_{\eta_i} + R.
\end{equation}
Here, $\eta_1, \eta_2, \ldots, \eta_r$ are orthonormal eigenfunctions of $A$
with eigenvalues
$\lambda_1 \geq \lambda_2 \geq \cdots \geq \lambda_r$,
and the spectrum of $R$ lies in $\left(-\infty, \lambda_r\right)$.
Then, for each $1 \leq k \leq r$, 
\begin{equation}
    \lambda_k\left(A\right) = \max_{\dim\left(\Eta\right) = k}
    \min_{\eta \in \Eta} \frac{\left<\eta, A \eta\right>}{\left<\eta, \eta\right>}
\end{equation}
\end{lemma}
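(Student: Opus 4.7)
The plan is to establish the two inequalities that make up the Courant--Fischer characterization: a lower bound on the max--min obtained by exhibiting an optimal $k$-dimensional subspace, and a matching upper bound obtained by a dimension-counting argument that produces a test vector orthogonal to the top $k-1$ eigenfunctions.

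For the lower bound, I would take the specific subspace $\mathrm{H}^\star = \spa\left(\eta_1, \ldots, \eta_k\right)$. Any $\eta \in \mathrm{H}^\star$ can be written $\eta = \sum_{i=1}^k c_i \eta_i$, and since the $\eta_i$ are orthonormal and the structure of the spectral decomposition in \cref{ass:quasicompact} forces $R$ to vanish on $\spa\left(\eta_1, \ldots, \eta_r\right)$, we obtain
\begin{equation}
    \left<\eta, A \eta\right> = \sum_{i=1}^k \lambda_i \left|c_i\right|^2 \geq \lambda_k \sum_{i=1}^k \left|c_i\right|^2 = \lambda_k \left<\eta, \eta\right>.
\end{equation}
Hence the min over $\mathrm{H}^\star$ is at least $\lambda_k$, and so is the max over all $k$-dimensional subspaces.

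For the upper bound, I would let $\mathrm{H}$ be an arbitrary $k$-dimensional subspace and invoke a dimension count: since $\mathrm{H}$ has dimension $k$ while the orthogonal projection onto $\spa\left(\eta_1, \ldots, \eta_{k-1}\right)$ has $(k-1)$-dimensional range, the kernel of that projection restricted to $\mathrm{H}$ is nontrivial. Pick a unit vector $\eta \in \mathrm{H}$ orthogonal to $\eta_1, \ldots, \eta_{k-1}$, and decompose $\eta = \sum_{i=k}^r c_i \eta_i + \eta^R$, where $\eta^R$ lies in the spectral subspace on which $R$ acts. Using $\lambda_r \leq \lambda_k$ and the spectral bound $\left<\eta^R, R \eta^R\right> \leq \lambda_r \left\lVert \eta^R \right\rVert^2$, I would estimate
\begin{equation}
    \left<\eta, A \eta\right> = \sum_{i=k}^r \lambda_i \left|c_i\right|^2 + \left<\eta^R, R \eta^R\right> \leq \lambda_k \sum_{i=k}^r \left|c_i\right|^2 + \lambda_k \left\lVert \eta^R \right\rVert^2 = \lambda_k.
\end{equation}
Thus the min over $\mathrm{H}$ is at most $\lambda_k$ for every $k$-dimensional $\mathrm{H}$, completing the characterization.

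The main obstacle is not conceptual but careful bookkeeping around the residual $R$: one must verify from the spectral theorem that the projections $P_{\eta_i}$ and $R$ act on mutually orthogonal subspaces (so that cross terms vanish and the Rayleigh quotient decouples as above), and that the bound $\left<\eta^R, R\eta^R\right> \leq \lambda_r \left\lVert \eta^R \right\rVert^2$ follows from the supremum of the spectrum of $R$ being at most $\lambda_r$ even though $\lambda_r$ itself may not be attained. Both facts are consequences of the quasi-compact structure of \cref{ass:quasicompact} and require no additional hypotheses.
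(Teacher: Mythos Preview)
Your proof is correct and follows the standard Courant--Fischer argument. The paper, however, does not prove this lemma at all: it simply states the result and cites it to Reed and Simon \cite{reed1978methods}, treating the min--max principle as a known tool rather than something to be established. Your write-up is essentially the textbook argument one would find in such a reference, so there is nothing to contrast.
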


Before applying the min-max principle, we derive two estimates.

\begin{proposition}{\label{prop:upper_bound}}
For any $\phi \in \Phi \cap 
\left(\spa_{1 \leq i \leq k-1} q_i\right)^{\perp}$,
\begin{equation}
\frac{\left<\phi, T_{\tau} \phi\right>}
{\left<\phi, \phi\right>}
\leq e^{-\sigma_k \tau} \left<\eta_k, q_k\right>^2 + e^{-\sigma_{k+1} \tau}.    
\end{equation}
\end{proposition}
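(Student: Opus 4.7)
The plan is to expand $\langle \phi, T_\tau \phi \rangle$ using the quasi-compact spectral decomposition in \cref{ass:quasicompact} and exploit the constraint $\phi \in \Phi \cap (\spa_{1 \leq i \leq k-1} q_i)^\perp$ to control each term. After normalizing so that $\|\phi\|=1$, the expansion reads
\[
\langle \phi, T_\tau \phi \rangle = \sum_{i=1}^{r} e^{-\sigma_i \tau}|\langle \eta_i, \phi\rangle|^2 + \langle \phi, R_\tau \phi\rangle.
\]
The first step is to observe that since $\phi \in \Phi$, each $\langle \eta_i, \phi\rangle$ equals $\langle P_\Phi \eta_i, \phi\rangle$; and for $i \leq k-1$, the Gram--Schmidt construction of \cref{def:orthogonal} places $P_\Phi \eta_i$ inside $\spa_{1 \leq j \leq k-1} q_j$, which is orthogonal to $\phi$ by hypothesis. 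Hence the first $k-1$ terms of the sum drop out, leaving only the $i = k$ term together with the tail $i \geq k+1$ and the residual.

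Next I would handle the $i=k$ term. Writing $P_\Phi \eta_k = \sum_{j=1}^{k-1} \langle q_j, \eta_k\rangle q_j + \|\tilde{q}_k\| q_k$ from \cref{def:orthogonal} and noting the identity $\|\tilde{q}_k\| = \langle \eta_k, q_k\rangle$ (a direct consequence of the Gram--Schmidt formulas, since $\langle q_k, \eta_k\rangle = \langle q_k, P_\Phi\eta_k\rangle = \|\tilde{q}_k\|$), the orthogonality of $\phi$ to $q_1, \dots, q_{k-1}$ gives $\langle \eta_k, \phi\rangle = \langle \eta_k, q_k\rangle \langle q_k, \phi\rangle$. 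The Cauchy--Schwarz bound $|\langle q_k, \phi\rangle| \leq \|q_k\|\|\phi\| = 1$ then yields $|\langle \eta_k, \phi\rangle|^2 \leq \langle \eta_k, q_k\rangle^2$, which produces the first term of the proposition's bound.

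For the remaining tail $\sum_{i=k+1}^r e^{-\sigma_i \tau}|\langle \eta_i, \phi\rangle|^2 + \langle \phi, R_\tau \phi\rangle$, I would bound every eigenvalue factor by $e^{-\sigma_{k+1}\tau}$ and use $\|R_\tau\|_2 \leq e^{-\sigma_{r+1}\tau} \leq e^{-\sigma_{k+1}\tau}$. Because $R_\tau$ annihilates $\eta_1, \dots, \eta_r$ and maps into their orthogonal complement, Parseval's identity gives
\[
\sum_{i=k+1}^r |\langle \eta_i, \phi\rangle|^2 + \Bigl\|\phi - \sum_{i=1}^r \langle \eta_i, \phi\rangle \eta_i\Bigr\|^2 \leq \|\phi\|^2 = 1,
\]
so the whole tail is bounded by $e^{-\sigma_{k+1}\tau}$. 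Combining the two contributions yields the claimed inequality. The only mildly subtle point is the identity $\|\tilde{q}_k\| = \langle \eta_k, q_k\rangle$, which is the hinge connecting the Gram--Schmidt construction to the stated right-hand side; everything else is a routine combination of the spectral decomposition and Bessel-type inequalities.
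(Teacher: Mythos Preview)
Your proposal is correct and follows essentially the same approach as the paper: both arguments drop the $i\le k-1$ terms via the Gram--Schmidt orthogonality, rewrite $\langle\eta_k,\phi\rangle=\langle\eta_k,q_k\rangle\langle q_k,\phi\rangle$, and bound the remaining tail by $e^{-\sigma_{k+1}\tau}\langle\phi,\phi\rangle$. The paper's write-up is more compressed (it bounds the tail directly via the operator norm of $\sum_{i\ge k+1}e^{-\sigma_i\tau}P_{\eta_i}+R_\tau$ rather than invoking Parseval explicitly), but the content is the same.
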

\begin{proof}
Calculate
\begin{align}
\left<\phi, T_{\tau}\phi\right>
&= \left<\phi, \left(\sum\nolimits_{i = k}^r e^{-\sigma_i \tau} P_{\eta_i} + R_{\tau}\right) \phi\right> \\
&= e^{-\sigma_k \tau} \left<\eta_k, \phi\right>^2 + 
\left<\phi,
\left(\sum\nolimits_{i = k + 1}^r e^{-\sigma_i \tau} P_{\eta_i} + R_{\tau}\right) 
\phi\right> \\
&\leq e^{-\sigma_k \tau} 
\left<\eta_k, q_k\right>^2
\left<q_k, \phi\right>^2
+ e^{-\sigma_{k+1} \tau} \left<\phi, \phi\right> \\
&\leq e^{-\sigma_k \tau} \left<\eta_k, q_k\right>^2 \left<\phi, \phi\right>
+ e^{-\sigma_{k+1} \tau} \left<\phi, \phi\right>.
\end{align}
\end{proof}

\begin{proposition}{\label{prop:lower_bound}}
Set $\Eta_{1:k-1} = \spa_{1 \leq i \leq k-1} \eta_i$.
Then for any $q \in Q_{1:k} = \spa_{1 \leq i \leq k} q_i$,
\begin{equation} 
    \frac{\left<q, T_{\tau} q\right>}{\left<q, q\right>} \geq
    e^{-\sigma_k \tau} \left(
    \left<\eta_k, q_k\right>^2
    - \frac{1}
    {e^{\left(\sigma_k -\sigma_{k-1}\right) \tau}
    \left(1 - d_2^2\left(\Eta_{1:k-1}, \Phi\right)\right)
    - \left<\eta_k, q_k\right>^2}
    \right),
\end{equation}
provided the denominator term is positive.
\end{proposition}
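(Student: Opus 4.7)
The plan is to reduce the inequality to a short algebraic optimization after extracting the essential spectral and Gram--Schmidt information. The intuition is that any $q \in Q_{1:k}$ decomposes uniquely as $q = c q_k + q'$ with $q' \in \spa(q_1,\ldots,q_{k-1}) = P_\Phi(\Eta_{1:k-1})$; the $\eta_k$-component of $q$ is essentially carried by $cq_k$, while the $\Eta_{1:k-1}$-components are essentially carried by $q'$ and get amplified by the factor $\alpha := e^{(\sigma_k-\sigma_{k-1})\tau}$ under $T_\tau$, which is what drives the error to zero.

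First, I would use the nonnegativity of $R_\tau$ together with the nonnegativity of the tail terms in the spectral decomposition (both immediate from \cref{ass:quasicompact} since $e^{-\sigma\tau}\geq 0$) to discard all spectral contributions with index $i > k$, and then apply $e^{-\sigma_i\tau} \geq e^{-\sigma_{k-1}\tau}$ for $i\leq k-1$ to obtain
\begin{equation*}
\langle q, T_\tau q\rangle \;\geq\; e^{-\sigma_k\tau}\langle q,\eta_k\rangle^2 + e^{-\sigma_{k-1}\tau}\|P_{\Eta_{1:k-1}} q\|^2.
\end{equation*}
Second, I would exploit two identities from \cref{def:orthogonal}: because $P_\Phi \eta_i \in \spa(q_1,\ldots,q_i) \subseteq Q_{1:k-1}$ for every $i<k$ and $q_k \perp Q_{1:k-1}$, one has $\langle q_k,\eta_i\rangle = \langle q_k, P_\Phi\eta_i\rangle = 0$ for $i<k$ and $\langle q_k,\eta_k\rangle = \|\tilde q_k\| =: \beta$. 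Writing $q = cq_k + q'$ with $c^2 + \|q'\|^2 = 1$, these identities give $\langle q,\eta_k\rangle = c\beta + \langle q',\eta_k\rangle$ and $P_{\Eta_{1:k-1}} q = P_{\Eta_{1:k-1}} q'$.

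Third, I would control how much $q'$ leaks out of $\Eta_{1:k-1}$: writing $q' = P_\Phi v$ for the unique $v\in \Eta_{1:k-1}$, the bound $\|q'-v\|^2 = \|P_{\Phi^\perp} v\|^2 \leq d_2^2\|v\|^2$ together with a short Cauchy--Schwarz argument yields both $\|P_{\Eta_{1:k-1}} q'\|^2 \geq (1-d_2^2)\|q'\|^2$ and $|\langle q',\eta_k\rangle| \leq d_2 \|q'\|$, where $d_2 := d_2(\Eta_{1:k-1},\Phi)$. Substituting these into the main inequality reduces the proposition to lower bounding $(c\beta + y)^2 + \alpha(1-d_2^2)\|q'\|^2$ subject to $|y| \leq d_2\|q'\|$ and $c^2 + \|q'\|^2 = 1$. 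I would handle the cross term via the weighted AM--GM inequality $(c\beta + y)^2 \geq c^2\beta^2/(1+\lambda) - y^2/\lambda$ valid for every $\lambda>0$; for each $\lambda$, the resulting lower bound is linear in $(c^2, \|q'\|^2)$, so its minimum is attained at an endpoint and equals $\min\bigl(\beta^2/(1+\lambda),\ \alpha(1-d_2^2) - d_2^2/\lambda\bigr)$. Writing $M := \alpha(1-d_2^2)$ and selecting $\lambda = d_2^2(M-\beta^2)/[(M-\beta^2)^2 + 1]$ delivers both endpoint values $\geq \beta^2 - 1/(M-\beta^2)$, which is the claimed bound.

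The main obstacle is the final bookkeeping: one must verify that the chosen $\lambda$ satisfies both endpoint inequalities simultaneously, which after simplification reduces to $(M-\beta^2)^2(d_2^2\beta^2 - 1) \leq 1 + d_2^2(M-\beta^2)$. Since $d_2^2\beta^2 \leq 1$ forces the left-hand side to be nonpositive, while the positivity hypothesis $M>\beta^2$ makes the right-hand side at least $1$, the inequality holds, and it is the only place where the hypothesis $\alpha(1-d_2^2) > \beta^2$ and the trivial bounds $\beta^2,d_2^2\leq 1$ enter the argument.
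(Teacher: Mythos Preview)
Your proof is correct and follows the paper's approach through the key structural steps: the decomposition $q = cq_k + q'$ with $q' \in Q_{1:k-1}$, the spectral lower bound $\langle q, T_\tau q\rangle \geq e^{-\sigma_k\tau}\langle q,\eta_k\rangle^2 + e^{-\sigma_{k-1}\tau}\|P_{\Eta_{1:k-1}} q\|^2$, the orthogonality $q_k \perp \Eta_{1:k-1}$, and the estimate $\|P_{\Eta_{1:k-1}} q'\|^2 \geq (1-d_2^2)\|q'\|^2$ are exactly what the paper uses.

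The only genuine difference is the final algebraic minimization. The paper recognizes the lower bound as a quadratic form $(a,b)\,M\,(a,b)^T$ for an explicit $2\times 2$ symmetric matrix and then applies the one-line bound $\lambda_{\min}\!\left(\begin{smallmatrix}A & B\\ B & C\end{smallmatrix}\right) \geq C - B^2/(A-C)$ (valid for $A>C$) from \cite{mathias1998quadratic}, followed by the crude estimates $|\langle\eta_k,q'\rangle|\leq 1$ and $|\langle\eta_k,q_k\rangle|\leq 1$. You instead introduce the extra variable $y = \langle q',\eta_k\rangle$, bound it by $|y|\leq d_2\|q'\|$, split the square via the weighted Young inequality with a free parameter $\lambda$, and then optimize over $\lambda$. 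Both routes deliver the identical inequality; the paper's is shorter, while yours makes the interplay between $d_2$, $\beta$, and $\alpha$ more explicit and in fact uses a sharper intermediate estimate on $|\langle q',\eta_k\rangle|$ than the paper needs. One small remark: your derivation of $\|P_{\Eta_{1:k-1}}q'\|^2 \geq (1-d_2^2)\|q'\|^2$ and $|\langle q',\eta_k\rangle|\leq d_2\|q'\|$ ultimately rests on the symmetry $d_2(Q_{1:k-1},\Eta_{1:k-1}) = d_2(\Eta_{1:k-1},Q_{1:k-1})$ for equal-dimensional subspaces rather than directly on the representation $q' = P_\Phi v$ (which by itself only gives the weaker $\|P_{\Phi^\perp}v\|\leq d_2\|v\|$); the paper also invokes this standard fact without comment.
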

\begin{proof}
It suffices to consider the $\left\lVert q \right\rVert = 1$ case.
Then, $q$
can be decomposed as 
$q = a q^\prime + b q_k$
where 
$a^2 + b^2 = 1$,
$q^{\prime} \in Q_{1:{k-1}}$, and
$\left\lVert q^{\prime} \right\rVert = 1$.
It follows
\begin{align}
\left<q, T_{\tau} q\right>
& \geq \left<q, \left(e^{-\sigma_{k-1} \tau} P_{\Eta_{1:k-1}} + e^{-\sigma_k \tau} P_{\eta_k}\right) q\right> \\
& = a^2 e^{-\sigma_{k-1} \tau} \left\lVert P_{\Eta_{1:k-1}} q^{\prime}\right\rVert^2
+ e^{-\sigma_k \tau} \left<\eta_k, a q^\prime + b q_k\right>^2.
\end{align}
Thus, $\left<q, T_{\tau} q\right>$ is bounded from below by the lowest eigenvalue of
\begin{equation}
M = e^{-\sigma_{k-1} \tau} \left\lVert P_{\Eta_{1:k-1}} q^{\prime}\right\rVert^2
\begin{pmatrix}
1 & 0 \\
0 & 0
\end{pmatrix}
+
e^{-\sigma_k \tau}
\begin{pmatrix}
\left<\eta_k, q^\prime\right>^2
& \left<\eta_k, q^{\prime}\right>
\left<\eta_k, q_k\right> \\
\left<\eta_k, q^{\prime}\right> \left<\eta_k, q_k \right>
& \left<\eta_k, q_k\right>^2
\end{pmatrix}.
\end{equation}
For any symmetric real-valued matrix $M = \begin{pmatrix} a & b \\ b & c \end{pmatrix}$ with $a > c$,
the lowest eigenvalue is at least as large
as $c - b^2 \slash \left(a - c\right)$
\cite{mathias1998quadratic}.
We can check that
\begin{equation}
    \left\lVert P_{\Eta_{1:k-1}} q^{\prime}\right\rVert^2
    = 1 - \left\lVert P_{\Eta_{1:k-1}^{\perp}} q^{\prime}\right\rVert^2
    \geq 1 - d_2^2\left(\Eta_{1:k-1}, \Phi\right).
\end{equation}
Therefore, the lowest eigenvalue of the matrix $M$
is at least as large as
\begin{equation}
e^{-\sigma_k \tau} \left<\eta_k, q_k\right>^2
- \frac{e^{-2\sigma_k \tau}}
{e^{-\sigma_{k-1} \tau}
\left(1 - d_2^2\left(\Eta_{1:k-1}, \Phi\right)\right)
- e^{-\sigma_k \tau} \left<\eta_k, q_k\right>^2}.
\end{equation}
\end{proof}

\begin{proof}[Proof of \eqref{eq:refined_eigenvalue}]
Using the min-max principle and \cref{prop:upper_bound},
\begin{equation}
    \lambda_k^{\tau}
    = \max_{\dim\left(\Eta\right) = k, \Eta \subseteq \Phi}
    \min_{\eta \in \Eta} \frac{\left<\eta, T_{\tau} \eta\right>}{\left<\eta, \eta\right>}
    \leq e^{-\sigma_k \tau} \left<\eta_k, q_k\right>^2\left(1 + o\left(1\right)\right)
\end{equation}
as $\tau \rightarrow \infty$.
Using the min-max principle and \cref{prop:lower_bound},
\begin{equation}
    \lambda_k^{\tau} 
    = \max_{\dim\left(\Eta\right) = k, \Eta \subseteq \Phi}
    \min_{\eta \in \Eta} \frac{\left<\eta, T_{\tau} \eta\right>}{\left<\eta, \eta\right>}
    \geq e^{-\sigma_k \tau} \left<\eta_k, q_k\right>^2\left(1 + o\left(1\right)\right).
\end{equation}
\end{proof}


\subsection{Convergence of invariant subspaces}{\label{sub:subspace_convergence}}

In this section, we verify the statement in
\cref{thm:remarkable}
that the subspace $\spa_{j \leq i \leq k} \gamma_i^{\tau}$
of idealized VAC eigenfunctions converges
\begin{equation}
\spa_{j \leq i \leq k} \gamma_i^{\tau}
\rightarrow \spa_{j \leq i \leq k} q_i
\tag{\ref{eq:remarkable_subspaces}}
\end{equation}
in the limit $\tau \rightarrow \infty$,
provided there is a gap between $\left\{\sigma_j, \ldots, \sigma_k\right\}$ and all other $\sigma_i$ values.
To prove this result,
our main tool is a well-known lemma due to Davis and Kahan \cite{davis1970rotation}.

\begin{lemma}{\label{lem:davis}}
Suppose $A$ and $B$ are self-adjoint operators
and $\mathcal{U}$ and $\mathcal{W}$ are closed subspaces.
If the spectrum of $P_{\mathcal{U}} \left.A\right|_{\mathcal{U}}$
lies in the interval $\left[a, b\right]$
and the spectrum of $P_{\mathcal{W}} \left.B\right|_{\mathcal{W}}$
lies in $\left(-\infty, a-\delta\right] \cup \left[b + \delta, \infty\right)$,
\begin{equation}
\label{eq:davis_inequality}
    \delta \left\lVert P_{\mathcal{W}} P_{\mathcal{U}} \right\rVert_{\F} \leq \left\lVert P_{\mathcal{W}} P_{\mathcal{U}} A P_{\mathcal{U}}
    - P_{\mathcal{W}} B P_{\mathcal{W}} P_{\mathcal{U}} \right\rVert_{\F}.
\end{equation}
\end{lemma}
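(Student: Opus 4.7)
\medskip

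\noindent\textbf{Proof proposal.} The plan is to expand both sides of \eqref{eq:davis_inequality} in terms of the spectral decompositions of the restricted self-adjoint operators $P_{\mathcal{U}} A P_{\mathcal{U}}$ and $P_{\mathcal{W}} B P_{\mathcal{W}}$, and then exploit the Hilbert-Schmidt orthogonality of spectral projections so that all cross terms drop out, leaving a single diagonal sum in which the gap $\delta$ appears as a uniform lower bound.

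First, I would invoke the spectral theorem to write
\begin{equation}
P_{\mathcal{U}} A P_{\mathcal{U}} = \sum_i \alpha_i E_i, \qquad P_{\mathcal{W}} B P_{\mathcal{W}} = \sum_j \beta_j F_j,
\end{equation}
where the $E_i$ are mutually orthogonal projections in $\mathcal{U}$ with $\sum_i E_i = P_{\mathcal{U}}$ and $\alpha_i \in [a,b]$, and similarly the $F_j$ are mutually orthogonal projections in $\mathcal{W}$ with $\sum_j F_j = P_{\mathcal{W}}$ and $\beta_j \in (-\infty, a-\delta]\cup[b+\delta,\infty)$. (In the general infinite-dimensional case, replace sums by projection-valued measures; the algebra below is unchanged.) Multiplying on the appropriate sides then yields
\begin{equation}
P_{\mathcal{W}} P_{\mathcal{U}} A P_{\mathcal{U}} - P_{\mathcal{W}} B P_{\mathcal{W}} P_{\mathcal{U}}
= \sum_{i,j} (\alpha_i - \beta_j)\, F_j E_i.
\end{equation}

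The key computation is a Frobenius inner-product identity. Using $F_j F_{j'} = \delta_{jj'} F_j$, $E_i E_{i'} = \delta_{ii'} E_i$, and cyclicity of the trace, I would verify that
\begin{equation}
\langle F_{j'} E_{i'}, F_j E_i\rangle_{\F} = \operatorname{tr}(E_{i'} F_{j'} F_j E_i) = \delta_{jj'}\delta_{ii'} \|F_j E_i\|_{\F}^2.
\end{equation}
Hence the double sum is Frobenius-orthogonal and
\begin{equation}
\|P_{\mathcal{W}} P_{\mathcal{U}} A P_{\mathcal{U}} - P_{\mathcal{W}} B P_{\mathcal{W}} P_{\mathcal{U}}\|_{\F}^2
= \sum_{i,j} (\alpha_i - \beta_j)^2 \|F_j E_i\|_{\F}^2,
\end{equation}
while by the same token $\|P_{\mathcal{W}} P_{\mathcal{U}}\|_{\F}^2 = \sum_{i,j}\|F_j E_i\|_{\F}^2$. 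The spectral separation assumption gives $|\alpha_i - \beta_j| \geq \delta$ for all pairs, and the claimed inequality follows termwise.

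The main technical obstacle is the rigorous handling of the general quasi-compact (not necessarily discrete) case: one must work with projection-valued measures $E(d\lambda)$ supported in $[a,b]$ and $F(d\mu)$ supported in the complement of $(a-\delta, b+\delta)$, and justify the double operator-valued integral representation of the difference together with the Frobenius-norm Fubini step. Once that formalism is in place, the same orthogonality computation applied to $E(d\lambda)$ and $F(d\mu)$ carries the proof through, and I do not anticipate any serious difficulty beyond bookkeeping.
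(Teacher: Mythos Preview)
Your argument is correct and is essentially the standard Davis--Kahan computation: decompose via the spectral projections of the two restricted operators, observe that the operators $F_j E_i$ are Hilbert--Schmidt orthogonal, and bound termwise using $|\alpha_i - \beta_j|\geq \delta$. Your handling of the continuous spectrum via projection-valued measures is also the right generalization.

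Note, however, that the paper does not actually prove this lemma. It is stated as a well-known result and attributed directly to Davis and Kahan \cite{davis1970rotation}, so there is no ``paper's own proof'' to compare against. Your write-up supplies precisely the classical argument that underlies the cited reference, so there is no discrepancy in approach---only that you have filled in what the paper deliberately left as a citation.
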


The Davis and Kahan lemma leads
to the following error bound.

\begin{proposition}{\label{prop:mainbound}}
When $\lambda_k^{\tau} > e^{-\sigma_{k+1}}$, the distance between subspaces
$\Gamma_{1:k}^\tau = \spa_{1 \leq i \leq k} \gamma_i^{\tau}$
and
$Q_{1:k} = \spa_{1 \leq i \leq k} q_i$ is bounded by
\begin{equation}
\label{eq:mainbound}
    d_{\F}\left(\Gamma_{1:k}^\tau, Q_{1:k} \right)
    \leq \frac{e^{-\sigma_{k+1} \tau}}
    {2\left(\lambda_k^{\tau} - e^{-\sigma_{k+1} \tau}\right)}
    d_{\F}\left(\spa_{1 \leq i \leq k} \eta_i, \Phi\right).
\end{equation}
\end{proposition}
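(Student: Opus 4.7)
The natural approach is to apply Davis-Kahan (Lemma~\ref{lem:davis}) to $A = B = T_\tau$ with $\mathcal{U} = \Gamma_{1:k}^\tau := \spa_{1 \leq i \leq k}\gamma_i^\tau$ and $\mathcal{W} = \Phi \cap Q_{1:k}^\perp$. Because $\Gamma_{1:k}^\tau \subseteq \Phi$, the orthogonal decomposition $Q_{1:k}^\perp = \Phi^\perp \oplus (\Phi \cap Q_{1:k}^\perp)$ gives $\|P_\mathcal{W} P_\mathcal{U}\|_\F = \|P_{Q_{1:k}^\perp} P_{\Gamma_{1:k}^\tau}\|_\F = d_\F(\Gamma_{1:k}^\tau, Q_{1:k})$, so this is exactly the quantity we wish to bound.

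Next I would establish the two spectral bounds needed in Davis-Kahan. Since the $\gamma_i^\tau$ are eigenfunctions of $P_\Phi T_\tau P_\Phi$ with eigenvalues $\lambda_i^\tau$ and $\Gamma_{1:k}^\tau \subseteq \Phi$, the spectrum of $P_\mathcal{U} T_\tau|_\mathcal{U}$ is exactly $\{\lambda_1^\tau, \ldots, \lambda_k^\tau\} \subseteq [\lambda_k^\tau, 1]$. The key observation for $P_\mathcal{W} T_\tau|_\mathcal{W}$ is that any $f \in \Phi \cap Q_{1:k}^\perp$ satisfies $\langle f, \eta_i\rangle = \langle f, P_\Phi \eta_i\rangle = 0$ for $i \leq k$, because $f \in \Phi$ while $P_\Phi \eta_i \in Q_{1:k}$ by definition of $Q_{1:k}$. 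Inserting the spectral decomposition~\eqref{eq:modified_spectral} into the Rayleigh quotient then gives $\langle f, T_\tau f\rangle \leq e^{-\sigma_{k+1}\tau}\|f\|^2$, placing the spectrum of $P_\mathcal{W} T_\tau|_\mathcal{W}$ in $[0, e^{-\sigma_{k+1}\tau}]$. Davis-Kahan therefore applies with gap $\delta = \lambda_k^\tau - e^{-\sigma_{k+1}\tau}$, precisely the denominator in the claim.

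To bound the Davis-Kahan residual $\|P_\mathcal{W} P_\mathcal{U} T_\tau P_\mathcal{U} - P_\mathcal{W} T_\tau P_\mathcal{W} P_\mathcal{U}\|_\F$, I would use the $M$-invariance $P_\mathcal{U} M P_\mathcal{U} = M P_\mathcal{U}$ for $M := P_\Phi T_\tau P_\Phi$ together with the identity $P_\Phi = P_{Q_{1:k}} + P_\mathcal{W}$ to collapse the expression to $\|P_\mathcal{W} T_\tau P_{Q_{1:k}} P_\mathcal{U}\|_\F$. Now for any $q = P_\Phi \eta \in Q_{1:k}$ with $\eta \in \spa_{1 \leq i \leq k}\eta_i$, invariance of $\spa_{1\leq i \leq k}\eta_i$ under $T_\tau$ combined with $P_\mathcal{W}\eta_i = 0$ for $i \leq k$ forces $P_\mathcal{W} T_\tau q = -P_\mathcal{W} T_\tau (I-P_\Phi)\eta$. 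Applying the spectral decomposition of $T_\tau$ once more and again using $P_\mathcal{W}\eta_i = 0$ for $i \leq k$ yields $\|P_\mathcal{W} T_\tau (I-P_\Phi)\eta\| \leq e^{-\sigma_{k+1}\tau}\|(I-P_\Phi)\eta\|$. Summing squared contributions over a suitable orthonormal basis of $Q_{1:k}$ (e.g.~the singular-value basis for the map $P_\Phi : \spa_{1\leq i\leq k}\eta_i \to Q_{1:k}$) introduces the factor $d_\F(\spa_{1\leq i \leq k}\eta_i, \Phi)$.

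The principal obstacle will be recovering the factor of $\tfrac{1}{2}$ in the stated denominator. A direct chain of the above inequalities reproduces the correct $\tau$- and basis-dependence but a constant that is twice too large. Sharpening it should exploit either (i) that $P_{Q_{1:k}} P_\mathcal{U}$ is itself small in the directions where the residual is large---so that the product $P_\mathcal{W} T_\tau P_{Q_{1:k}} P_\mathcal{U}$ admits a tighter Frobenius estimate than $\|P_\mathcal{W} T_\tau P_{Q_{1:k}}\|_\F$ in isolation---or (ii) a symmetrization of the Davis-Kahan residual that distributes the perturbation between $\Gamma_{1:k}^\tau$ and $Q_{1:k}$ through a principal-angle decomposition. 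Once the constant is pinned down, substituting back into Davis-Kahan gives \eqref{eq:mainbound} directly, and the convergence statement \eqref{eq:remarkable_subspaces} follows because $e^{-\sigma_{k+1}\tau}/(\lambda_k^\tau - e^{-\sigma_{k+1}\tau}) \to 0$ whenever $\sigma_{k+1} > \sigma_k$.
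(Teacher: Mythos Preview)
Your Davis--Kahan setup, the spectral localization of $P_{\mathcal U}T_\tau|_{\mathcal U}$ and $P_{\mathcal W}T_\tau|_{\mathcal W}$, and the reduction of the residual to $\|P_{\mathcal W}T_\tau P_{Q_{1:k}}\|_{\F}$ all coincide with the paper's proof. You also correctly isolate the factor $\tfrac12$ as the only missing ingredient. The gap is that neither of your proposed sharpenings is the right one: route~(i) fails because $P_{Q_{1:k}}P_{\mathcal U}$ is close to $P_{Q_{1:k}}$ precisely in the regime where the bound should be tight, so there is no smallness to exploit; route~(ii) via a principal-angle symmetrization is not how the constant is obtained either.

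The paper's device is a \emph{polarization identity}, exploiting the non-negativity of $T_\tau$ together with the orthogonality $\phi\perp q$. One first reduces to the pointwise estimate $|\langle \phi, T_\tau q\rangle|\le \tfrac12 e^{-\sigma_{k+1}\tau}$ for $\phi\in\Phi\cap Q_{1:k}^\perp$ with $\|\phi\|=1$ and $q\in Q_{1:k}$ with $\|P_{\Eta_{1:k}^\perp}q\|=1$. Since $\phi\in\Eta_{1:k}^\perp$ (your own observation) and $\Eta_{1:k}^\perp$ is $T_\tau$-invariant, $\langle\phi,T_\tau q\rangle=\langle\phi,T_\tau P_{\Eta_{1:k}^\perp}q\rangle$. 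Setting $a=\phi$ and $b=P_{\Eta_{1:k}^\perp}q$, both are unit vectors in $\Eta_{1:k}^\perp$ with $\langle a,b\rangle=\langle\phi,q\rangle=0$, hence $\|a\pm b\|^2=2$. Polarization then gives
\[
|\langle a,T_\tau b\rangle|=\tfrac14\bigl|\langle a{+}b,T_\tau(a{+}b)\rangle-\langle a{-}b,T_\tau(a{-}b)\rangle\bigr|\le\tfrac14\cdot 2e^{-\sigma_{k+1}\tau},
\]
because each quadratic form lies in $[0,2e^{-\sigma_{k+1}\tau}]$ by non-negativity of $T_\tau$ restricted to $\Eta_{1:k}^\perp$. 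The two properties doing the work---positivity of $T_\tau$ and $\phi\perp q$---are not invoked by either of your proposed routes, which is why they stall at the constant $1$ instead of $\tfrac12$.
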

\begin{proof}
The spectrum of 
$P_{\Phi \cap Q_{1:k}^{\perp}} \left.T_{\tau}\right|_{\Phi \cap Q_{1:k}^{\perp}}$
lies in the interval $\left[0, e^{-\sigma_{k+1} \tau}\right]$,
while the spectrum of $P_{\Gamma_{1:k}^\tau} \left.T_{\tau}\right|_{\Gamma_{1:k}^\tau}$
lies in the interval  $\left[\lambda_k^{\tau}, 1\right]$.
Therefore, the spectral gap is at least $\lambda_k^{\tau} - e^{-\sigma_{k+1} \tau}$.
We calculate
\begin{align}
    \left(\lambda_k^{\tau} - e^{-\sigma_{k+1} \tau}\right)
    \left\lVert
    P_{\Phi \cap Q_{1:k}^{\perp}}
    P_{\Gamma_{1:k}^\tau}
    \right\rVert_{\F}
    & \leq \left\lVert P_{\Phi \cap Q_{1:k}^{\perp}}
    P_{\Gamma_{1:k}^\tau} T_{\tau} P_{\Gamma_{1:k}^\tau}
    - P_{\Phi \cap Q_{1:k}^{\perp}}
    T_{\tau} 
    P_{\Phi \cap Q_{1:k}^{\perp}}
    P_{\Gamma_{1:k}^\tau} \right\rVert_{\F}
    \\
    & = \left\lVert P_{\Phi \cap Q_{1:k}^{\perp}}
    T_{\tau} P_{\Gamma_{1:k}^\tau}
    - P_{\Phi \cap Q_{1:k}^{\perp}}
    T_{\tau} 
    P_{\Phi \cap Q_{1:k}^{\perp}}
    P_{\Gamma_{1:k}^\tau} \right\rVert_{\F}
    \\
    & = \left\lVert P_{\Phi \cap Q_{1:k}^{\perp}}
    T_{\tau} P_{Q_{1:k}} P_{\Gamma_{1:k}^\tau} \right\rVert_{\F} \\
    & \leq \left\lVert P_{\Phi \cap Q_{1:k}^{\perp}}
    T_{\tau} P_{Q_{1:k}} \right\rVert_{\F}.
\end{align}
where we have used the fact that $\Gamma_{1:k}^\tau$ is an invariant subspace of $P_{\Phi} T_{\tau} P_{\Phi}$.
Next,
we introduce the subspace $\Eta_{1:k} = \spa_{1 \leq i \leq k} \eta_i$,
which
is orthogonal to $\Phi \cap Q_{1:k}^\perp$.
Then,
\begin{equation}
    \left\lVert
    P_{\Eta_{1:k}^\perp}
    P_{Q_{1:k}}
    \right\rVert_{\F}
    = \left\lVert
    P_{\Eta_{1:k}}
    P_{Q_{1:k}^\perp}\right\rVert_{\F} \\
    = \left\lVert
    P_{\Eta_{1:k}}
    P_{\Phi^\perp}\right\rVert_{\F} \\
    = d_{\F}\left(\spa_{1 \leq i \leq k} \eta_i, \Phi\right).
\end{equation}
To complete the theorem, it is enough to show
\begin{equation}
\label{eq:enough_to_show}
    \left\lVert 
    P_{\Phi \cap Q_{1:k}^{\perp}}
    T_{\tau} P_{Q_{1:k}} 
    \right\rVert_{\F}
    \leq 
    \frac{e^{-\sigma_{k+1} \tau}}{2} 
    \left\lVert
    P_{\Eta_{1:k}^\perp}
    P_{Q_{1:k}}
    \right\rVert_{\F}.
\end{equation}

To prove equation \eqref{eq:enough_to_show},
we apply a useful property of the Frobenius norm.
For bounded linear operators $A$ and $B$, 
if it is true that $\left\lVert Au \right\rVert \leq \left\lVert Bu \right\rVert$
for all $u$, 
then it follows that $\left\lVert A \right\rVert_{\F}
\leq \left\lVert B \right\rVert_{\F}$ \cite{horn2012matrix}.
Using this property, it is sufficient to prove
\begin{equation} 
    \left\lVert P_{\Phi \cap Q_{1:k}^{\perp}}
    T_{\tau} q\right\rVert
    \leq \frac{e^{-\sigma_{k+1} \tau}}{2} \left\lVert P_{\Eta_{1:k}^\perp} q
    \right\rVert,
    \quad q \in Q_{1:k}.
\end{equation}
Moreover, it is sufficient to prove that
\begin{equation}
\label{eq:inequality}
    \left|\left<\phi, T_t q\right>\right|
    \leq \frac{e^{-\sigma_{k+1} \tau}}{2},
\end{equation}
for all $\phi \in \Phi \cap Q_{1:k}^\perp$
and $q \in Q_{1:k}$ with $\left\lVert \phi \right\rVert
= \left\lVert P_{\Eta_{1:k}^{\perp}} q \right\rVert = 1$.
We observe
\begin{equation}
    \left\lVert P_{\Eta_{1:k}^{\perp}} \left(\phi \pm q\right) \right\rVert^2
    = \left\lVert \phi \pm P_{\Eta_{1:k}^{\perp}} q \right\rVert^2
    = 2 \pm 2\left< \phi, P_{\Eta_{1:k}^{\perp}} q\right>
    = 2 \pm 2\left< \phi, q\right>
    = 2.
\end{equation}
Using the polarization identity
and the fact that $\Eta_{1:k}^\perp$ is an invariant subspace of $T_{\tau}$, conclude
\begin{align}
    \label{eq:calc_start}
    \left|\left<\phi,
    T_{\tau}
    q\right>\right|
    &= \left|\left<P_{\Eta_{1:k}^{\perp}} \phi,
    T_{\tau} P_{\Eta_{1:k}^{\perp}}
    q\right>\right| \\
    &= \left|\frac{1}{4} \left<P_{\Eta_{1:k}^{\perp}}\left(\phi + q\right),
    T_{\tau} 
    P_{\Eta_{1:k}^{\perp}} \left(\phi + q\right)\right>
    -
    \frac{1}{4} \left<P_{\Eta_{1:k}^{\perp}}
    \left(\phi - q\right),
    T_{\tau} 
    P_{\Eta_{1:k}^{\perp}}
    \left(\phi - q \right)\right>\right|  \\
    \label{eq:calc_end}
    & \leq \frac{1}{2}
    \left\lVert P_{\Eta_{1:k}^{\perp}}
    T_{\tau}
    P_{\Eta_{1:k}^{\perp}}
    \right\rVert_2 \\
    & \leq \frac{e^{-\sigma_{k+1} \tau}}{2}.
\end{align}
\end{proof}

\begin{proof}[Proof of \cref{thm:remarkable2}]
When $\lambda_k^{\tau} > e^{-\sigma_{k+1}}$,
\cref{prop:mainbound} allows us to calculate
\begin{align}
    d_{\F}^2\left(\spa_{1 \leq i \leq k} \eta_i, \Phi\right)
    & \leq d_{\F}^2\left(\spa_{1 \leq i \leq k} \gamma_i^\tau, \spa_{1 \leq i \leq k} \eta_i \right) \\
    &= \left\lVert P_{\Phi^\perp} P_{\Eta_{1:k}} \right\rVert_{\F}^2
    + \left\lVert P_{\left(\Gamma_{1:k}^\tau\right)^\perp}
    P_{\Phi} P_{\Eta_{1:k}} \right\rVert_{\F}^2 \\
    &= \left\lVert P_{\Phi^\perp} P_{\Eta_{1:k}} \right\rVert_{\F}^2
    + \left\lVert P_{\left(\Gamma_{1:k}^\tau\right)^\perp}
    P_{Q_{1:k}} P_{\Eta_{1:k}} \right\rVert_{\F}^2 \\
    &\leq 
    \left(1 + \frac{1}{4} \left|\frac{e^{-\sigma_{k+1} \tau}}{\lambda_k^\tau - e^{-\sigma_{k+1} \tau}}\right|^2\right)
    d_{\F}^2\left(\spa_{1 \leq i \leq k} \eta_i, \Phi\right).
\end{align}
\end{proof}

\begin{proof}[Proof of \eqref{eq:remarkable_subspaces}]
When there is a gap between $\left\{\sigma_j, \ldots, \sigma_k\right\}$ and all other $\sigma_i$ values,
\cref{prop:mainbound} shows that
$\spa_{1 \leq i \leq j} \gamma_i^\tau \rightarrow \spa_{1 \leq i \leq k} q_i$ and $\spa_{1 \leq i \leq k} \gamma_i^\tau \rightarrow \spa_{1 \leq i \leq k} q_i$ in the limit $\tau \rightarrow \infty$.
By applying \cref{lem:quadratic}, we verify $\spa_{j \leq i \leq k} \gamma_i^{\tau} \rightarrow \spa_{j \leq i \leq k} q_i$.
\end{proof}


\subsection{Exponential speed of convergence}{\label{sub:error_multiplier}}

In this section, we verify the last statement in
\cref{thm:remarkable}
that the top $k$ idealized VAC eigenfunctions satisfy
\begin{equation}
    d_{\F}\left(\spa_{1 \leq i \leq k} \gamma_i^{\tau},
    \spa_{1 \leq i \leq k} q_i\right) \frac{\lambda_k^\tau}{\lambda_{k+1}^{\tau}}
    \rightarrow
    \left|\frac{\left<\eta_{k+1}, q_k\right>}
    {\left<\eta_{k+1}, q_{k+1}\right>}\right|
    \tag{\ref{eq:equivalent}}
\end{equation}
in the limit $\tau \rightarrow \infty$, provided there is a gap between $\sigma_k$ and 
all other $\sigma_i$ values and a gap between $\sigma_{k+1}$ and all other $\sigma_i$ values.

\begin{proof}[Proof of \cref{eq:equivalent}]
If there are fewer orthogonalized projection functions $\left(q_i\right)_{1 \leq i \leq p}$
compared to basis functions $\left(\phi_i\right)_{1 \leq i \leq n}$,
select
additional functions $\left(q_i\right)_{p + 1 \leq i \leq n}$
so that $\left(q_i\right)_{1 \leq i \leq n}$
is a complete orthonormal basis for $\Phi$.
Then,
\begin{align}
    d_{\F}\left(\spa_{1 \leq i \leq k} \gamma_i^{\tau},
    \spa_{1 \leq i \leq k} q_i\right) &= \left\lVert 
    \left(\sum\nolimits_{i = k+1}^n q_i \left<q_i, \cdot \right>\right)
    \left(\sum\nolimits_{j = 1}^k \gamma_j^\tau
    \left<\gamma_j^\tau, \cdot \right>\right)
    \right\rVert_{\F} \\
    &= \left\lVert 
    \sum\nolimits_{i = k+1}^n \sum\nolimits_{j = 1}^k
    q_i \left<q_i, \gamma_j^\tau\right> \left<\gamma_i^{\tau}, \cdot\right>
    \right\rVert_{\F} \\
    &= \left(\sum\nolimits_{i = k+1}^n \sum\nolimits_{j = 1}^k
    \left<q_i, \gamma_j^\tau\right>^2\right)^{1 \slash 2}.
\end{align}
The terms $\left<q_i, \gamma_j^\tau\right>$
are determined by the eigenvalue equation
\begin{equation}
\label{eq:eigenvalue_eqn}
    \lambda_j^\tau \left<q_i, \gamma_j^\tau\right>
    = \left<q_i, T_{\tau} \gamma_j^\tau\right>
    = \sum\nolimits_{l=1}^n \left<q_i, T_{\tau} q_l\right>
    \left<q_l, \gamma_j^\tau\right>.
\end{equation}
As $\tau \rightarrow \infty$,
\cref{thm:knyazev} and the calculations in \eqref{eq:calc_start}-\eqref{eq:calc_end} imply
\begin{equation}
    1 \slash \lambda_j^\tau
    = \mathcal{O}\left(
    e^{\sigma_j \tau}\right)
    \quad\text{and}\quad \left|\left<q_i, T_{\tau} q_l\right>\right| \leq \min\left\{e^{-\sigma_i \tau}, e^{-\sigma_l \tau}\right\}.
\end{equation}
Setting $\epsilon = \min\left\{\sigma_{k+2} - \sigma_k, \sigma_{k+1} - \sigma_{k-1}\right\}$ and sending $\tau \rightarrow \infty$, we find
\begin{align}
    d_{\F}\left(\spa_{1 \leq i \leq k} \gamma_i^{\tau},
    \spa_{1 \leq i \leq k} q_i\right)
    &= \left|\left<q_{k+1}, \gamma_k\right>\right|
    + \mathcal{O}\left(e^{-\epsilon \tau}\right) \\
    &= \left|\frac{1}{\lambda_k^\tau}
    \sum\nolimits_{l = 1}^n
    \left<q_{k+1}, T_{\tau} q_l\right>
    \left<q_l, \gamma_k^\tau\right>\right|
    + \mathcal{O}\left(e^{-\epsilon \tau}\right).
\end{align}
Lastly, using the facts that $\gamma_k^\tau \rightarrow q_k$ and $\lambda_{k+1}^\tau e^{\sigma_{k+1} \tau} \rightarrow \left<\eta_{k+1}, q_{k+1}\right>^2$,
\begin{align}
    d_{\F}\left(\spa_{1 \leq i \leq k} \gamma_i^{\tau},
    \spa_{1 \leq i \leq k} q_i\right) \frac{\lambda_k^\tau}{\lambda_{k+1}^{\tau}}
    &= \frac{\left<q_{k+1}, T_{\tau} q_k\right>}{\lambda_{k+1}^{\tau}} \left(1 + o\left(1\right)\right) \\
    &= \frac{e^{-\sigma_{k+1} \tau} \left<\eta_{k+1}, q_{k+1}\right> \left<\eta_{k+1}, q_k\right>}
    {e^{-\sigma_{k+1} \tau} \left<\eta_{k+1}, q_{k+1}\right>^2}
    \left(1 + o(1)\right) \\
    &= \frac{\left<\eta_{k+1}, q_k\right>}
    {\left<\eta_{k+1}, q_{k+1}\right>}
    \left(1 + o(1)\right).
\end{align}
\end{proof}


\subsection{Formulas for the estimation error}{\label{sub:matrix_perturbation}}

In this section, we verify the formulas for the estimation error 
that are given in \cref{thm:samplingError}.

\begin{proof}[Proof of \cref{thm:samplingError}]
First, define matrices
\begin{align}
    & \Lambda\left(\tau\right) = \diag \left\{
    \begin{pmatrix} \lambda_1^\tau & \cdots & \lambda_n^\tau
    \end{pmatrix} \right\},
    & V\left(\tau\right) = 
    \begin{pmatrix} 
    v_1\left(\tau\right) & \dots & v_n\left(\tau\right)
    \end{pmatrix}.
\end{align}
Due to the normalization $\delta_{ij} = \left<\gamma_i^\tau, \gamma_j^\tau\right>$,
we must have
\begin{align}
\label{eq:identities}
    & V\left(\tau\right)^T C\left(0\right) V\left(\tau\right) = I,
    & V\left(\tau\right)^T C\left(\tau\right) V\left(\tau\right) = \Lambda\left(\tau\right).
\end{align}
Therefore, idealized VAC eigenfunctions
$\gamma_i^{\tau}$
and eigenvalues $\lambda_i^{\tau}$
are the eigenfunctions and eigenvalues of the multiplication operator
\begin{equation}
    \sum\nolimits_{i,j=1}^n
    \gamma_i^{\tau}
    \Lambda_{ij}\left(\tau\right) \left<\gamma_j^\tau, \cdot \right>.
\end{equation}
In contrast, VAC eigenfunctions $\hat{\gamma}_i^{\tau}$
and eigenvalues $\hat{\lambda}_i^{\tau}$
are eigenfunctions and eigenvalues of
\begin{equation}
    \label{eq:structural_rep}
    \sum\nolimits_{i,j=1}^n
    \gamma_i^{\tau} \left(V\left(\tau\right)^{-1} \hat{C}\left(0\right)^{-1} \hat{C}\left(\tau\right) V\left(\tau\right)\right)_{ij}
    \left<\gamma_j^\tau, \cdot\right>.
\end{equation}
As $\hat{C}\left(0\right) \rightarrow C\left(0\right)$
and $\hat{C}\left(\tau\right) \rightarrow C\left(\tau\right)$,
we can calculate
\begin{align}
    & C\left(0\right) \hat{C}\left(0\right)^{-1} \hat{C}\left(\tau\right) \\
    &= \left[I + \left[\hat{C}\left(0\right) C\left(0\right)^{-1} - I\right]\right]^{-1} 
    \hat{C}\left(\tau\right) \\
    &= \hat{C}\left(\tau\right) - \left[\hat{C}\left(0\right) C\left(0\right)^{-1} - I\right] C\left(\tau\right)
    + \mathcal{O}\left(\left\lVert \hat{C}\left(0\right) - C\left(0\right)\right\rVert_{\F}^2
    + \left\lVert \hat{C}\left(\tau\right) - C\left(\tau\right)\right\rVert_{\F}^2\right) \\
    &= V\left(\tau\right)^{-T} \left[\Lambda\left(\tau\right) + \hat{L}\left(\tau\right)\right] V\left(\tau\right)^{-1} +
    \mathcal{O}\left( \left\lVert \hat{C}\left(0\right) - C\left(0\right)\right\rVert_{\F}^2
    + \left\lVert \hat{C}\left(\tau\right) - C\left(\tau\right)\right\rVert_{\F}^2\right),
\end{align}
where we have made repeated use of the identities \eqref{eq:identities}.
Multiplying on the left by $V\left(\tau\right)^T$ and on the right by $V\left(\tau\right)$, we find
that VAC eigenspaces are unitarily equivalent to the eigenspaces of the matrix operator
\begin{equation}
    V\left(\tau\right)^{-1} \hat{C}\left(0\right)^{-1} \hat{C}\left(\tau\right) V\left(\tau\right)
    = \Lambda\left(\tau\right) + \hat{L}\left(\tau\right) +
    \mathcal{O}\left( \left\lVert \hat{C}\left(0\right) - C\left(0\right)\right\rVert_{\F}^2
    + \left\lVert \hat{C}\left(\tau\right) - C\left(\tau\right)\right\rVert_{\F}^2\right),
\end{equation}
and the two operators share the same eigenvalues.
\cref{thm:samplingError}
then
follows by applying first-order perturbation bounds \cite{karow2014perturbation} for
eigenvalues and invariant subspaces
of a diagonal matrix $\Lambda\left(\tau\right)$
that is perturbed by a matrix $\hat{L}\left(\tau\right)$.
\end{proof}


\subsection{Distributional formulas for the estimation error}{\label{sub:variance}}

In this section, we verify the distributional formulas for the estimation error that are given in \cref{thm:moments}.

\begin{proof}[Proof of \cref{thm:moments}]

We fix the lag time $t \geq 0$ and the indices $1 \leq i,j \leq n$,
but allow the total trajectory length $T$ to vary.
Then, we write
\begin{align}
& \hat{C}_{ij}\left(t\right) = \frac{\Delta}{T - t} \sum_{s = 0}^{\frac{T - t}{\Delta} - 1} \phi_{ij}\left(X_{s \Delta}\right),
&
\phi_{ij}\left(x, y\right) = \frac{\phi_i\left(x\right) \phi_j\left(y\right) + \phi_i\left(y\right) \phi_j\left(x\right)}{2}.
\end{align}
As $T \rightarrow \infty$, we will proceed to show that 
$\sqrt{T} \left(\hat{C}_{ij}\left(t\right) - C_{ij}\left(t\right)\right)$ converges to an asymptotic normal distribution.

By assumption, $X_t$ is started from the stationary distribution $X_0 \sim \mu$,
so the random variables
$\left(\phi_{ij} \left(X_{s \Delta}, X_{s \Delta + \tau}\right)\right)_{s = 0, 1, \ldots}$ are strictly stationary \cite[pg. 230-231]{meyn2012markov}
with mean $C_{ij}\left(t\right)$.
Moreover, the conditional expectations $\E\left[\left. \phi_{ij}\left(X_{s\Delta}, X_{s\Delta + \tau}\right)\right| X_0 = x\right]$
satisfy
\begin{equation}
\label{eq:negligible}
    \left\lVert \E\left[\left. \phi_{ij}\left(X_{s\Delta}, X_{s\Delta + \tau}\right)\right| X_0 = x\right] - C_{ij}\left(t\right) \right\rVert \leq C e^{-\sigma_2 s \Delta},
    \quad s \geq 0,
\end{equation}
for a constant $C < \infty$ that is independent of $s$.
Condition \eqref{eq:negligible} is an asymptotic negligibility condition that guarantees the validity of the central limit theorem for
$\phi_{ij} \left(X_{s \Delta}, X_{s \Delta + \tau}\right)$.
Using the central limit theorem in \cite[ch. 5]{hall2014martingale},
we prove that
\begin{equation}
    \sqrt{T} \left(\hat{C}_{ij}\left(t\right) - C_{ij}\left(t\right)\right)
    \stackrel{\mathcal{D}}{\rightarrow} \N\left(0, \Delta \sum_{s = -\infty}^{\infty}
    \Cov_{\mu}\left[\phi_{ij}^\tau\left(X_0, X_\tau\right),
    \phi_{ij}^\tau\left(X_{s \Delta}, X_{s \Delta + \tau}\right)\right]\right).
\end{equation}

For simplicity, we have considered the asymptotic distribution of $\sqrt{T} \left(\hat{C}_{ij}\left(t\right) - C_{ij}\left(t\right)\right)$.
However, by the same approach we can prove the asymptotic normality of any linear combination of random variables $\sqrt{T} \left(\hat{C}_{ij}\left(t\right) - C_{ij}\left(t\right)\right)$ involving different values of $i$, $j$, and $t$.
By the Cram{\'e}r-Wold theorem \cite{kallenberg2006foundations}, therefore,
the array 
\begin{equation}
\left[\sqrt{T}\left(\hat{C}\left(0\right) - C\left(0\right)\right), \sqrt{T}\left(\hat{C}\left(\tau\right) - C\left(\tau\right)\right)\right]
\end{equation}
converges to a mean-zero multivariate normal distribution.

To complete the proof of \cref{thm:moments}, we then apply the ``delta method" \cite[pg. 26]{van2000asymptotic}
using the formulas \eqref{eq:eigenvalue_error} and \eqref{eq:asymptotic}.
Since $\sqrt{T} \left\lVert \hat{C}\left(0\right) - C\left(0\right)\right\rVert_{\F}^2$
and $\sqrt{T} \left\lVert \hat{C}\left(\tau\right) - C\left(\tau\right)\right\rVert_{\F}^2$
terms are $\mathcal{O}_p\left(\frac{1}{\sqrt{T}}\right)$ as $T \rightarrow \infty$, these terms are asymptotically negligible.
The matrix
\begin{equation}
    \hat{L}\left(\tau\right) = V\left(\tau\right)^T  \left(\hat{C}\left(\tau\right) - \hat{C}\left(0\right)\right) V\left(\tau\right) -
    V\left(\tau\right)^T \left(\hat{C}\left(0\right)
    - C\left(0\right)\right) V\left(\tau\right) \Lambda\left(\tau\right)
\end{equation}
is a linear combination
of matrices $\hat{C}\left(\tau\right) - C\left(\tau\right)$
and $\hat{C}\left(0\right) - C\left(0\right)$,
with each matrix entry $\hat{L}_{ij}\left(\tau\right)$ satisfying
\begin{equation}
    \sqrt{T} \hat{L}_{ij}\left(\tau\right) \stackrel{\mathcal{D}}{\rightarrow} \N\left(0, \Delta \sum_{s = -\infty}^{\infty}
    \Cov_{\mu}\left[F_{ij}^\tau\left(X_0, X_\tau\right),
    F_{ij}^\tau\left(X_{s \Delta}, X_{s \Delta + \tau}\right)\right]\right).
\end{equation}

\end{proof}



\section{Conclusions}
\label{sec:conclusions}

In this paper,
we have identified and bounded the major error sources of ``the variational approach to conformational dynamics" (VAC)  \cite{noe2013variational, chodera2014markov, noe2017collective, husic2018markov}.
VAC is frequently applied in biomolecular simulation studies to estimate the largest eigenvalues
$e^{-\sigma_1 \tau} \geq e^{-\sigma_2 \tau} \geq \cdots \geq e^{-\sigma_k \tau}$
for the Markov transition operator $T_{\tau}$,
along with the corresponding eigenfunctions
$\eta_1, \eta_2, \ldots, \eta_k$.

We have proved that VAC accurately identifies
subspaces of eigenfunctions $\spa_{j \leq i \leq k} \eta_i$
when three conditions are satisfied:
\begin{enumerate}[leftmargin = *]
    \item{\label{item:condition1}} The values $\left\{\sigma_j, \ldots, \sigma_k\right\}$ are separated from all other $\sigma_i$ values by a spectral gap.
    \item{\label{item:condition2}} The library of basis functions $\left(\phi_i\right)_{1 \leq i \leq n}$ becomes very rich so that linear combinations of basis functions can fully represent $\eta_1, \ldots, \eta_k$.
    \item{\label{item:condition3}} The data set becomes very large
    so that expectations $C_{ij}\left(0\right) = \E_{\mu}\left[\phi_i\left(X_0\right) \phi_j\left(X_0\right)\right]$
    and $C_{ij}\left(\tau\right) = \E_{\mu}\left[\phi_i\left(X_0\right) \phi_j\left(X_\tau\right)\right]$
    are evaluated with vanishing error.
\end{enumerate}

VAC converges for any value of the lag time parameter $\tau > 0$,
yet the choice of lag time can dramatically alter the speed of convergence.
Hence, our main contribution
is to prove error bounds
that explicitly show how error depends on the lag time.
These bounds provide a full theoretical justification for why
limitations in the basis set
contribute to the error at short lag times
and limitations in the data set
contribute to the to error at long lag times.

Our numerical analysis approach is flexible,
and it could be extended
to algorithms besides VAC that 
estimate dynamical quantities of interest
using trajectory data.
A broadly useful approach involves decomposing the total error into
approximation error and estimation error.
Another useful approach 
involves identifying asymptotic formulas for the estimation error.
In future research, it is our goal to rigorously analyze
the approximation and estimation error
for other powerful algorithms used in biochemical simulation 
(e.g., \cite{thiede2019galerkin}).

Lastly, while the main purpose of our work is to deepen theoretical understanding, 
we also provide diagnostic
tools to assess VAC's estimation error
and tune VAC's parameters to reduce this error source.
We present the \emph{VAC condition number} as a tool for identifying 
subspaces of VAC eigenfunctions that are prone to experiencing high estimation error.
We present the \emph{mean squared estimation error}
as a tool for calculating the estimation error
at different lag times.
Motivated by the present study, 
we have also developed an approach for reducing the lag time sensitivity and increasing VAC's robustness
by integrating over a window of lag times \cite{lorpaiboon2020integrated}.
These tools have
direct relevance to the
researchers using VAC,
pointing the way toward a more streamlined lag time selection process
and a more critical
assessment of VAC's error for the future.



\section*{Acknowledgments}
The authors would like to acknowledge helpful conversations with 
Gary Froyland, Chatipat Lorpaiboon and John Strahan.
We would also like to thank the anonymous reviewers for their helpful critiques and suggestions.


\bibliographystyle{siamplain}
\bibliography{references}

\begin{thebibliography}{10}

\bibitem{atchade2007geometric}
{\sc Y.~F. Atchad{\'e} and F.~Perron}, {\em On the geometric ergodicity of
  {M}etropolis-{H}astings algorithms}, Statistics, 41 (2007), pp.~77--84.

\bibitem{boninsegna2015investigating}
{\sc L.~Boninsegna, G.~Gobbo, F.~No{\'e}, and C.~Clementi}, {\em Investigating
  molecular kinetics by variationally optimized diffusion maps}, Journal of
  Chemical Theory and Computation, 11 (2015), pp.~5947--5960.

\bibitem{buchete2008peptide}
{\sc N.-V. Buchete and G.~Hummer}, {\em Peptide folding kinetics from replica
  exchange molecular dynamics}, Physical Review E, 77 (2008), p.~030902.

\bibitem{chen2019nonlinear}
{\sc W.~Chen, H.~Sidky, and A.~L. Ferguson}, {\em Nonlinear discovery of slow
  molecular modes using state-free reversible {V}{A}{M}{P}nets}, The Journal of
  Chemical Physics, 150 (2019), p.~214114.

\bibitem{chodera2014markov}
{\sc J.~D. Chodera and F.~No{\'e}}, {\em Markov state models of biomolecular
  conformational dynamics}, Current opinion in structural biology, 25 (2014),
  pp.~135--144.

\bibitem{davis1970rotation}
{\sc C.~Davis and W.~M. Kahan}, {\em The rotation of eigenvectors by a
  perturbation. {I}{I}{I}}, SIAM Journal on Numerical Analysis, 7 (1970),
  pp.~1--46.

\bibitem{djurdjevac2012estimating}
{\sc N.~Djurdjevac, M.~Sarich, and C.~Sch{\"u}tte}, {\em Estimating the
  eigenvalue error of {M}arkov state models}, Multiscale Modeling \&
  Simulation, 10 (2012), pp.~61--81.

\bibitem{edelman1998geometry}
{\sc A.~Edelman, T.~A. Arias, and S.~T. Smith}, {\em The geometry of algorithms
  with orthogonality constraints}, SIAM journal on Matrix Analysis and
  Applications, 20 (1998), pp.~303--353.

\bibitem{foreman2013emcee}
{\sc D.~Foreman-Mackey, D.~W. Hogg, D.~Lang, and J.~Goodman}, {\em emcee: the
  mcmc hammer}, Publications of the Astronomical Society of the Pacific, 125
  (2013), p.~306.

\bibitem{froyland2007detection}
{\sc G.~Froyland, K.~Padberg, M.~H. England, and A.~M. Treguier}, {\em
  Detection of coherent oceanic structures via transfer operators}, Physical
  review letters, 98 (2007), p.~224503.

\bibitem{hall2014martingale}
{\sc P.~Hall and C.~C. Heyde}, {\em Martingale limit theory and its
  application}, Academic press, 2014.

\bibitem{hirao1997molecular}
{\sc H.~Hirao, S.~Koseki, and H.~Takano}, {\em Molecular dynamics study of
  relaxation modes of a single polymer chain}, Journal of the Physical Society
  of Japan, 66 (1997), pp.~3399--3405.

\bibitem{horn2012matrix}
{\sc R.~A. Horn and C.~R. Johnson}, {\em Matrix analysis}, Cambridge university
  press, 2012.

\bibitem{husic2018markov}
{\sc B.~E. Husic and V.~S. Pande}, {\em Markov state models: From an art to a
  science}, Journal of the American Chemical Society, 140 (2018),
  pp.~2386--2396.

\bibitem{kallenberg2006foundations}
{\sc O.~Kallenberg}, {\em Foundations of modern probability}, Springer Science
  \& Business Media, 2006.

\bibitem{karow2014perturbation}
{\sc M.~Karow and D.~Kressner}, {\em On a perturbation bound for invariant
  subspaces of matrices}, SIAM Journal on Matrix Analysis and Applications, 35
  (2014), pp.~599--618.

\bibitem{klus2018data}
{\sc S.~Klus, F.~N{\"u}ske, P.~Koltai, H.~Wu, I.~Kevrekidis, C.~Sch{\"u}tte,
  and F.~No{\'e}}, {\em Data-driven model reduction and transfer operator
  approximation}, Journal of Nonlinear Science, 28 (2018), pp.~985--1010.

\bibitem{knyazev1985sharp}
{\sc A.~Knyazev}, {\em Sharp a priori error estimates of the {R}ayleigh-{R}itz
  method without assumptions of fixed sign or compactness}, Mathematical Notes,
  38 (1985), pp.~998--1002.

\bibitem{knyazev1997new}
{\sc A.~Knyazev}, {\em New estimates for {R}itz vectors}, Mathematics of
  computation, 66 (1997), pp.~985--995.

\bibitem{leimkuhler2013rational}
{\sc B.~Leimkuhler and C.~Matthews}, {\em Rational construction of stochastic
  numerical methods for molecular sampling}, Applied Mathematics Research
  eXpress, 2013 (2013), pp.~34--56.

\bibitem{liu2008monte}
{\sc J.~S. Liu}, {\em Monte {C}arlo strategies in scientific computing},
  Springer Science \& Business Media, 2008.

\bibitem{lorpaiboon2020integrated}
{\sc C.~Lorpaiboon, E.~H. Thiede, R.~J. Webber, J.~Weare, and A.~R. Dinner},
  {\em Integrated vac: A robust strategy for identifying eigenfunctions of
  dynamical operators}, arXiv preprint arXiv:2007.08027,  (2020).

\bibitem{mardt2017vampnets}
{\sc A.~Mardt, L.~Pasquali, H.~Wu, and F.~No{\'e}}, {\em {V}{A}{M}{P}nets for
  deep learning of molecular kinetics}, Nature Communications, 9 (2018), p.~5.

\bibitem{mathias1998quadratic}
{\sc R.~Mathias}, {\em Quadratic residual bounds for the hermitian eigenvalue
  problem}, SIAM journal on matrix analysis and applications, 19 (1998),
  pp.~541--550.

\bibitem{metropolis1953equation}
{\sc N.~Metropolis, A.~W. Rosenbluth, M.~N. Rosenbluth, A.~H. Teller, and
  E.~Teller}, {\em Equation of state calculations by fast computing machines},
  The Journal of Chemical Physics, 21 (1953), pp.~1087--1092.

\bibitem{meyn2012markov}
{\sc S.~P. Meyn and R.~L. Tweedie}, {\em Markov chains and stochastic
  stability}, Springer Science \& Business Media, 2012.

\bibitem{molgedey1994separation}
{\sc L.~Molgedey and H.~G. Schuster}, {\em Separation of a mixture of
  independent signals using time delayed correlations}, Physical Review
  Letters, 72 (1994), p.~3634.

\bibitem{naritomi2011slow}
{\sc Y.~Naritomi and S.~Fuchigami}, {\em Slow dynamics in protein fluctuations
  revealed by time-structure based independent component analysis: the case of
  domain motions}, The Journal of Chemical Physics, 134 (2011), p.~02B617.

\bibitem{noe2017collective}
{\sc F.~No{\'e} and C.~Clementi}, {\em Collective variables for the study of
  long-time kinetics from molecular trajectories: Theory and methods}, Current
  Opinion in Structural Biology, 43 (2017), pp.~141--147.

\bibitem{noe2013variational}
{\sc F.~No{\'e} and F.~Nuske}, {\em A variational approach to modeling slow
  processes in stochastic dynamical systems}, Multiscale Modeling \&
  Simulation, 11 (2013), pp.~635--655.

\bibitem{nuske2016variational}
{\sc F.~N{\"u}ske, R.~Schneider, F.~Vitalini, and F.~No{\'e}}, {\em Variational
  tensor approach for approximating the rare-event kinetics of macromolecular
  systems}, The Journal of Chemical Physics, 144 (2016), p.~054105.

\bibitem{nuske2017markov}
{\sc F.~N{\"u}ske, H.~Wu, J.-H. Prinz, C.~Wehmeyer, C.~Clementi, and
  F.~No{\'e}}, {\em Markov state models from short non-equilibrium
  simulations—analysis and correction of estimation bias}, The Journal of
  Chemical Physics, 146 (2017), p.~094104.

\bibitem{nuske2014variational}
{\sc F.~Nüske, B.~G. Keller, G.~P{\'e}rez-Hern{\'a}ndez, A.~S. Mey, and
  F.~Noé}, {\em Variational approach to molecular kinetics}, Journal of
  Chemical Theory and Computation, 10 (2014), pp.~1739--1752.

\bibitem{pan2008building}
{\sc A.~C. Pan and B.~Roux}, {\em Building markov state models along pathways
  to determine free energies and rates of transitions}, The Journal of chemical
  physics, 129 (2008), p.~064107.

\bibitem{perez2013identification}
{\sc G.~P{\'e}rez-Hern{\'a}ndez, F.~Paul, T.~Giorgino, G.~De~Fabritiis, and
  F.~No{\'e}}, {\em Identification of slow molecular order parameters for
  {M}arkov model construction}, The Journal of Chemical Physics, 139 (2013),
  p.~07B604\_1.

\bibitem{plattner2017complete}
{\sc N.~Plattner, S.~Doerr, G.~De~Fabritiis, and F.~No{\'e}}, {\em Complete
  protein--protein association kinetics in atomic detail revealed by molecular
  dynamics simulations and markov modelling}, Nature chemistry, 9 (2017),
  p.~1005.

\bibitem{reed1975methods}
{\sc M.~Reed and B.~Simon}, {\em Methods of modern mathematical physics. 1:
  {F}unctional analysis}, Academic Press, 1975.

\bibitem{reed1978methods}
{\sc M.~Reed and B.~Simon}, {\em Methods of modern mathematical physics. 4:
  {A}nalysis of operators}, Academic Press, 1978.

\bibitem{rowley2009spectral}
{\sc C.~W. Rowley, I.~Mezi{\'c}, S.~Bagheri, P.~Schlatter, and D.~S.
  Henningson}, {\em Spectral analysis of nonlinear flows}, Journal of fluid
  mechanics, 641 (2009), pp.~115--127.

\bibitem{saad2011numerical}
{\sc Y.~Saad}, {\em Numerical methods for large eigenvalue problems: revised
  edition}, vol.~66, Siam, 2011.

\bibitem{sarich2010approximation}
{\sc M.~Sarich, F.~No{\'e}, and C.~Sch{\"u}tte}, {\em On the approximation
  quality of {M}arkov state models}, Multiscale Modeling \& Simulation, 8
  (2010), pp.~1154--1177.

\bibitem{schmid2010dynamic}
{\sc P.~J. Schmid}, {\em Dynamic mode decomposition of numerical and
  experimental data}, Journal of Fluid Mechanics, 656 (2010), pp.~5--28.

\bibitem{schutte1999direct}
{\sc C.~Sch{\"u}tte, A.~Fischer, W.~Huisinga, and P.~Deuflhard}, {\em A direct
  approach to conformational dynamics based on hybrid {M}onte {C}arlo}, Journal
  of Computational Physics, 151 (1999), pp.~146--168.

\bibitem{schwantes2013improvements}
{\sc C.~R. Schwantes and V.~S. Pande}, {\em Improvements in {M}arkov state
  model construction reveal many non-native interactions in the folding of
  {NTL9}}, Journal of Chemical Theory and Computation, 9 (2013),
  pp.~2000--2009.

\bibitem{sokal1997monte}
{\sc A.~Sokal}, {\em Monte {C}arlo methods in statistical mechanics:
  foundations and new algorithms}, in Functional integration, Springer, 1997,
  pp.~131--192.

\bibitem{stewart2001matrix}
{\sc G.~W. Stewart}, {\em Matrix algorithms volume 2: eigensystems}, vol.~2,
  Siam, 2001.

\bibitem{swope2004describing}
{\sc W.~C. Swope, J.~W. Pitera, and F.~Suits}, {\em Describing protein folding
  kinetics by molecular dynamics simulations. 1. {T}heory}, The Journal of
  Physical Chemistry B, 108 (2004), pp.~6571--6581.

\bibitem{swope2004b}
{\sc W.~C. Swope, J.~W. Pitera, F.~Suits, M.~Pitman, M.~Eleftheriou, B.~G.
  Fitch, R.~S. Germain, A.~Rayshubski, T.~C. Ward, Y.~Zhestkov, et~al.}, {\em
  Describing protein folding kinetics by molecular dynamics simulations. 2.
  {E}xample applications to alanine dipeptide and a $\beta$-hairpin peptide},
  The Journal of Physical Chemistry B, 108 (2004), pp.~6582--6594.

\bibitem{taira2017modal}
{\sc K.~Taira, S.~L. Brunton, S.~T. Dawson, C.~W. Rowley, T.~Colonius, B.~J.
  McKeon, O.~T. Schmidt, S.~Gordeyev, V.~Theofilis, and L.~S. Ukeiley}, {\em
  Modal analysis of fluid flows: An overview}, Aiaa Journal, 55 (2017),
  pp.~4013--4041.

\bibitem{takano1995relaxation}
{\sc H.~Takano and S.~Miyashita}, {\em Relaxation modes in random spin
  systems}, Journal of the Physical Society of Japan, 64 (1995),
  pp.~3688--3698.

\bibitem{thiede2019galerkin}
{\sc E.~H. Thiede, D.~Giannakis, A.~R. Dinner, and J.~Weare}, {\em Galerkin
  approximation of dynamical quantities using trajectory data}, The Journal of
  Chemical Physics, 150 (2019), p.~244111.

\bibitem{tong1991indeterminacy}
{\sc L.~Tong, R.-W. Liu, V.~C. Soon, and Y.-F. Huang}, {\em Indeterminacy and
  identifiability of blind identification}, IEEE Transactions on circuits and
  systems, 38 (1991), pp.~499--509.

\bibitem{ulam1960collection}
{\sc S.~M. Ulam}, {\em A collection of mathematical problems}, vol.~8,
  Interscience Publishers, 1960.

\bibitem{van2000asymptotic}
{\sc A.~W. Van~der Vaart}, {\em Asymptotic statistics}, vol.~3, Cambridge
  university press, 2000.

\bibitem{vitalini2015basis}
{\sc F.~Vitalini, F.~No{\'e}, and B.~Keller}, {\em A basis set for peptides for
  the variational approach to conformational kinetics}, Journal of Chemical
  Theory and Computation, 11 (2015), pp.~3992--4004.

\bibitem{williams2015data}
{\sc M.~O. Williams, I.~G. Kevrekidis, and C.~W. Rowley}, {\em A data--driven
  approximation of the {K}oopman operator: {E}xtending dynamic mode
  decomposition}, Journal of Nonlinear Science, 25 (2015), pp.~1307--1346.

\bibitem{wu2017variational}
{\sc H.~Wu, F.~N{\"u}ske, F.~Paul, S.~Klus, P.~Koltai, and F.~No{\'e}}, {\em
  Variational {K}oopman models: slow collective variables and molecular
  kinetics from short off-equilibrium simulations}, The Journal of Chemical
  Physics, 146 (2017), p.~154104.

\end{thebibliography}


\section{Supplement}


\subsection{Figures for the Ornstein-Uhlenbeck process}{\label{sec:ou_figures}}

Here we provide additional information about how \cref{fig:stabilizes} - \cref{fig:implied_timescales} were generated.
These figures show VAC applied to the Ornstein-Uhlenbeck process
\begin{equation}
\mathop{dX} = -X \mathop{dt} + \sqrt{2} \mathop{dW}.
\end{equation}
started from the stationary $\N\left(0, 1\right)$ distribution.
The eigenfunctions of the transition operator $T_t$ are the Hermite polynomials
\begin{equation}
1, x, \frac{x^2 - 1}{\sqrt{2}}, 
\frac{x^3 - 3x}{\sqrt{6}}, \ldots
\end{equation}
with eigenvalues $1, e^{-t}, e^{-2t}, e^{-3t}, \ldots$.

The conditional distribution for the OU process is determined by
\begin{equation}
\Law\left(\left.X_t\right| X_0 = x\right) = \N\left(x e^{-t}, 1 - e^{-2t}\right).
\end{equation}
Therefore, we can simulate the OU process 
in discrete time using the exact evolution equations
\begin{equation}
X_{t + \Delta} = e^{-\Delta} X_t + \xi_t,
\quad \xi_t \sim \N\left(0, 1 - e^{-2 \Delta t}\right).
\end{equation}

When we apply VAC to the OU process,
we use a basis of $n$ indicator functions on disjoint intervals,
namely,
\begin{equation}
\left\{
\mathds{1}_{\left(-\infty, q_1\right)},
\mathds{1}_{\left[q_1, q_2\right)},
\mathds{1}_{\left[q_2, q_3\right)},
\ldots,
\mathds{1}_{\left[q_{n-1}, \infty \right)}
\right\}.
\end{equation}
The boundary points
\begin{equation}
q_0 = -\infty < q_1 < q_2 < \cdots < q_{n-1} < q_n = \infty
\end{equation}
are selected as follows:
\begin{enumerate}
	\item{\label{item:step1}}
	First, we set $q_i = \Phi^{-1}\left(i \slash n\right)$ where 
	\begin{equation}
	\Phi\left(x\right) = \int_{-\infty}^x \frac{e^{-y^2 \slash 2}}{\sqrt{2\pi}} \mathop{dy}
	\end{equation} is the cumulative distribution function for a standard normal random variable,
	and $\Phi^{-1}$ is the inverse cumulative distribution function, also called the quantile function.
	\item{\label{item:step2}}
	Next, we set $q_i \leftarrow q_i + \epsilon$, where $\epsilon$ is an offset parameter that is always $\epsilon = 0.1$ in our figures.
	The offset parameter helps make our examples
	realistic,
	since it would typically be impossible in VAC applications
	to identify quantiles of the equilibrium distribution exactly.
\end{enumerate}

Although many quantities involving the Ornstein-Uhlenbeck process
can be calculated analytically, we used numerical quadrature
to evaluate the integrals
\begin{equation}
\left<\eta_i, \phi_j\right> = \int_{q_{j-1}}^{q_j} 
\eta_i\left(x\right) \frac{e^{-x^2 \slash 2}}{\sqrt{2 \pi}} \mathop{dx}
\end{equation}
and 
\begin{equation}
\left<\phi_i, T_{\tau} \phi_j\right> = \int_{q_{j-1}}^{q_j} 
\left[\Phi\left(\frac{q_i - x e^{-\tau}}{\sqrt{1 - e^{-2 \tau}}}\right) - \Phi\left(\frac{q_{i-1} - x e^{-\tau}}{\sqrt{1 - e^{-2 \tau}}}\right)\right]
\frac{e^{-x^2 \slash 2}}{\sqrt{2 \pi}} \mathop{dx}.
\end{equation}



\subsection{Figures for the double well process}

Here we provide additional information about how \cref{fig:double_well} and \cref{fig:double_well_vac_evals} were generated.
These figures show VAC applied to the process
\begin{equation}
\mathop{dX} = - \frac{1}{2}\sigma \sigma^T \nabla U(X) \mathop{dt} + \sigma \mathop{dW},
\end{equation}
where the potential $U$ and the diffusion matrix 
$\sigma$ are given by:
\begin{align}
& U(x_1, x_2) = 4 x_1^4 - 8 x_1^2 +  x_1 + 0.5 x_2^2,
& \sigma = 
\begin{pmatrix} 2 & 0 \\ -1 & \sqrt{3} \end{pmatrix}.
\end{align}
$X_t$ is a double well process
that spends long time periods in potential wells near $\left(-1,0\right)$ and $\left(1,0\right)$
with rare transitions between wells.
We simulate $X_t$ using the BAOAB-limit integrator presented in Leimkuhler and Matthews \cite{leimkuhler2013rational} with the timestep $\Delta = 10^{-4}$.
We discard the first $t = 10$ time units of each trajectory
to reduce equilibration bias.

To calculate reference values for the true eigenfunctions $\eta_i$ and the idealized VAC matrices $C\left(\tau\right)$,
we use the numerical PDE approach from appendix D of reference \cite{thiede2019galerkin}.
We first construct a grid from $-2$ to $2$ in $x$ and from $-5$ to $5$ in $y$
with grid spacing of $\epsilon = \left( 6 \times 10^{-4} \right)^{-1 / 2}$.
We next construct the transition matrix $P$ for a hopping process
on a grid:
\begin{align}\label{eq:grid_hopping_probabilities}
P(x \pm \epsilon,y) &= \frac{1}
{6\left(1 + \exp\left[U(x\pm\epsilon,y) - U(x, y)\right]\right)}, \\
P(x, y \pm \epsilon) &= 
\frac{1}{6\left(1 + \exp\left[U(x,y\pm\epsilon) - U(x, y)\right]\right)}, \\
P(x\pm \epsilon,y\pm\epsilon) &= \frac{1}{6\left(1 + \exp\left[U(x\pm\epsilon,y\pm\epsilon) - U(x, y)\right]\right)}, \\
P(x \pm \epsilon, y \mp \epsilon) &= 0, \\
P(x,y) &= 1 - P(x + \epsilon, y) - P(x - \epsilon, y)
- P(x, y + \epsilon) - P(x, y - \epsilon).
\end{align}
In the $\epsilon \to 0$ limit, the action of $\frac{24}{\epsilon^2} \left( P - I \right)$ on smooth functions
approximates the action of the infinitesimal generator $L$
for the process $X_t$.

We calculate the eigenfunctions $\eta_i$
using eigenfunctions of $P$.
We calculate the idealized VAC matrix $C\left(\tau\right)$ using the approximation
\begin{align}
C_{ij}\left(\tau\right) 
&= \left<\phi_i, e^{L\tau} \phi_j\right> \\
&\approx \left<\phi_i, \left(I + \frac{\epsilon^2}{24} L\right)^{24 \tau \slash \epsilon^2} \phi_j\right> \\
&\approx \vec{\phi}_i^T D_{\mu} P^{24 \tau \slash \epsilon^2} \vec{\phi}_j,
\end{align}
where $\vec{\phi}_i$ is the vector of $\phi_i$ values evaluated at each gridpoint and $D_\mu$ is a diagonal matrix with the stationary measure at each gridpoint
along the diagonal.


\subsection{Mean squared estimation error}

In \cref{alg:errorcalc},
we describe the procedure
for calculating the mean squared estimation error using data.
The procedure is similar to the one used
to calculate
error bars in Markov Chain Monte Carlo.
Therefore, we take advantage of existing software
for Markov Chain Monte Carlo sampling \cite{foreman2013emcee}
in our implementation.

\begin{algorithm}[h!]
	\caption{Asymptotic estimation error}
	\label{alg:errorcalc}
	\begin{enumerate}[leftmargin = *]
		\item For $1 \leq i,j \leq n$, perform the following calculations.
		\begin{enumerate}
			\item Form the time series $\hat{F}_{ij}^{\tau}\left(X_{s \Delta}, X_{s \Delta + \tau}\right)$
			for $s = 0, 1, \ldots, \frac{T - \tau}{\Delta} - 1$,
			where the function $\hat{F}_{i j}^\tau\left(x, y\right)$
			is given by
			\begin{equation}
			\frac{\hat{\gamma}_{i}^\tau \left(x\right)
				\hat{\gamma}_j^\tau\left(y\right) +
				\hat{\gamma}_{i}^\tau\left(y\right)
				\hat{\gamma}_j^\tau\left(x\right)}{2}
			- \hat{\lambda}_j^\tau \frac{
				\hat{\gamma}_{i}^\tau\left(x\right)
				\hat{\gamma}_j^\tau\left(x\right) +
				\hat{\gamma}_{i}^\tau\left(y\right)
				\hat{\gamma}_j^\tau\left(y\right)}{2}.
			\end{equation}
			\item{\label{item:stepb}} For $s = 0, 1, \ldots, \frac{T - \tau}{\Delta} - 1$, calculate the autocovariance terms $\hat{R}_{ij}\left(s \Delta\right)$ given by
			\begin{equation}
			\frac{1}{\frac{T - \tau}{\Delta} - s} \sum_{r=0}^{\frac{T - \tau}{\Delta} - s - 1}
			\hat{F}_{i j}^{\tau}\left(X_{r \Delta}, X_{r \Delta + \tau}\right)
			\hat{F}_{i j}^{\tau}\left(X_{\left(r + s\right) \Delta}, X_{\left(r + s\right) \Delta + \tau}\right).
			\end{equation}
			\item{\label{item:stepc}} Use the approach in \cite[pg.143-145]{sokal1997monte} to
			determine a truncation threshold $K$
			such that $\hat{R}_{ij}\left(s \Delta\right) \approx 0$ for $s > K$, and set
			\begin{equation}
			\hat{V}_{ij}\left(\tau\right)^2
			=
			\frac{\Delta}{T}
			\left(1 + 2 \sum_{s = 1}^K
			\hat{R}_{ij}\left(s \Delta\right)\right).
			\end{equation}
		\end{enumerate}
		\item Estimate the mean squared estimation error using
		\begin{align}
		& \E\left|\hat{\lambda}_i^{\tau} - \lambda_i^{\tau}\right|^2
		\approx \hat{V}_{ii}\left(\tau\right)^2, \\
		& \E\left|d_{\F}\left(\spa_{j \leq i \leq k} \hat{\gamma}_i^{\tau}, \spa_{j \leq i \leq k} \gamma_i^{\tau}\right)\right|^2
		\approx 
		\sum_{\substack{l < j \\ \text{or } l > k}}
		\sum_{m = j}^k
		\frac{\hat{V}_{l m}\left(\tau\right)^2}
		{\left|\hat{\lambda}_l^{\tau} - \hat{\lambda}_m^{\tau}\right|^2}.
		\end{align}
	\end{enumerate}
\end{algorithm}


\subsection{Rayleigh-Ritz approximation bounds}

In this section, we re-derive the classic approximation bounds for the
Rayleigh-Ritz method first presented in \cite{knyazev1985sharp} and \cite[pg. 992]{knyazev1997new}.
Our first step is to verify the inequality
\begin{equation}
1 - d_2^2\left(\spa_{1 \leq i \leq k} \eta_i, \Phi\right)
\leq \frac{\lambda_k^{\tau}}{e^{-\sigma_k \tau}} \leq 1
\end{equation}
that appears in the statement of \cref{thm:knyazev}.
\begin{proof}[Proof of equation \eqref{eq:eigenvalue_bound}]
	As in the proof of \cref{thm:remarkable},
	the upper bound 
	\begin{equation}
	\lambda_k^{\tau}
	= \max_{\dim\left(\Eta\right) = k, \Eta \subseteq \Phi}
	\min_{\eta \in \Eta} \frac{\left<\eta, T_{\tau} \eta\right>}{\left<\eta, \eta\right>}
	\leq e^{-\sigma_k \tau}
	\end{equation}
	follows directly from the min-max principle.
	
	The lower bound on $\lambda_k^{\tau}$
	follows trivially if $d_2\left(\spa_{1 \leq i \leq k} \eta_i, \Phi\right) = 1$.
	If $d_2\left(\spa_{1 \leq i \leq k}, \Phi\right) < 1$,
	then we define subspaces
	$\Eta_{1:k} = \spa_{1 \leq i \leq k} \eta_i$ and $Q_{1:k} = P_{\Phi} \spa_{1 \leq i \leq k} \eta_i$.
	For any $q \in Q_{1:k}$ with $\left\lVert q \right\rVert = 1$, we calculate
	\begin{align}
	e^{\sigma_k \tau} &\leq \frac{\left<T_{\tau} P_{\Eta_{1:k}} q, P_{\Eta_{1:k}} q\right>}
	{\left<P_{\Eta_{1:k}} q, P_{\Eta_{1:k}} q\right>} \\
	&= \frac{\left<T_{\tau} P_{\Eta_{1:k}} q, P_{\Eta_{1:k}} q\right>}
	{1 - \left\lVert P_{\Eta_{1:k}^{\perp}} q \right\rVert^2} \\
	& \leq \frac{\left<P_{\Eta_{1:k}}
		T_{\tau} P_{\Eta_{1:k}} q, q\right>
		+ \left<P_{\Eta_{1:k}^{\perp}} T_{\tau} P_{\Eta_{1:k}^{\perp}} q, q\right>}
	{1 - \left\lVert P_{\Eta_{1:k}^{\perp}} P_{Q_{1:k}} \right\rVert^2} \\
	&= \frac{\left<T_{\tau} q, q\right>}
	{1 - d_2^2\left(\Eta_{1:k}, \Phi\right)}.
	\end{align}
	We conclude that
	\begin{equation}
	\left(1 - d_2^2\left(\Eta_{1:k}, \Phi\right)\right) e^{\sigma_k \tau}
	\leq \frac{\left<q, T_{\tau} q\right>}
	{\left<q, q\right>},
	\quad q \in Q_{1:k}.
	\end{equation}
	The lower bound then follows by applying the min-max principle.
\end{proof}

It remains to verify the inequality
\begin{equation}
1 \leq \frac{d_{\F}^2\left(\spa\limits_{1 \leq i \leq k} \gamma_i^{\tau},
	\spa\limits_{1 \leq i \leq k} \eta_i \right)}
{d_{\F}^2\left(\spa\limits_{1 \leq i \leq k} \eta_i, \Phi\right)}
\leq 1 + \frac{\left\lVert P_{\Phi^{\perp}}
	T_{\tau} P_{\Phi} \right\rVert_2^2}
{\left|e^{-\sigma_k \tau} - \lambda_{k+1}^{\tau}\right|^2}
\tag{\ref{eq:subspace_bound}}
\end{equation}
that appears in the statement of \cref{thm:knyazev}.
\begin{proof}[Proof of equation \eqref{eq:subspace_bound}]
	We define subspaces $\Eta_{j:k} = \spa_{j \leq i \leq k} \eta_i$ and
	$\Gamma_{j:k}^\tau = \spa_{j \leq i \leq k} \gamma_i^\tau$ for all $1 \leq j < k \leq n$.
	Then, it follows
	\begin{equation}
	1 \leq \frac{d_{\F}^2\left(\spa\limits_{1 \leq i \leq k} \gamma_i^{\tau},
		\spa\limits_{1 \leq i \leq k} \eta_i \right)}
	{d_{\F}^2\left(\spa\limits_{1 \leq i \leq k} \eta_i, \Phi\right)}
	= \frac{\left\lVert P_{\Eta_{1:k}} P_{\left(\Gamma_{1:k}^\tau\right)^\perp} \right\rVert_{\F}^2}
	{\left\lVert P_{\Eta_{1:k}} P_{\Phi^{\perp}} \right\rVert_{\F}^2} 
	\leq 1 + 
	\frac{\left\lVert P_{\Eta_{1:k}} P_{\Gamma_{k+1:n}^\tau} \right\rVert_{\F}^2}
	{\left\lVert P_{\Eta_{1:k}} P_{\Phi^{\perp}} \right\rVert_{\F}^2}.
	\end{equation}
	
	It remains to bound the distance between $\Eta_{1:k}$
	and the idealized VAC subspace $\Gamma_{k+1:n}^\tau$.
	To bound this distance, we apply the Davis-Kahan lemma
	as in the proof of \cref{thm:remarkable}.
	The spectrum of $P_{\Eta_{1:k}} \left.T_{\tau}\right|_{\Eta_{1:k}}$
	lies in
	$\left[e^{-\sigma_k \tau}, \infty\right)$,
	while the spectrum of $P_{\Gamma_{k+1:n}^{\tau}} \left.T_{\tau} \right|_{\Gamma_{k+1:n}^\tau}$ lies in
	$\left(-\infty, \lambda_{k+1}^\tau\right]$.
	Therefore, the spectral gap is at least $e^{-\sigma_k \tau} - \lambda_{k+1}^\tau$.
	It follows that
	\begin{align}
	\left(e^{-\sigma_k \tau} - \lambda_{k+1}^{\tau}\right)
	\left\lVert 
	P_{\Eta_{1:k}}
	P_{\Gamma_{k+1:n}^\tau}
	\right\rVert_{\F} 
	&\leq \left\lVert 
	P_{\Eta_{1:k}} P_{\Gamma_{k+1:n}^\tau} T_{\tau} P_{\Gamma_{k+1:n}^\tau}
	- P_{\Eta_{1:k}} 
	T_{\tau} P_{\Eta_{1:k}}
	P_{\Gamma_{k+1:n}^\tau}
	\right\rVert_{\F} \\
	&= \left\lVert 
	P_{\Eta_{1:k}} P_{\Phi} T_{\tau} 
	P_{\Gamma_{k+1:n}^\tau}
	- P_{\Eta_{1:k}} T_{\tau}
	P_{\Gamma_{k+1:n}^\tau}
	\right\rVert_{\F} \\
	&= \left\lVert
	P_{\Eta_{1:k}} P_{\Phi^{\perp}} T_{\tau} P_{\Gamma_{k+1:n}^\tau}
	\right\rVert_{\F} \\
	&\leq \left\lVert
	P_{\Eta_{1:k}} P_{\Phi^{\perp}}
	\right\rVert_{\F}
	\left\lVert P_{\Phi^{\perp}}
	T_{\tau} P_{\Phi}
	\right\rVert_{2}.
	\end{align}
	where we have used the fact that $\Gamma_{k+1:n}^\tau$ is an invariant subspace of $P_{\Phi} T_{\tau} P_{\Phi}$
	and $\Eta_{1:k}$ is an invariant subspace of $\tau$.
\end{proof}


\subsection{Sharper bounds on the lag-time-independent error}

Here we prove an elegant bound on the lag-time-independent error.

\begin{proposition}{\label{prop:orthogonalized2}}
	The lag-time-independent error satisfies
	\begin{equation}
	1 \leq \frac{d_{\F}^2\left(\spa\limits_{j \leq i \leq k} q_i, \spa\limits_{j \leq i \leq k}\eta_i \right)}
	{d_{\F}^2\left(\spa\limits_{j \leq i \leq k} \eta_i, \Phi\right)}
	\leq \frac{1}
	{1 - d_2^2\left(\spa\limits_{1 \leq i \leq j-1} \eta_i, \Phi\right)}.
	\end{equation}
\end{proposition}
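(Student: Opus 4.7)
The plan is to first characterize $\mathcal{Q}_2 := \spa_{j \leq i \leq k} q_i$ as the image of $\mathcal{E}_2 := \spa_{j \leq i \leq k} \eta_i$ under the orthogonal projection onto the restricted subspace $\Phi' := \Phi \cap \mathcal{Q}_1^\perp$, where $\mathcal{Q}_1 := \spa_{1 \leq l \leq j-1} q_l$ and $\mathcal{E}_1 := \spa_{1 \leq l \leq j-1} \eta_l$. Inspecting the Gram-Schmidt recursion in \cref{def:orthogonal}, every $q_m$ with $m \geq j$ lies in $\Phi$ and is orthogonal to $q_1, \ldots, q_{j-1}$, so $\mathcal{Q}_2 \subseteq \Phi'$; a dimension count (using the linear independence of the projections $P_\Phi \eta_i$) then identifies the span as $\mathcal{Q}_2 = P_{\Phi'} \mathcal{E}_2$. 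Since $P_{(\Phi')^\perp}\eta_i \perp \mathcal{Q}_2 \subseteq \Phi'$ for every $\eta_i \in \mathcal{E}_2$, we get $P_{\mathcal{Q}_2} \eta_i = P_{\Phi'}\eta_i$, and because $\dim \mathcal{Q}_2 = \dim \mathcal{E}_2$ the Frobenius distance may be flipped to yield $d_{\F}^2(\mathcal{Q}_2, \mathcal{E}_2) = \sum_{i=j}^k \|P_{(\Phi')^\perp}\eta_i\|^2$.

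The next step exploits the orthogonal decomposition $\Phi = \Phi' \oplus \mathcal{Q}_1$, which gives $\|P_{\Phi'}\eta_i\|^2 = \|P_\Phi \eta_i\|^2 - \|P_{\mathcal{Q}_1}\eta_i\|^2$ and hence the cornerstone identity
\begin{equation}
d_{\F}^2(\mathcal{Q}_2, \mathcal{E}_2) = d_{\F}^2(\mathcal{E}_2, \Phi) + \sum_{i=j}^k \|P_{\mathcal{Q}_1}\eta_i\|^2.
\end{equation}
The lower bound is now immediate. The upper bound reduces to the pointwise estimate
\begin{equation}
\|P_{\mathcal{Q}_1}\eta_i\|^2 \leq \frac{d_2^2(\mathcal{E}_1, \Phi)}{1 - d_2^2(\mathcal{E}_1, \Phi)} \|P_{\Phi^\perp}\eta_i\|^2, \qquad i \geq j.
\end{equation}
To prove this I would parameterize $q \in \mathcal{Q}_1 = P_\Phi \mathcal{E}_1$ as $q = P_\Phi \eta / \|P_\Phi \eta\|$ with $\eta \in \mathcal{E}_1$, use $\eta \perp \eta_i$ to rewrite $\langle P_\Phi \eta, \eta_i\rangle = -\langle P_{\Phi^\perp}\eta, P_{\Phi^\perp}\eta_i\rangle$, and combine Cauchy-Schwarz with the inequalities $\|P_{\Phi^\perp}\eta\|^2 \leq d_2^2(\mathcal{E}_1, \Phi)\|\eta\|^2$ and $\|P_\Phi \eta\|^2 \geq (1 - d_2^2(\mathcal{E}_1, \Phi))\|\eta\|^2$, which hold for every $\eta \in \mathcal{E}_1$. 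Summing over $i = j, \ldots, k$ and substituting into the cornerstone identity produces the advertised upper bound.

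The main obstacle is the structural identification $\mathcal{Q}_2 = P_{\Phi'}\mathcal{E}_2$ in the first step: the Gram-Schmidt recursion subtracts components along the previously constructed $q_l$'s rather than along $\mathcal{Q}_1$ directly, so both containments must be verified, one by induction on $m$ and the other by a dimension argument that invokes the linear-independence hypothesis on the $P_\Phi \eta_i$. All remaining manipulations are routine linear algebra on finite-dimensional subspaces, and no further subtleties are expected.
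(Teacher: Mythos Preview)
Your proposal is correct and follows essentially the same route as the paper. Both arguments rest on the decomposition $\lVert P_{\mathcal{Q}_2^\perp}\eta\rVert^2 = \lVert P_{\Phi^\perp}\eta\rVert^2 + \lVert P_{\mathcal{Q}_1}\eta\rVert^2$ for $\eta \in \mathcal{E}_2$, and both bound the second term by parameterizing $\mathcal{Q}_1 = P_\Phi\mathcal{E}_1$, using $\eta \perp \eta'$ to rewrite $\langle P_\Phi\eta',\eta\rangle = -\langle P_{\Phi^\perp}\eta', P_{\Phi^\perp}\eta\rangle$, and applying Cauchy--Schwarz together with $\lVert P_{\Phi^\perp}\eta'\rVert^2 \leq d_2^2(\mathcal{E}_1,\Phi)\lVert\eta'\rVert^2$. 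The only cosmetic difference is that the paper picks the single direction $\eta' \in \mathcal{E}_1$ with $P_\Phi\eta'$ parallel to $P_{\mathcal{Q}_1}\eta$, whereas you take a supremum over all unit $q \in \mathcal{Q}_1$; and you are more explicit about the structural identification $\mathcal{Q}_2 = P_{\Phi'}\mathcal{E}_2$, which the paper uses without comment.
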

\begin{proof}
	To verify the upper bound,
	it is enough to prove
	\begin{equation}
	\label{eq:our_goal}
	\left\lVert P_{Q_{j:k}^{\perp}} \eta \right\rVert^2
	\leq \frac{\left\lVert P_{\Phi^{\perp}} \eta \right\rVert^2}
	{1 - \left\lVert P_{\Phi^{\perp}}
		P_{\Eta_{1:j-1}}\right\rVert_2^2},
	\quad \eta \in \Eta_{j:k}.
	\end{equation}
	Moreover, observing that
	\begin{equation}
	\left\lVert P_{Q_{j:k}^{\perp}} \eta \right\rVert^2
	= \left\lVert P_{Q_{1:j-1}} \eta \right\rVert^2
	+ \left\lVert P_{\Phi^{\perp}} \eta \right\rVert^2,
	\end{equation}
	it is enough to prove
	\begin{equation}
	\label{eq:enough_to_prove}
	\left\lVert P_{Q_{1:j-1}} \eta \right\rVert^2 = \left\lVert P_{Q_{j:k}^{\perp}} \eta \right\rVert^2
	\leq \frac{\left\lVert P_{\Phi^{\perp}} \eta \right\rVert^2
		\left\lVert P_{\Phi^{\perp}}
		P_{\Eta_{1:j-1}}\right\rVert_2^2}
	{1 - \left\lVert P_{\Phi^{\perp}}
		P_{\Eta_{1:j-1}}\right\rVert_2^2},
	\quad \eta \in \Eta_{j:k}.
	\end{equation}
	
	If $P_{Q_{1:j-1}} \eta = 0$,
	then equation \eqref{eq:enough_to_prove} follows trivially.
	Therefore, we consider $\eta \in \Eta_{j:k}$ such that
	$P_{Q_{1:j-1}} \eta \neq 0$.
	Then, there is a function $\eta^{\prime} \in \Eta_{1:j-1}$ with
	\begin{equation}
	\label{eq:big_combo3}
	P_{\Phi} \eta^{\prime}
	= \frac{P_{Q_{1:j-1}} \eta}
	{\left\lVert P_{Q_{1:j-1}} \eta\right\rVert}.
	\end{equation}
	We can bound the norm of $\eta^{\prime}$ by observing
	\begin{equation}
	\left\lVert \eta^{\prime}\right\rVert^2
	= \left\lVert P_{\Phi} \eta^{\prime}\right\rVert^2
	+ \left\lVert P_{\Phi^{\perp}}
	\eta^{\prime}\right\rVert^2
	\leq 1
	+ \left\lVert P_{\Phi^{\perp}} P_{\Eta_{1:j-1}}\right\rVert_2^2 
	\left\lVert \eta^{\prime}\right\rVert^2.
	\end{equation}
	This gives the norm bound
	\begin{equation}
	\label{eq:big_combo4}
	\left\lVert \eta^{\prime}\right\rVert^2
	\leq \frac{1}
	{1 - \left\lVert P_{\Phi^{\perp}} P_{\Eta_{1:j-1}}\right\rVert_2^2 
		\left\lVert \eta^{\prime}\right\rVert^2}.
	\end{equation}
	Using the norm bound and the orthogonality of $\eta \in \Eta_{j:k}$ and $\eta^\prime \in \Eta_{1:j-1}$,
	we conclude
	\begin{align}
	\left\lVert P_{Q_{1:j-1}} \eta \right\rVert^2
	&= \left\lVert \left<P_{Q_{1:j-1}} \eta, P_{\Phi} \eta^{\prime}\right> P_{\Phi} \eta^{\prime} \right\rVert^2 \\
	&= \left<P_{Q_{1:j-1}} \eta, P_{\Phi} \eta^{\prime}\right>^2 \\
	&= \left< P_{\Phi} \eta,
	P_{\Phi} \eta^{\prime}\right>^2
	\\
	&= \left< P_{\Phi^{\perp}} \eta,
	P_{\Phi^{\perp}} \eta^{\prime}\right>^2 \\
	&\leq \left\lVert P_{\Phi^{\perp}} \eta \right\rVert^2
	\left\lVert P_{\Phi^{\perp}} \eta^{\prime} \right\rVert^2 \\
	&\leq \frac{\left\lVert P_{\Phi^{\perp}} \eta \right\rVert^2
		\left\lVert P_{\Phi^{\perp}} P_{\Eta_{1:j-1}} \right\rVert_2^2}
	{1 - \left\lVert P_{\Phi^{\perp}} P_{\Eta_{1:j-1}}\right\rVert_2^2}.
	\end{align}
\end{proof}

\end{document}